\newcommand{\mb}{\mathbb}
\newcommand{\mc}{\mathcal}
\newcommand{\tensor}{\otimes}
\newcommand{\Axis}{\mathsf{Axis}}
\DeclareMathOperator{\GL}{GL}
\newcommand\DEF[1]{\emph{#1}}
\newcommand\leftisomto{\stackrel{\sim}{\smash{\longleftarrow}\rule{0pt}{0.4ex}}}
\newcommand{\inlmatr}[1]{\begin{psmallmatrix} #1 \end{psmallmatrix}}
\newcommand{\matr}[1]{\begin{pmatrix} #1 \end{pmatrix}}
\definecolor{darkgreen}{RGB}{0,150,0}
\definecolor{myGreen}{rgb}{0.18039216 0.49803922 0.09411765}
\definecolor{oldlace}{rgb}{0.99, 0.96, 0.9}
\definecolor{moccasin}{rgb}{0.93333333 0.49019608 0.04313725} 
\definecolor{palegreen}{rgb}{0.6, 0.98, 0.6}
\definecolor{green(html/cssgreen)}{rgb}{0.0, 0.5, 0.0}
\definecolor{harlequin}{rgb}{0.25, 1.0, 0.0}
\definecolor{lasallegreen}{rgb}{0.03, 0.47, 0.19}
\definecolor{kellygreen}{rgb}{0.3, 0.73, 0.09}
\definecolor{forestgreen(web)}{rgb}{0.2, 0.8, 0.2}
\definecolor{darktangerine}{rgb}{1.0, 0.66, 0.07}
\definecolor{fashionfuchsia}{rgb}{0.96, 0.0, 0.63}
\definecolor{darkred}{rgb}{0.65, 0.0, 0.0}
\definecolor{darkRed}{rgb}{0.65, 0.0, 0.0}
\definecolor{coquelicot}{rgb}{0.9, 0.22, 0.0}
\definecolor{blue-violet}{rgb}{0.54, 0.17, 0.89}
\definecolor{airforceblue}{rgb}{0.36, 0.54, 0.66}
\definecolor{tealBlue}{rgb}{0.21, 0.46, 0.53}
\definecolor{indigoWeb}{rgb}{0.29, 0.0, 0.51}
\definecolor{CeruleanBlue}{rgb}{0.16, 0.32, 0.75}
\definecolor{darkBlue}{rgb}{0.16, 0.32, 0.75}
\definecolor{darkGrey}{rgb}{0.66, 0.66, 0.66}
\definecolor{lightGrey}{rgb}{0.33, 0.33, 0.33}
\definecolor{ballblue}{rgb}{0.392,0.513,0.725}
\definecolor{brown}{rgb}{0.45882353 0.27058824 0.00784314}
\definecolor{babypink}{rgb}{0.878, 0.345, 0.12549}
\definecolor{darkorange}{rgb}{0.894, 0.866, 0.125}
\definecolor{aquamarine}{rgb}{0.5, 1.0, 0.83}
\definecolor{blue_munsell}{rgb}{0.0, 0.5, 0.69}
\definecolor{brass}{rgb}{0.71, 0.65, 0.26}
\definecolor{brightlavender}{rgb}{0.75, 0.58, 0.89}
\DeclareMathOperator{\Hom}{Hom}
\theoremstyle{theorem}
\newtheorem{lemma}{Lemma}[section]
\newtheorem{theorem}[lemma]{Theorem}
\newtheorem*{theorem*}{Theorem}
\newtheorem{proposition}[lemma]{Proposition}
\newtheorem{corollary}[lemma]{Corollary}
\newtheorem*{corollary*}{Corollary}
\newtheorem{conjecture}[lemma]{Conjecture}
\newtheorem{question}[lemma]{Question}
\theoremstyle{definition}
\newtheorem{definition}[lemma]{Definition}
\newtheorem{example}[lemma]{Example}
\newtheorem{caveat}[lemma]{Caveat}
\newtheorem{remark}[lemma]{Remark}
\newtheorem{rmknot}[lemma]{Remark and Notation}
\theoremstyle{remark}
\title{Signature matrices of membranes}
\author[1]{Felix Lotter}
\author[2]{Leonard Schmitz}
\affil[1]{MPI MiS Leipzig, Germany}
\affil[2]{TU Berlin, Germany}
\date{}
\begin{document}
\maketitle

\begin{abstract}
The signature of a membrane is a sequence of tensors whose entries are iterated integrals. We study algebraic properties of membrane signatures, with a focus on signature matrices of polynomial and piecewise bilinear membranes. Generalizing known results for path signatures, we show that the two families of membranes admit the same set of signature matrices and we examine the corresponding affine varieties. In particular, we prove that there are no algebraic relations on signature matrices of membranes, in contrast to the path case. We complement our results by a linear time algorithm for the computation of signature tensors for piecewise bilinear membranes.
\end{abstract}
\textbf{Keywords:} two-parameter signatures,  affine algebraic varieties, matrix congruence, geometry of paths and membranes, iterated integrals
\\\\
\textbf{MSC codes:}
60L10, 14Q15, 13P25

\section{Introduction}

A \textit{path} is a continuous map $X:[0,1] \to \mb R^d$ with piecewise continuously differentiable coordinate functions. The signature of a path was introduced by Chen in his seminal work \cite{Chen1954IteratedIA} and has since become a central construction in the theory of rough paths. The \emph{second level signature} $S(X)\in\mathbb{R}^{d\times d}$ of a path $X$ is a matrix whose entries are iterated integrals
\begin{align*}
S(X)_{i,j}&:=
\int_0^1\int_0^{t_2}\dot X_{i}(t_1)\,\dot X_{j}(t_2)\;\mathrm{d}t_1\mathrm{d}t_2
\end{align*}
corresponding to $1\leq i,j\leq d$, and where $\dot X$ denotes the derivative of $X$. 
These \emph{signature matrices} were studied in \cite[Section 3]{bib:AFS2019} and satisfy several interesting properties:  their symmetric parts always have rank $1$, due to the \emph{shuffle relations} satisfied by path signatures. 
The sets of signature matrices of \textit{polynomial} and \textit{piecewise linear paths} coincide. This set is semi-algebraic and its Zariski closure can be described as a certain determinantal variety. 

\subsection*{The signature of a membrane}
Recently, the signature has been extended to membranes. 
A $d$-dimensional \textit{membrane} is a continuous map $X:[0,1]^2\rightarrow{\mathbb{R}}^d$ where the coordinate functions $X_1, X_2, \dots , X_d $ are (piecewise) continuously differentiable functions in the two arguments. In particular the differentials $\partial_{12}X_j(s,t):=\frac{\partial^2}{\partial s\partial t}X_j(s,t)$ obey the usual rules of calculus. Generalizing the path setting, one can associate a collection of tensors to $X$ via iterated integrals (see \Cref{def:signature}). The associated object was termed \textit{$\mathsf{id}$-signature} in
\cite[Definition 4.1]{diehl2024signature}, \cite[Section 4.7]{GLNO2022}. We call the second level of the \textit{$\mathsf{id}$-signature} the \textit{signature matrix} of a membrane. It is defined as the $\mathbb{R}$-valued $d\times d$ matrix $S(X)$ with entries
\begin{align}\label{eq:membrane_sig_tensor}
S(X)_{i,j}&:=
\int_0^1\int_0^1\int_0^{t_2}\int_0^{s_2}\partial_{12}X_{i}(s_1,t_1)\,\partial_{12}X_{j}(s_2,t_2)\;\mathrm{d}s_1\mathrm{d}t_1\mathrm{d}s_2\mathrm{d}t_2
\end{align}
for every $1\leq i,j\leq d$.

  Although it is easy to see that the $\mathsf{id}$-signature does not satisfy the same algebraic relations as the path signature \cite[Section 4.3]{diehl2024signature}, little is known about its algebraic relations in general. In this paper, inspired by \cite{bib:AFS2019}, we address this question for the case of signature matrices. For this we consider the family of membranes whose component functions are polynomials of order $(m,n)$. Their signature matrices \eqref{eq:membrane_sig_tensor}  form a \textit{semialgebraic subset} of $\mb R^{d\times d}$. To study the polynomials that vanish on these sets, we move to the algebraically closed field $\mb C$ and consider the associated  \emph{Zariski closure}, in the following denoted by $\mathcal{M}_{d,m,n}$. 

\begin{figure}
    \centering
\begin{subfigure}{0.45\textwidth}
        \centering
        \includegraphics{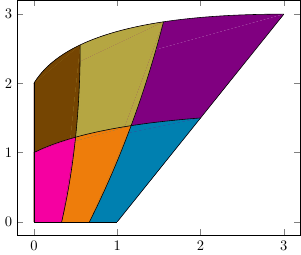}
\end{subfigure}
\hfill
\begin{subfigure}{0.45\textwidth}
        \centering
    \includegraphics{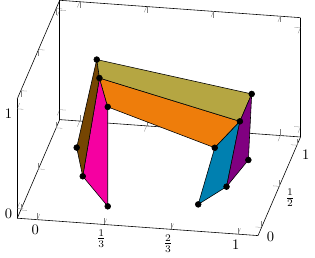}
\end{subfigure}

    \caption{A polynomial membrane in $\mb R^2$ and a piecewise bilinear membrane in $\mb R^3$. The colors correspond to images of rectangles in a uniform partitioning of $[0,1]^2$. See \Cref{ex:closed_formk2} and \Cref{fig:illustrativeMembranes} for a more detailed description.
}
\end{figure}

\subsection*{An implicitization problem}

Our strategy is to view the variety $\mathcal{M}_{d,m,n}$ as the Zariski closure of a certain matrix congruence orbit. For this we use a \textit{dictionary membrane} inspired by \cite{bib:PSS2019}, i.e., a special membrane that yields all polynomial membranes via a suitable linear transform, up to terms that vanish under the signature map.  The  \textit{moment membrane},
    \begin{align}\label{eq:into_momentMembrane}
        \mathsf{Mom}^{m,n}: [0,1]^2 &\to \mathbb R^{mn},(s,t)\mapsto(s,s^2,\dots,s^m)\otimes(t,t^2,\dots,t^n), 
    \end{align}
is a dictionary for polynomial membranes of order $(m,n)$, where
 $\otimes$ denotes the \emph{Kronecker product}. We call this membrane a product membrane since it factorizes into paths. We will see in \Cref{sec:prod_membranes} that the signature matrix of a product membrane is a Kronecker product of the signature matrices of its path factors. If for example $m=n=2$,  then $\mathsf{Mom}^{2,2}(s,t)=(st,st^2,s^2t,s^2t^2)$ has the signature matrix  
\begin{equation}\label{eq:ex_C2}S(\mathsf{Mom}^{2,2})=
\begin{psmallmatrix}
    \frac14&\frac13&\frac13&\frac49\\[4pt]
    \frac16&\frac14&\frac29&\frac13\\[4pt]
    \frac16&\frac29&\frac14&\frac13\\[4pt]
    \frac19&\frac16&\frac16&\frac14
\end{psmallmatrix}
=
\begin{psmallmatrix}
    \frac12&\frac23\\[4pt]
    \frac13&\frac12
\end{psmallmatrix}\otimes\begin{psmallmatrix}
    \frac12&\frac23\\[4pt]
    \frac13&\frac12
\end{psmallmatrix}.
\end{equation}

Since the moment membrane is a dictionary, we can express every polynomial membrane $X$ without linear terms of order $(m,n)$ as a linear transform $X=A\,\mathsf{Mom}^{m,n}$, where the matrix $A$ is chosen accordingly. Note that the membrane signature is invariant under addition of linear terms.
With \emph{equivariance} of the signature (\Cref{lmm:coord-free equivariance}) we obtain the closed formula, 
\begin{equation}\label{eq:intro_matrix_congruence}
    S(X)=A S(\mathsf{Mom}^{m,n})A^\top,
\end{equation} 
through a matrix congruence transform. 
Continuing with our example  
\eqref{eq:ex_C2} and $d=2$, we have 
\begin{equation}\label{eq:polynomialMembran_intro}X=A\,\mathsf{Mom}^{2,2}=\begin{pmatrix}
    a_{1,1}st+a_{1,2}st^2+a_{1,3}s^2t+a_{1,4}s^2t^2\\
    a_{2,1}st+a_{2,2}st^2+a_{2,3}s^2t+a_{2,4}s^2t^2
\end{pmatrix}
\end{equation}
and thus with \eqref{eq:ex_C2} and \eqref{eq:intro_matrix_congruence} the homogeneous polynomial 
\begin{align*}
S(X)_{1,1}=\frac14a_{1, 1}^2 &+ \frac12a_{1, 1}a_{1, 2} + \frac12a_{1, 1}a_{1, 3} + \frac59a_{1, 1}a_{1, 4} + \frac14a_{1, 2}^2\\ &+ \frac49a_{1, 2}a_{1, 3}+ \frac12a_{1, 2}a_{1, 4} + \frac14a_{1, 3}^2 + \frac12a_{1, 3}a_{1, 4} + \frac14a_{1, 4}^2
\end{align*}
as the first entry of our signature matrix. The remaining $3$ polynomial entries of $S(X)$  are provided in \Cref{ex:closed_formk2}. We illustrate the polynomial membrane \eqref{eq:polynomialMembran_intro} for an explicit matrix $A$ in \Cref{fig:illustrativeMembranes}.

\subsection*{Main results}

The varieties of polynomial  signature matrices $\mathcal{M}_{d,m,n}$ can be arranged in the following 
 grid of inclusions,  
\begin{equation}\label{eq:grid_of_varieties}
    \begin{matrix}
        \mathcal{M}_{d,1,1} & \subseteq & \mathcal{M}_{d,1,2} & \subseteq & \mathcal{M}_{d,1,3}& \subseteq & \dots & \subseteq &\mathcal{M}_{d,1,N}\\
        \rotatebox[origin=c]{270}{$\subseteq$} 
        &&\rotatebox[origin=c]{270}{$\subseteq$}
        &&\rotatebox[origin=c]{270}{$\subseteq$}
        &&&&
        \rotatebox[origin=c]{270}{$\subseteq$}\\
        \mathcal{M}_{d,2,1} & \subseteq & \mathcal{M}_{d,2,2} & \subseteq & \mathcal{M}_{d,2,3}& \subseteq & \dots & \subseteq &\mathcal{M}_{d,2,N}\\
        \rotatebox[origin=c]{270}{$\subseteq$} 
        &&\rotatebox[origin=c]{270}{$\subseteq$}
        &&\rotatebox[origin=c]{270}{$\subseteq$}
        &&&&
        \rotatebox[origin=c]{270}{$\subseteq$}\\
        \vdots &  & \vdots &  & \vdots&  & \ddots & &\vdots\\
        \rotatebox[origin=c]{270}{$\subseteq$} 
        &&\rotatebox[origin=c]{270}{$\subseteq$}
        &&\rotatebox[origin=c]{270}{$\subseteq$}
        &&&&
        \rotatebox[origin=c]{270}{$\subseteq$}\\
         \mathcal{M}_{d,M,1} & \subseteq & \mathcal{M}_{d,M,2} & \subseteq & \mathcal{M}_{d,M,3}& \subseteq & \dots & \subseteq &\mathcal{M}_{d,M,N}
    \end{matrix}
    \end{equation}
    where we can choose $M,N\in\mathbb{N}$ such that
    \begin{align*}
\mathcal{M}_{d,m,1}\subseteq\dots\subseteq\mathcal{M}_{d,m,N-1}\subseteq\mathcal{M}_{d,m,N}=\mathcal{M}_{d,m,N+1}&=\dots\quad
        \\
        \mathcal{M}_{d,1,n}\subseteq\dots\subseteq\mathcal{M}_{d,M-1,n}\subseteq\mathcal{M}_{d,M,n}=\mathcal{M}_{d,M+1,n}&=\dots
    \end{align*}
    for all  $m,n\in\mathbb{N}$.
  We show in \Cref{lmm:membrane vs path signature} that  $\mathcal{M}_{d,1,m} = \mathcal{M}_{d,m,1}$ agrees with the path varieties studied in \cite{bib:AFS2019}. In particular, it 
follows that $\mathcal{M}_{d,1,N}=\mathcal{M}_{d,M,1}$ is the variety parametrizing the signature matrices of all
smooth paths, called \emph{universal path variety} in \cite[Section 4.3]{bib:AFS2019}. However, in sharp contrast to the path case, there are generally no algebraic relations in our two-parameter signature matrices \eqref{eq:membrane_sig_tensor} at all. 
  
\begin{theorem*}[\ref{thm:mainResult}]
    The variety $\mc M_{d,M,N}$ in \eqref{eq:grid_of_varieties} attains the full ambient dimension, i.e. $\mathcal{M}_{d,M,N}  = \mb C^{d\times d}$. More precisely, $\mc M_{d,m,n} = \mb C^{d\times d}$ whenever $m + n > d$ with $m,n\not=1$. 
\end{theorem*}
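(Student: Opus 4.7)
The plan is to realize $\mathcal{M}_{d,m,n}$ as the Zariski closure of the image of the morphism
\[
\phi\colon\mathbb{C}^{d\times mn}\to\mathbb{C}^{d\times d},\quad A\mapsto ACA^\top,
\]
where $C:=S(\mathsf{Mom}^{m,n})=C_m\otimes C_n$ and $C_k$ is the signature matrix of the $k$-dimensional moment path (entries $(C_k)_{ij}=j/(i+j)$). This is a direct restatement of the matrix congruence formula \eqref{eq:intro_matrix_congruence}. Proving $\mathcal{M}_{d,m,n}=\mathbb{C}^{d\times d}$ then reduces to showing that $\phi$ is dominant, and by upper semi-continuity of rank it suffices to exhibit a single $A$ at which the differential $d\phi_A(B)=BCA^\top+ACB^\top$ is surjective.

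The key observation is that $d\phi_A$ decouples under the splitting $C=C_s+C_a$ into symmetric and antisymmetric parts: the symmetric part of $d\phi_A(B)$ equals $BC_sA^\top+(BC_sA^\top)^\top$, and the antisymmetric part equals $BC_aA^\top-(BC_aA^\top)^\top$. For generic $A$ the image of $B\mapsto BC_sA^\top$ consists of matrices whose rows lie in an $r_s$-dimensional subspace of $\mathbb{C}^d$, where $r_s:=\min(d,\mathrm{rank}\,C_s)$. After a basis change aligning this subspace with the first $r_s$ coordinates, a short dimension count shows the symmetric image has dimension $dr_s-\binom{r_s}{2}$, so it fills $\mathrm{Sym}(\mathbb{C}^{d\times d})$ iff $r_s=d$, i.e., $\mathrm{rank}\,C_s\geq d$. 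Analogously the antisymmetric image has dimension $dr_a-\binom{r_a+1}{2}$ with $r_a:=\min(d,\mathrm{rank}\,C_a)$, filling $\mathrm{Alt}(\mathbb{C}^{d\times d})$ iff $\mathrm{rank}\,C_a\geq d-1$.

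The proof thus reduces to verifying these two rank bounds under $m+n>d$ and $m,n\geq 2$. Writing $C_k=\tfrac12\mathbf{J}_k+A_k$ with antisymmetric $A_k$ of entries $(A_k)_{ij}=(j-i)/(2(i+j))$, Kronecker expansion gives
\[
C_s=\tfrac14\mathbf{J}_m\otimes\mathbf{J}_n+A_m\otimes A_n,\qquad C_a=\tfrac12(\mathbf{J}_m\otimes A_n+A_m\otimes\mathbf{J}_n).
\]
To analyze $\ker C_a$ I would identify $v\in\mathbb{C}^{mn}$ with a matrix $V\in\mathbb{C}^{m\times n}$ and observe that $C_a v=0$ rewrites as $\mathbf{1}_m(A_n r)^\top+(A_m q)\mathbf{1}_n^\top=0$, with $r:=V^\top\mathbf{1}_m$ and $q:=V\mathbf{1}_n$ the column- and row-sum vectors. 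Since such an expression vanishes only when $A_n r$ and $A_m q$ are scalar multiples of the all-ones vectors, and granted the auxiliary lemma that each $A_k$ has maximal antisymmetric rank $2\lfloor k/2\rfloor$ with $\mathbf{1}_k\not\perp\ker A_k$ in the odd case, a parity-wise case analysis produces $\mathrm{rank}\,C_a\geq m+n-2\geq d-1$ under $m+n>d$. A parallel argument for $C_s$, exploiting that $\mathbf{1}_m\otimes\mathbf{1}_n$ fails to lie in the column span of $A_m\otimes A_n$ unless both $m,n$ are even, gives $\mathrm{rank}\,C_s\geq(m-1)(n-1)+1\geq d$ under the same hypothesis.

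The main obstacle is the sharp kernel computation for $C_a$: the bound $\mathrm{rank}\,C_a=m+n-2$ in the same-parity case is tight, and it is precisely this that forces the strict inequality $m+n>d$. The exclusion $m,n\neq 1$ enters naturally, since $A_1=0$ would collapse both $C_s$ and $C_a$, reducing the setup to a rescaled path signature in which the classical shuffle relations reappear.
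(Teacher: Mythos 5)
Your strategy is genuinely different from the paper's and rests on an unjustified step. You correctly observe that the symmetric and skew-symmetric parts of the differential $d\phi_A(B)=BCA^\top+ACB^\top$ can be written as $BC_sA^\top+(BC_sA^\top)^\top$ and $BC_aA^\top-(BC_aA^\top)^\top$ respectively. But both expressions use the \emph{same} $B$: to show $d\phi_A$ is onto you need the single linear map $B\mapsto\bigl(BC_sA^\top+(BC_sA^\top)^\top,\ BC_aA^\top-(BC_aA^\top)^\top\bigr)$ to be surjective onto $\mathrm{Sym}\times\mathrm{Alt}$, and that does not follow from each coordinate projection being surjective separately. Your ``reduces to two rank bounds'' step silently assumes $B$ can be chosen independently for the two parts. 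This is false in general: take $C=\inlmatr{1&1\\0&1}$ with $d=mn=2$; then $\rank C_s=\rank C_a=2$, so your criterion would declare $\phi$ dominant, yet $\det(X)/\det(X^{\mathrm{sym}})$ is constant on the image of $A\mapsto ACA^\top$ and $\phi$ has $3$-dimensional image. (This is exactly the type of relation the paper records in its Example on $\mc M_{4,2,2}$.) The congruence normal form of this $C$ is $H_2(\mu)$ with $\mu\notin\{\pm1\}$, which is neither congruent to a symmetric matrix nor to a direct sum of a symmetric and a skew block — that is precisely what makes the symmetric and skew parts of the image interact.

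The same structural obstacle appears for $C_{m,n}$ itself whenever $m$ or $n$ is even: by the paper's Lemma on normal forms, those cases contain $\Gamma_2$, $\Gamma_3$ and $H_4(1)$ blocks, and e.g.\ $H_4(1)$ is not congruent to any $S\oplus K$ with $S$ symmetric and $K$ skew (its symmetric part has full rank $4$, which would force $S$ to be $4\times4$ and $K$ absent, contradicting that $H_4(1)$ is not symmetric). So for even or mixed parity your decoupling cannot be salvaged without an explicit argument showing the \emph{joint} kernel of the two projections has the right dimension. This is exactly the hard part the paper handles by induction on $m+n$ together with explicit base-case and fiber-dimension computations in \textsc{Macaulay 2}. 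For $m,n$ both odd, by contrast, the normal form of $C_{m,n}$ is $H_2(-1)^{\oplus\cdot}\oplus\Gamma_1^{\oplus\cdot}$, literally a skew block direct-summed with a symmetric block; there the map $A\mapsto A C_{m,n} A^\top$ really does decouple, and the argument reduces (as the paper also observes, via its rank theorem) to the two rank inequalities $\rank C_s\geq d$, $\rank C_a\geq d-1$, which is morally the computation you sketch. But for the other parities your proposal as written has a real gap, and the specific ranks of $C_s,C_a$ alone are not enough information to conclude.
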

The proof relies on the theory of matrix congruence orbits \cite{TERAN201144,HORN20061010}. Using this machinery, we can find lower and upper bounds for our varieties $\mc M_{d,m,n}$: 
\begin{theorem*}[{\ref{thm:rks of sk and sym}}]
Let $\mc S_{a,b}$ be the variety of $d\times d$ matrices whose symmetric part has rank $\leq a$ and whose skew-symmetric part has rank $\leq b$. Then
\begin{itemize}
    \item $\mc S_{(m-2)(n-2), m+n-2} \subseteq \mc M_{d,m,n} \subseteq \mc S_{mn,m+n-2}$ if $m,n$ are even,
    \item $\mc S_{2, n-1} \subseteq \mc M_{d,2,n} \subseteq \mc S_{2(n-1)+1, n + 1}$ if $n$ is odd,
    \item $\mc S_{(m-2)(n-1) - 1, m+n-1} \subseteq \mc M_{d,m,n} \subseteq \mc S_{m(n-1)+1, m+n-1}$ if $m\geq 4$ is even, $n$ is odd,
    \item $\mc S_{(m-1)(n-1) + 1,m+n-2} = \mc M_{d,m,n}$ if $m,n$ are odd.
\end{itemize}
\end{theorem*}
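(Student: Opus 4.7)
The plan is to exploit the matrix-congruence description \eqref{eq:intro_matrix_congruence}, which identifies $\mc M_{d,m,n}$ with the Zariski closure of the image of the polynomial map
\[
\mb C^{d\times mn}\to\mb C^{d\times d},\qquad A\mapsto A\,S(\mathsf{Mom}^{m,n})\,A^\top.
\]
I would first analyse the dictionary matrix $M:=S(\mathsf{Mom}^{m,n})$. Since $\mathsf{Mom}^{m,n}$ is a product membrane (\Cref{sec:prod_membranes}), $M=M^m\otimes M^n$ where $M^k_{ij}=j/(i+j)$ is the signature matrix of the path $t\mapsto(t,t^2,\dots,t^k)$. Splitting $M^k=M^k_s+M^k_a$, the shuffle relation gives $M^k_s=\tfrac12\mathbf 1\mathbf 1^\top$ of rank $1$, while an explicit minor computation shows that $M^k_a$ attains the maximal possible rank for a skew-symmetric matrix of size $k$: namely $k$ for even $k$ and $k-1$ for odd $k$.

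The upper bounds then follow from the fact that the congruence action commutes with the symmetric/skew decomposition: for $N=AMA^\top$ one has $N_s=AM_sA^\top$ and $N_a=AM_aA^\top$, so $\rank(N_s)\le\rank(M_s)$ and $\rank(N_a)\le\rank(M_a)$. Using the Kronecker identities
\[
M_s=M^m_s\otimes M^n_s+M^m_a\otimes M^n_a,\qquad M_a=M^m_s\otimes M^n_a+M^m_a\otimes M^n_s,
\]
I would bound $\rank(M_s)\le 1+\rank(M^m_a)\rank(M^n_a)$; case-by-case this recovers each upper bound on the symmetric part, with the cap at the ambient size $mn$ effective only in the even-even case. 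For $\rank(M_a)$ I would compare the column spaces $\mathbf 1\otimes\operatorname{im}(M^n_a)$ and $\operatorname{im}(M^m_a)\otimes\mathbf 1$: a rank-one tensor lies in both iff it is a scalar multiple of $\mathbf 1\otimes\mathbf 1$, so the intersection is one-dimensional exactly when $\mathbf 1$ belongs to both images and trivial otherwise. Combining inclusion-exclusion with the evenness of the rank of a skew-symmetric matrix then produces the four stated upper bounds.

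For the lower bounds I would appeal to the Horn--Sergeichuk canonical form for congruence classes of square matrices \cite{HORN20061010,TERAN201144}. The image of the map $A\mapsto AMA^\top$ depends on $M$ only through its congruence class, since $A(PMP^\top)A^\top=(AP)M(AP)^\top$ for $P\in\GL_{mn}$, so I would pass to a block-diagonal normal form of $M$ and construct explicit families of matrices $A\in\mb C^{d\times mn}$ whose pullbacks realise an $N$ with the ``lower-bound'' values of $\rank(N_s)$ and $\rank(N_a)$. Degenerating inside these families should sweep out a dense subset of $\mc S_{a,b}$ in each parity case, and in the odd-odd case one verifies that the upper and lower ranks coincide, forcing equality.

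The hard part will be these lower bounds: the inequalities $\rank(N_s)\le a$ and $\rank(N_a)\le b$ are separately necessary, but joint sufficiency is subtle because any choice of $A$ realising a prescribed symmetric part $AM_sA^\top$ simultaneously fixes the skew part $AM_aA^\top$. Controlling this coupling through the Horn--Sergeichuk decomposition of $M$ is the central technical point, and it is precisely the tightness of that control in the odd-odd case that produces the equality, while leaving unresolved gaps in the other three parity cases.
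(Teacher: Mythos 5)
Your upper-bound argument takes a genuinely different route from the paper. The paper first establishes a key lemma (\Cref{lmm:sig normal form}) computing the Horn--Sergeichuk canonical form for congruence of $C_{m,n} = S(\Axis^{m,n})$ — this requires passing to Jordan forms of cosquares and invoking the Brualdi theorem on Jordan forms of Kronecker products — and then reads off the ranks of the symmetric and skew-symmetric parts directly from that normal form. You instead compute the ranks of $M^{\mathsf{sym}}$ and $M^{\mathsf{sk}}$ for $M = S(\mathsf{Mom}^{m,n})$ straight from the Kronecker identities
\[
M^{\mathsf{sym}} = M^m_s\otimes M^n_s + M^m_a\otimes M^n_a,\qquad
M^{\mathsf{sk}} = M^m_s\otimes M^n_a + M^m_a\otimes M^n_s,
\]
using the rank-one structure of $M^k_s = \tfrac12\mathbf 1\mathbf 1^\top$ (from the shuffle relation) and the maximal skew rank of $M^k_a$. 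This bypasses the normal-form machinery entirely. To be watertight you must verify the rank claim for $M^k_a$ (it is implicit in the path result you cite from \cite{bib:AFS2019}), and in the even--even case your column-space intersection argument gives only $\rank(M^\mathsf{sk})\le m+n-1$, so you genuinely need the parity constraint on skew ranks to reach $m+n-2$; you correctly flag this, and the argument goes through.

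The lower bounds, however, are where your proposal has a real gap. You name the right ingredients — the canonical form of \cite{HORN20061010,TERAN201144} and degenerations within the congruence orbit — but you do not carry out the block bookkeeping, and this is precisely the nontrivial content. The paper's argument hinges on the explicit canonical form of $C_{m,n}$ computed in \Cref{lmm:sig normal form}: it identifies, case by case, a direct-sum decomposition of that canonical form into pieces that degenerate to $H_2(-1)$ blocks (contributing to the skew rank) and pieces congruent to $\Gamma_1$ (contributing to the symmetric rank), e.g.\ $\Gamma_3\oplus\Gamma_1$ degenerating to $H_2(-1)\oplus\Gamma_1^{\oplus 2}$ in the even--even case. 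Without this explicit normal form and the concrete block-level degenerations, the claim that ``degenerating should sweep out a dense subset of $\mc S_{a,b}$'' is unsupported; in particular, nothing in your sketch explains why the coupling between $A M_s A^\top$ and $A M_a A^\top$ can be controlled to hit a generic pair of prescribed ranks, nor why in the odd--odd case the bound is achieved with equality. You would essentially have to re-derive the content of \Cref{lmm:sig normal form} to close this gap.
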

In particular, it follows that our varieties are only interesting if $m$ and $n$ are small compared to $d$. From \cite{TERAN201144} we obtain explicit formulas for the dimensions of $\mathcal{M}_{d,m,n}$  whenever $mn\leq d$, generalising \cite[Theorem 3.4]{bib:AFS2019}. Note that there are $dmn \leq d^2$ parameters involved, so the dimension can be at most $dmn$.

\begin{theorem*}[\ref{thm:pw-lin matr dim}]
   For $mn \leq d$, the dimension of $\mc M_{d,m,n}$ is
    \begin{itemize}
       \item $dmn - \frac{1}{2}m^2 n^2 +  m^2 (n-1) + (m-1)n^2 - \frac 7 2 mn + 4(m+n) - 4$, if $m$ and $n$ even,
       \item $dmn- \frac{1}{2}m^2 n^2+m^2(n-1) + (m-1)n^2 - \frac 3 2 mn + m + n$, if $m$ even and $n$ odd,
       \item $dmn - \frac{1}{2}m^2 n^2 + m^2 (n-1) + (m-1)n^2 - \frac 7 2 mn + 3(m+n) - 2$, if $m$ and $n$ odd.
    \end{itemize}
\end{theorem*}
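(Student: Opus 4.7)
The plan is to realise $\mathcal{M}_{d,m,n}$ as the Zariski closure of a matrix congruence orbit in $\mathbb{C}^{d\times d}$ and then invoke the dimension formulas of \cite{TERAN201144}. Write $r:=mn$ and $S:=S(\mathsf{Mom}^{m,n})\in\mathbb{C}^{r\times r}$. By \eqref{eq:intro_matrix_congruence}, every polynomial signature matrix has the form $ASA^\top$ with $A\in\mathbb{C}^{d\times r}$. Since $r\leq d$, a generic such $A$ has full column rank and appears as the first $r$ columns of some $P\in\mathrm{GL}_d$, so that
\[ASA^\top \;=\; P\,\widetilde S\, P^\top, \qquad \widetilde S \;:=\; \begin{pmatrix} S & 0 \\ 0 & 0 \end{pmatrix}\in\mathbb{C}^{d\times d}.\]
Thus $\mathcal{M}_{d,m,n}=\overline{\mathrm{GL}_d\cdot \widetilde S}$ and $\dim\mathcal{M}_{d,m,n}=d^2-\dim\mathrm{Stab}_{\mathrm{GL}_d}(\widetilde S)$.

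\textbf{Reduction to the stabiliser of $S$.}
The path moment matrix $P_m:=S(\mathsf{Mom}^m)$ has entries $j/(i+j)$ and factors as a Cauchy matrix times a diagonal, hence is nonsingular; the same holds for $P_n$ and therefore for $S=P_m\otimes P_n$. A direct block computation on the equation for a stabiliser of $\widetilde S$ shows, using invertibility of $S$, that the lower-left block must vanish and the upper-left block must lie in $\mathrm{Stab}_{\mathrm{GL}_r}(S)$, while the upper-right and lower-right blocks stay free, giving
\[\dim\mathrm{Stab}_{\mathrm{GL}_d}(\widetilde S)\;=\;\dim\mathrm{Stab}_{\mathrm{GL}_r}(S)+r(d-r)+(d-r)^2.\]
Substituting yields the clean expression $\dim\mathcal{M}_{d,m,n}=dmn-\dim\mathrm{Stab}_{\mathrm{GL}_{mn}}(S)$, so the problem reduces to the stabiliser dimension of the single matrix $S=P_m\otimes P_n$.

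\textbf{Canonical form and case analysis.}
By \cite{TERAN201144}, $\dim\mathrm{Stab}_{\mathrm{GL}_{mn}}(S)$ can be read off the Horn–Sergeichuk canonical form of $S$ under congruence, which is in turn determined by the Jordan form of the cosquare $S^{-\top}S=(P_m^{-\top}P_m)\otimes(P_n^{-\top}P_n)$. The canonical form of $P_m$ is accessible from the path-case analysis in \cite{bib:AFS2019} (the cosquare structure being constrained by the shuffle identity $P_m+P_m^\top=\mathbf{1}\mathbf{1}^\top$), and the Kronecker product cosquare is then decomposed by standard Jordan–Kronecker rules. Summing the stabiliser contributions of the resulting $J$-type, $H$-type and $\Gamma$-type blocks yields the three listed formulas. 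The parity split reflects whether the distinguished eigenvalue $(-1)^{k+1}$ appears in the cosquare spectrum: $\Gamma$-blocks at this eigenvalue contribute differently from paired $H_{2k}(\mu)$-blocks, and this is the origin of the varying constants $4(m+n)-4$, $m+n$ and $3(m+n)-2$. A useful sanity check is the specialisation $m=1$, where both parity branches collapse to $dn-\tfrac{n(n-1)}{2}$, recovering \cite[Theorem 3.4]{bib:AFS2019}. The main obstacle is exactly this bookkeeping: identifying the precise multiplicities of Jordan blocks of the tensor product cosquare at the distinguished eigenvalues, and translating them into Horn–Sergeichuk blocks with their stabiliser contributions, is a delicate case-by-case computation rather than an abstract argument.
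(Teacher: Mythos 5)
Your overall architecture matches the paper's: reduce to the congruence orbit, use the fact (from the product membrane formula) that the core matrix is a Kronecker product of path core matrices, compute the Jordan form of the cosquare, translate to the Horn--Sergeichuk canonical form, and read off the orbit dimension from \cite{TERAN201144}. Your reformulation $\dim\mathcal M_{d,m,n}=dmn-\dim\mathrm{Stab}_{\mathrm{GL}_{mn}}(S)$ via the block stabiliser computation for $\widetilde S$ is a clean alternative to the paper's direct invocation of the codimension formula for the singular $d\times d$ matrix $\widetilde S$; it is correct and arguably more transparent. (You work with the moment core $S(\mathsf{Mom}^{m,n})$ rather than the axis core $S(\mathsf{Axis}^{m,n})$, but these are congruent, so the orbit closure and stabiliser dimension are identical.)

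However, there is a genuine gap, and you name it yourself in the final sentence: the actual canonical form of $S$ is never determined, and the ``summing the stabiliser contributions \dots\ yields the three listed formulas'' step is asserted, not shown. This is not a minor detail but the entire content of the theorem. The paper's \Cref{lmm:sig normal form} does exactly this work: it shows the cosquare of $N_u$ (for $u$ even/odd) is $J_2(-1)\oplus J_1(-1)^{\oplus u-2}$ resp. $J_1(1)\oplus J_1(-1)^{\oplus u-1}$, applies the Jordan form of a Kronecker product (\cite[Theorem 4.6]{BRUALDI198531} --- the ``standard Jordan--Kronecker rules'' you gesture at but do not cite or state), and then carries out the correspondence between Jordan blocks $J_k((-1)^{k+1})$ and $\Gamma_k$-blocks, versus paired blocks $J_k(\mu)\oplus J_k(\mu^{-1})$ and $H_{2k}(\mu)$-blocks. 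Without this, one cannot extract the block multiplicities in the three parity cases, and the closed-form polynomials in $m,n,d$ do not follow. As written, your argument establishes the reduction to a canonical-form computation but leaves that computation, and hence the claimed formulas, unverified.

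Two small imprecisions worth noting: (i) your sanity check ``$m=1$, where both parity branches collapse to $dn-\binom n2$'' conflates the parity of $m$ and $n$ --- for $m=1$ one must use the $m,n$ odd branch when $n$ is odd, and the $m$ even / $n$ odd branch with the roles of $m,n$ swapped when $n$ is even; it works out, but the phrasing suggests a misreading of which branches apply. (ii) The claim that ``the cosquare structure [is] constrained by the shuffle identity $P_m+P_m^\top=\mathbf 1\mathbf 1^\top$'' is true but not the mechanism used to get the canonical form: the paper instead recalls the explicit congruence representative $N_u$ from \cite{bib:AFS2019} and computes its cosquare directly.
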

The results are complemented by some explicit computations using the open-source computer algebra system \textsc{Macaulay 2} \cite{GraysonStillmanMacaulay2,eisenbud2001computations}, see e.g. \Cref{ex:berndspoly,ex:eliminationConcrete_dkmn}, \Cref{rem:explicitCompForMissingDims} and \Cref{ex:bensPolyWith100Terms}.
\subsection*{Piecewise bilinear membrances}
Bilinear interpolation is an easy way to fit discrete data into the framework of iterated integrals. It allows us to \emph{interpolate} arbitrary $d$-dimensional \emph{discrete data} on an $m\times n$ grid via a \emph{piecewise bilinear membrane of order} $(m,n)$; see \Cref{def:pw bilin}.  For an illustration of such an interpolation see $Y$ from \Cref{fig:illustrativeMembranes}. In \Cref{def:def_axis_membrane} we define our second important dictionary, the \textit{axis membrane} $\mathsf{Axis}^{m,n}$. It is inspired by the dictionary for piecewise linear paths from \cite{bib:PSS2019}, and it allows an analogue of \cite[Theorem 3.3]{bib:AFS2019} for membranes. 

\begin{theorem*}[\ref{thm:matrixVarEq}]
The sets of signature matrices of polynomial and piecewise bilinear membranes with order $(m,n)$ coincide.
In particular, both sets define the same algebraic variety $\mathcal{M}_{d,m,n}$.
\end{theorem*}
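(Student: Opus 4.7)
The plan is to express both sets of signature matrices as matrix congruence orbits using the respective dictionary membranes, and then identify these orbits via a single invertible change of basis. First, by the equivariance of the signature map (\Cref{lmm:coord-free equivariance}), any polynomial membrane $X$ of order $(m,n)$ can be written as $X = A \cdot \Mom^{m,n}$ modulo linear terms for some $A \in \mathbb R^{d \times mn}$, giving the closed formula $S(X) = A\, S(\Mom^{m,n})\, A^\top$ already derived in \eqref{eq:intro_matrix_congruence}. Analogously, by the dictionary property of the axis membrane from \Cref{def:def_axis_membrane}, any piecewise bilinear membrane $Y$ of order $(m,n)$ can be written as $Y = B \cdot \Axis^{m,n}$ modulo linear terms, and hence $S(Y) = B\, S(\Axis^{m,n})\, B^\top$. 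The theorem therefore reduces to the equality of the two congruence orbits $\{A\, S(\Mom^{m,n})\, A^\top : A \in \mathbb R^{d \times mn}\}$ and $\{B\, S(\Axis^{m,n})\, B^\top : B \in \mathbb R^{d \times mn}\}$; equality of the Zariski closures $\mathcal M_{d,m,n}$ is then automatic.

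The key observation is that both dictionary membranes factor as Kronecker products of path analogues. For the moment membrane this is built into its definition \eqref{eq:into_momentMembrane}; for the axis membrane the factorization is part of \Cref{def:def_axis_membrane}. By the product membrane lemma from \Cref{sec:prod_membranes},
\begin{equation*}
S(\Mom^{m,n}) = S(\Mom^m) \tensor S(\Mom^n), \qquad S(\Axis^{m,n}) = S(\Axis^m) \tensor S(\Axis^n),
\end{equation*}
where the right-hand factors are the signatures of the corresponding moment resp.\ axis paths in the one-parameter setting. The path case \cite[Theorem 3.3]{bib:AFS2019} then produces invertible matrices $T_1 \in \GL_m(\mathbb R)$ and $T_2 \in \GL_n(\mathbb R)$ realizing the congruences $S(\Axis^m) = T_1 S(\Mom^m) T_1^\top$ and $S(\Axis^n) = T_2 S(\Mom^n) T_2^\top$. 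Setting $T := T_1 \tensor T_2 \in \GL_{mn}(\mathbb R)$ and applying the mixed-product identity $(T_1 \tensor T_2)(S_1 \tensor S_2)(T_1 \tensor T_2)^\top = (T_1 S_1 T_1^\top) \tensor (T_2 S_2 T_2^\top)$ yields $S(\Axis^{m,n}) = T\, S(\Mom^{m,n})\, T^\top$. Substituting $B = A T^{-1}$ in either direction establishes the equality of the two congruence orbits, and hence of the two sets of signature matrices.

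The step that I expect to require the most care is the extraction of the invertible path-level matrices $T_1, T_2$ from the path result. The statement in \cite[Theorem 3.3]{bib:AFS2019} is formulated as an equality of sets of signature matrices of paths in $\mathbb R^d$, whereas what we actually need is an invertible $m \times m$ matrix witnessing a congruence between the two dictionary signatures themselves. Such a matrix should be obtainable either through an explicit change-of-basis formula between moment and axis paths up to linear terms, in the spirit of \cite{bib:PSS2019}, or by observing that for $d \geq m$ the equality of the rectangular congruence orbits $\{A S A^\top : A \in \mathbb R^{d\times m}\}$ and $\{A S' A^\top : A \in \mathbb R^{d\times m}\}$ already forces $S$ and $S'$ to lie in a common $\GL_m$-orbit, via the standard matrix congruence theory used elsewhere in the paper. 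Once this path-level fact is in place, the rest of the argument is a purely formal computation in the Kronecker calculus.
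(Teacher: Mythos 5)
Your proposal is correct and follows essentially the same route as the paper: reduce to congruence orbits via the two dictionaries, factor both core matrices as Kronecker products of path-level cores (\Cref{cor:coordinate_product_membrane_kron}), and transport the path-level congruence from \cite[Theorem 3.3]{bib:AFS2019} through the mixed-product identity. The concern you flag at the end is resolved exactly as you hypothesize: the paper cites the \emph{proof} of \cite[Theorem 3.3]{bib:AFS2019}, which explicitly constructs an invertible $H_\ell$ with $S(\mathsf{Mom}^\ell) = H_\ell\, S(\mathsf{Axis}^\ell)\, H_\ell^\top$, so no appeal to orbit-equality arguments is needed.
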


A naive computation of signature matrices via congruence operation results only in a quadratic time algorithm with respect to the number of interpolation points; see \Cref{thm:complx}. In particular, there is no analogue of Chen's identity for the id-signature \cite{GLNO2022,diehl2024signature}. However, using a cumulated approach inspired by \cite{DEFT22,DS22} we can drop this cost to linear complexity. 
\begin{corollary*}[\ref{cor:complexitySigMatOfBilMembrane}]
    The signature matrix of a piecewise bilinear membrane in $\mb R^d$ of order $(m,n)$ is computable in $\mathcal{O}(d^2 m n)$.
\end{corollary*}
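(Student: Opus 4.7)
The plan is to sidestep the quadratic pairwise interaction of patches that appears in a direct congruence-based evaluation by first collapsing the inner double integral in \eqref{eq:membrane_sig_tensor} via the fundamental theorem of calculus. A direct computation gives
\begin{equation*}
\int_0^{t_2}\!\int_0^{s_2}\partial_{12}X_i(s_1, t_1)\,\mathrm{d}s_1\,\mathrm{d}t_1
\;=\; F_i(s_2, t_2),
\qquad F_i(s,t) := X_i(s,t) - X_i(s,0) - X_i(0,t) + X_i(0,0),
\end{equation*}
which yields the reformulation
\begin{equation*}
S(X)_{i,j} \;=\; \int_0^1\!\int_0^1 F_i(s,t)\,\partial_{12}X_j(s,t)\,\mathrm{d}s\,\mathrm{d}t.
\end{equation*}

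Next I would restrict the integration to the $\Theta(mn)$ patches $P_{k,l} = [u_{k-1},u_k]\times[v_{l-1},v_l]$ of the grid underlying $X$. On each patch, bilinearity of $X_j$ forces $\partial_{12}X_j$ to be constant, and $(\text{area of }P_{k,l})$ times this constant is exactly the mixed finite difference $X_j(u_k,v_l) - X_j(u_{k-1},v_l) - X_j(u_k,v_{l-1}) + X_j(u_{k-1},v_{l-1})$. Moreover $F_i$ is again bilinear on $P_{k,l}$, because the boundary traces $X_i(\cdot,0)$ and $X_i(0,\cdot)$ are linear on the corresponding boundary segment. Hence $\int_{P_{k,l}} F_i\,\mathrm{d}s\,\mathrm{d}t$ equals the area of $P_{k,l}$ times the arithmetic mean of the four corner values of $F_i$, and the entire patch contribution to $S(X)_{i,j}$ reduces to a constant-size arithmetic expression in corner data.

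Finally, I would pre-tabulate the grid values $F_i(u_k, v_l)$ for all $i$ in total cost $\mathcal{O}(d\,mn)$, each being a four-term difference of already stored coordinate values of $X$. Summing $mn$ constant-time patch contributions for each of the $d^2$ entries of $S(X)$ then costs $\mathcal{O}(d^2 mn)$ arithmetic operations, matching the claimed bound.

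The only substantive step is the integral reduction in the first paragraph; everything afterwards is bookkeeping. Once the cumulative quantity $F_i$ is available, the double summation over ordered pairs of patches that produces the quadratic cost of the congruence-based approach of \Cref{thm:complx} collapses to a single sweep over patches, in the cumulative-sum spirit of \cite{DEFT22, DS22}.
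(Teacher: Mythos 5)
Your proof is correct, and the core idea — replacing the quadratic pairwise-patch interaction with a cumulative integral $F_i$, then sweeping once over patches — is exactly the cumulative-sum philosophy that the paper uses. However, the paper does not prove \Cref{cor:complexitySigMatOfBilMembrane} directly: it derives it by setting $k=2$ in the more general \Cref{thm:complexitySigOfBilMembrane}, whose proof builds a recursion $f_{wi}(u,v) = \int_0^v\int_0^u f_w\,\partial_{12}X_i\,\mathrm{d}s\,\mathrm{d}t$ and maintains the full piecewise polynomial $f_w(u,v)$ on every grid cell at every recursion level, decomposing each update into the four sub-integrals $\mathrm{Bl}, \mathrm U, \mathrm R, \mathrm{Ur}$. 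Your argument sidesteps all of that: since for $k=2$ only the single value $f_{ij}(1,1)$ is needed and the inner cumulative integral $f_i = F_i$ has the closed form $X_i(s,t)-X_i(s,0)-X_i(0,t)+X_i(0,0)$ by the fundamental theorem of calculus, the outer integral reduces to a single pass over the $mn$ patches where $F_i$ is biaffine and $\partial_{12}X_j$ is constant, each contributing a constant-time expression in the four corner values. This buys a self-contained, more elementary proof of the matrix case with an explicit formula (area $\times$ mean of corners $\times$ mixed finite difference); what it gives up is generality — the recursive bookkeeping in the paper's proof is what makes the $\mathcal{O}(k^3 mn)$ bound for arbitrary tensor levels possible. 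One small terminological caveat: $F_i$ is biaffine rather than bilinear on each $P_{k,l}$, but this is the convention the paper itself adopts in \Cref{def:pw bilin}, and the "area times mean of the four corner values" quadrature identity holds for biaffine functions, so the step is sound.
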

We can generalize this idea for the $k$-th level of the $\mathsf{id}$-signature tensor; see  \Cref{thm:complexitySigOfBilMembrane}. This yields a linear two-parameter integration algorithm for the signatures of images.\par
The code accompanying this paper is available at 
\begin{center}
    \url{https://github.com/felixlotter/signature-matrices-of-membranes}.
\end{center}

\subsection*{Related work}
The collection of iterated integrals of a path has been used as a tool for classification in various settings since Chen's seminal 1954 work \cite{Chen1954IteratedIA}. 
After 
pioneering work by Lyons and collaborators on the use of signatures as features for data streams in the early 2010s \cite{Gyu13,handw13}, there has been a surge in the number of applications.

Since \cite{bib:AFS2019,bib:PSS2019,GALUPPI2019282} established the initial connection between stochastic analysis and algebraic geometry, 
the signature has also been applied in various related fields, including the theory of tensor decompositions \cite{AGOSS2025,FLS24,galuppi2024ranksymmetriessignaturetensors},  clustering tasks \cite{clausel2024barycenterfreenilpotentlie,mignot:hal-04392568}, tropical time series analysis \cite{DEFT22,diehl2023fruitsfeatureextractionusing} 
and the geometry of paths \cite{Diehl2019,10.1007/978-3-031-38271-0_45,preiß2024algebraicgeometrypathsiteratedintegrals}. 

Recently, the signature has been extended to two parameters, generalizing from paths to membranes: the \emph{mapping space signature} \cite{GLNO2022} provides a principled two-parameter extension of Chen’s iterated integrals, based on its original use in topology, whereas the \emph{two-parameter sums signature} \cite{DS22} characterizes equivalence up to warping in two directions. These two approaches have recently been combined in \cite{diehl2024signature} using mixed partial derivatives.  
Due to the lack of a two-parameter version of Chen's identity in all the above references, recent works \cite{CDEFT2024, LO2023, lee2024} use higher category theory to find an appropriate generalization of a group that is rich enough to represent the horizontal and vertical concatenation of membranes.

\paragraph{Outline.}
In section \Cref{subsec:coordinateFreeIdSig} we recall the notion of iterated-integrals signatures for paths and membranes.
\Cref{sec:prod_membranes} introduces the notion of a product membrane. For arbitrary $k$, the $k$-th level signature of these membranes can be computed from path signatures, which will allow us to use theory from the path setting. We define the moment membrane and the canonical axis membrane and include algorithmic considerations of bilinear interpolations and linearly computable signature entries in \Cref{sec:subsectionBilinear}.
Finally, we return to the matrix case $k=2$ in \Cref{sec:sig_mat} where we prove our main results on signature matrix varieties. For open problems and future work see \Cref{sec:outlook}.

\section{Iterated integrals}\label{subsec:coordinateFreeIdSig}
Let $V := \mathbb R^d$ and let $T(V^*)$ denote the tensor algebra $\bigoplus_{k \in \mathbb N_0} (V^*)^{\tensor k}$. The iterated-integrals signature $\sigma(X)$ of a $d$-dimen\-sional path $X:[0,1] \to V$ is an element of the tensor series algebra $T((V^*))\cong \Hom(T(V), \mb R)$, defined by
\begin{align}\label{eq:one_param_sig}
    \sigma(X): T(V^*) &\to \mb R, \nonumber\\
    e_{i_1}^* \tensor \dots \tensor e_{i_k}^* &\mapsto \int_{\Delta_k} \mathrm{d}X_{i_1} (t_1) \dots \mathrm{d}X_{i_k}(t_k),
\end{align}
where $\Delta_k$ is the simplex $0 \leq t_1 \leq \dots \leq t_k \leq 1$. The signature matrix of $X$ is $S_{ij}(X) := \sigma(X)(e_i^* \tensor e_j^*)$. It is common to write the tensor $e^*_{i_1}\tensor \dots \tensor e^*_{i_k}$ as a word $\mathtt{i}_1\dots \mathtt{i}_k$, identifying the tensor algebra $T(V^*)$ with the free associative algebra $\mathbb R \langle \mathtt{1},\ldots,\mathtt{d} \rangle$ over the letters $\texttt{1}$ to $\texttt{d}$. 
However, the tensor algebra viewpoint allows us to consider the signature for paths (and later membranes) in any $\mb R$-vector space $V$, without choosing a basis, which will be useful in the following. Let us first recall some examples in the (classical, one-parameter) path setting. 

\begin{example}\label{example_one_prameter_signatures}
\begin{enumerate}
    \item For fixed $u\in\mathbb{R}^d$, the $k$-th signature tensor of the \DEF{linear path} $X: [0,1]\rightarrow\mathbb{R}^d,t\mapsto ut$ is given by \begin{equation}\label{eq:sig_lin_path}
        \sigma(X)(e_{i_1}^*\otimes\dots\otimes e_{i_k}^*) =\frac{u_{i_1}\dots u_{i_k}}{k!}
    \end{equation}
    where $1\leq i_1,\dots,i_k\leq d$.
    \item Let  $\mathsf{Axis}^d: [0,1] \to \mb R^d$ be the \DEF{canonical axis path} 
    from  \cite[Example 2.1]{bib:AFS2019} or \cite[eq. (8)]{bib:PSS2019}  of order $d$, i.e., the unique piecewise linear path with $d$ segments whose graph interpolates the $d+1$ equidistant support points $(\frac{i}{d}, \sum_{1\leq j\leq i}e_j)$ with $0 \leq i \leq d$. This path is illustrated in \Cref{fig:axis3_path} for $d=3$. For all $d$, its signature matrix is the upper triangular matrix
    \begin{equation}
        S_{i_1,i_2}(\mathsf{Axis}^d)=\begin{cases}
            1&\text{if }i_1<i_2\\
            \frac12&\text{if }i_1=i_2\\
            0&\text{if }i_1>i_2. 
        \end{cases}
    \end{equation}
    \label{eq:signatureMatrixAxisPath}
   \item  We denote by $\mathsf{Mom}^d: [0,1] \to \mb R^d,t \mapsto (t, \dots, t^d)$ the \textit{moment path} of degree $d$. We have the closed formula 
   \begin{equation}\label{eq:momPathSignature}\sigma(\mathsf{Mom}^d)(e_{i_1}^*\otimes\dots\otimes e_{i_k}^*)=\frac{i_2i_3\dots i_k}{(i_1+i_2)(i_1+i_2+i_3)\dots (i_1+\dots+i_k)} 
   \end{equation} 
 for all $i_1,\dots,i_k$ due to  \cite[Example 2.2]{bib:AFS2019}. 
   \end{enumerate}
\end{example}

\begin{figure}
    \centering
    \begin{subfigure}{0.24\textwidth}
        \centering
\includegraphics{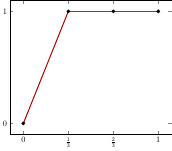}
\caption*{$\mathsf{graph}(\mathsf{Axis}^{3}_{1})$}
\end{subfigure}
\hfill
        \begin{subfigure}{0.24\textwidth}
        \centering
\includegraphics{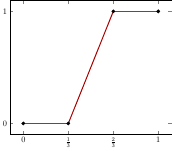}
\caption*{$\mathsf{graph}(\mathsf{Axis}^{3}_{2})$}
\end{subfigure}
\hfill
\begin{subfigure}{0.24\textwidth}
        \centering
\includegraphics{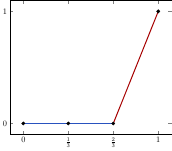}
\caption*{$\mathsf{graph}(\mathsf{Axis}^{3}_{3})$}
\end{subfigure}
\hfill
\begin{subfigure}{0.24\textwidth}
        \centering
\includegraphics{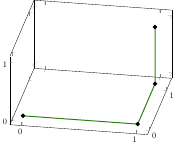}
\caption*{$\mathsf{Im}(\mathsf{Axis}^{3}$)}
\end{subfigure}

    \caption{The $3$ graphs of the coordinate functions $\mathsf{Axis}^3_i$ with the non-constant part marked in red, and the entire image of the $3$-dimensional path $\mathsf{Axis}^3$ on the right. Compare also \Cref{fig:axis32} for the correspondending axis membrane.}
    \label{fig:axis3_path}
\end{figure}

The path signature \eqref{eq:one_param_sig} can be extended to \textit{two parameters} in multiple ways. We will study the $\mathsf{id}$-signature as defined in \cite[Definition 4.1]{diehl2024signature}. We comment on its various modifications in \Cref{rem:sig_versions}. 

Let $V$ be a finite dimensional $\mathbb R$-vector space. A \textit{membrane} in $V$ is a Lipschitz continuous map $X: [0,1]^2 \to V$.

\begin{definition}[{$\mathsf{id}$-signature}]\label{def:signature}
Let $V$ be a finite dimensional vector space and $X$ a membrane in $V$. Let $\partial_{12}:=\frac {\partial^2} {\partial x_1 \partial x_2}$ be the linear partial differential operator. Then, since both the integral and $\partial_{12}$ are linear operators, the following linear form $\sigma(X)\in T(V^*)^*$ is well-defined:
\begin{align*}
    \sigma(X): T(V^*) &\to \mb R, \\
    \alpha_1 \tensor \dots \tensor \alpha_k &\mapsto \int_{(\mathbf{s},\mathbf{t}) \in \Delta^2_k} \partial_{12}(\alpha_1 \circ X) (s_1,t_1) \dots \partial_{12}(\alpha_k\circ X)(s_k,t_k) \,\mathrm{d}\mathbf{s} \mathrm{d}\mathbf{t}
\end{align*}
where $\mathrm{d}\mathbf{s}=\mathrm{d}s_{1}\dots \mathrm{d}s_{k}$, $\mathrm{d}\mathbf{t}=\mathrm{d}t_{1}\dots \mathrm{d}t_{k}$ and $\Delta_k^2=\Delta_k\times\Delta_k$. We call $\sigma(X)$ the \textit{$\mathsf{id}$-signature} of $X$. The \DEF{$k$-th level} signature $\sigma^{(k)}(X)$ is the restriction of $\sigma(X)$ along $(V^*)^{\tensor k} \to T(V^*)$, in particular it is naturally an element of $ V^{\tensor k} \cong ((V^*)^{\tensor k})^*$.
\end{definition}

\begin{remark}\label{rem:sig_versions}
\begin{enumerate}
        \item\label{rem:sig_versions2} When `replacing' our square increments $\partial_{12}$ with \emph{Jacobian minors}  we obtain the \emph{$\mathsf{id}$-signature} introduced in \cite[Section 4.7]{GLNO2022}.
    \item\label{rem:sig_versions3} For membranes and standard basis $e_1^*,\dots,e_d^*$ of $V^*=(\mathbb{R}^d)^*$, we obtain the \emph{$\mathsf{id}$-signature} according to \cite{diehl2024signature}. Its level $k$ component can be stored in the $d\times \dots \times d$ array, 
    \begin{align*}\sigma^{(k)}_{i_1,\dots, i_k}(X) &:=\sigma(e_{i_1}^* \tensor \dots \tensor e_{i_k}^*)\\
    &=\int\limits_{\substack{0\leq s_1\leq\dots\leq s_k\leq1\\0\leq t_1\leq\dots\leq t_k\leq1}}\partial_{12}X_{i_1}(s_1,t_1)\dots \partial_{12}X_{i_k}(s_k,t_k)\,\mathrm{d}s_{1}\mathrm{d}t_{1} \dots \mathrm{d}s_{k}\mathrm{d}t_{k}. 
    \end{align*}
    In particular, for level $k=2$, we obtain our signature matrix $S(X)=\sigma^{(2)}(X)$ from the introduction, \eqref{eq:membrane_sig_tensor}.
    \item When `replacing' integrals by sums, and $\partial_{12}$ by its discrete counterpart, we obtain the two-parameter \emph{sums signature} \cite{DS22}, evaluated at all \emph{diagonal matrix compositions}. For this compare also \cite[Example 4.16]{diehl2024signature}. 
\end{enumerate}
\end{remark}

\begin{remark}\label{rem:reduced membrane}
    It is clear from the definition that the signatures of any two membranes $X,Y:[0,1]^2\to V$ agree if $\partial_{12}(\alpha\circ (X-Y))=0$ for all $\alpha \in V^*$. In particular, if $X:[0,1]^2 \to V$ is a membrane, we can always find a membrane $X_\mathrm{red}$ which vanishes on $[0,1] \times \{0\} \cup \{0\} \times [0,1]$ and has the same signature as $X$, by letting $X_\mathrm{red}(s,t) := X(s,t) - X(0,t) - X(s,0) + X(0,0)$.
    See also  \Cref{lem:obdaZeroOnBound} and \Cref{fig:axis32} for an illustration of piecewise bilinear membranes.  
\end{remark}

We provide some first examples for membranes. 

\begin{example}\label{ex:sig_bilin_membrane}
\begin{enumerate}
    \item\label{ex:sig_bilin_membrane1}
    For fixed $u\in\mathbb{R}^d$, the $k$-th signature tensor of the $d$-dimen\-sional \DEF{bilinear membrane} $X: [0,1]^2\rightarrow\mathbb{R}^d,(s,t)\mapsto ust$ is given by 
    \begin{equation}\label{eq:sig_bilin_membrane}
        \sigma^{(k)}_{i_1,\dots,i_k}(X)=\frac{u_{i_1}\dots u_{i_k}}{(k!)^2}
    \end{equation}
    where $1\leq i_1,\dots,i_k\leq d$. 
This formula follows from \Cref{cor:scaling path membrane} or can be verified by an immediate computation.
    \item\label{ex:sig_bilin_membrane2} We recall the $2$-dimensional polynomial membrane $X=A\,\mathsf{Mom}^{2,2}$ from the introduction,  \eqref{eq:polynomialMembran_intro}. 

With integration over the differentials of the two-parameter moment coordinates, 
$$\int_0^1\int_0^1\,\mathrm{d}s\mathrm{d}t
=\int_0^1\int_0^12t\,\mathrm{d}s\mathrm{d}t
=\int_0^1\int_0^12s\,\mathrm{d}s\mathrm{d}t
=\int_0^1\int_0^14st\,\mathrm{d}s\mathrm{d}t
=1,$$
we obtain, together with linearity, the level $1$ signature, 
$$\sigma^{(1)}(X)=\begin{pmatrix}
a_{1,1}+a_{1,2}+a_{1,3}+a_{1,4}\\
a_{2,1}+a_{2,2}+a_{2,3}+a_{2,4}
\end{pmatrix}\in\mathbb{R}^2.$$
In \Cref{ex:closed_formk2} and \ref{ex:closed_form_k3} we provide closed formulas for its level $2$ and $3$ signature tensors, respectively. 
    \end{enumerate}
\end{example}

As in the previous example, we can use linearity of the integral to compute level $k$ signatures of transformed functions by corresponding actions on tensors. 

\begin{lemma}[Equivariance]\label{lmm:coord-free equivariance}
    Let $V, \ W$ be finite dimensional $\mb R$-vector spaces, $X: [0,1]^2 \to V$ a membrane and $A: V \to W$ a linear map. Then $\sigma(AX) = \sigma(X) \circ T(A^*)$ where $T(A^*)$ denotes the map $T(W^*) \to T(V^*)$ induced by diagonal action of $A^*$ on tensors.
\end{lemma}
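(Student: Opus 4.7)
The plan is to verify the identity directly from the definition in \Cref{def:signature}, working level by level. Both sides of $\sigma(AX) = \sigma(X) \circ T(A^*)$ are linear maps $T(W^*) \to \mathbb{R}$, and since $T(A^*)$ is defined as the diagonal action on tensors, it suffices (by linearity) to test equality on pure tensors $\alpha_1 \otimes \dots \otimes \alpha_k$ with $\alpha_j \in W^*$.

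First I would unravel the left-hand side using the definition:
\[
\sigma(AX)(\alpha_1 \otimes \dots \otimes \alpha_k) \;=\; \int_{(\mathbf{s},\mathbf{t})\in \Delta_k^2} \prod_{j=1}^k \partial_{12}(\alpha_j \circ AX)(s_j,t_j)\, \mathrm{d}\mathbf{s}\,\mathrm{d}\mathbf{t}.
\]
The key point is the elementary identity $\alpha_j \circ (AX) = (\alpha_j \circ A) \circ X = (A^*\alpha_j) \circ X$, which holds pointwise for linear $A$ and any membrane $X$. Substituting this into each factor of the integrand and pulling $\partial_{12}$ through (which is fine as $\partial_{12}$ is $\mathbb{R}$-linear, cf.\ the wording of \Cref{def:signature}) gives
\[
\partial_{12}(\alpha_j \circ AX)(s_j,t_j) \;=\; \partial_{12}((A^*\alpha_j) \circ X)(s_j,t_j).
\]

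Next I would recognize the resulting integral as $\sigma(X)$ evaluated at the pure tensor $A^*\alpha_1 \otimes \dots \otimes A^*\alpha_k$, which by definition of the diagonal action equals $T(A^*)(\alpha_1 \otimes \dots \otimes \alpha_k)$. Chaining these identifications yields
\[
\sigma(AX)(\alpha_1 \otimes \dots \otimes \alpha_k) \;=\; \sigma(X)\bigl(T(A^*)(\alpha_1 \otimes \dots \otimes \alpha_k)\bigr),
\]
which is exactly the claim on pure tensors.

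There is really no obstacle here beyond bookkeeping: the statement is essentially a restatement of the functoriality of the pullback combined with the coordinate-free setup. The only thing to be careful about is the distinction between $A^*$ acting on covectors and $T(A^*)$ acting on the tensor algebra, and confirming that the diagonal action on level $k$ does give the tensor product of pullbacks — this is precisely the usual definition of the induced algebra map $T(A^*): T(W^*) \to T(V^*)$.
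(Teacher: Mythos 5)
Your proof is correct and takes essentially the same approach as the paper: both argue that the identity $\alpha_j \circ (AX) = (A^*\alpha_j)\circ X$ lets one rewrite the integrand, and then read off the claim from the definition of $\sigma$. You merely spell out the integral and the linearity of $\partial_{12}$ in more detail than the paper does.
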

\begin{proof}
    This follows immediately from the definition, since for $w=w_1 \tensor \dots \tensor w_k$ (where $w_i \in W^*$) we have $w_i \circ (A \circ X) = (w_i \circ A) \circ X = A^* (w_i) \circ X$.
\end{proof}

In particular, \Cref{lmm:coord-free equivariance} implies that $\sigma^{(k)}(AX) = A^{\tensor k}(\sigma^{(k)}(X))$ for all $k$. If $V=\mathbb{R}^d$ we can write this in the \emph{Tucker format} $\sigma^{(k)}(AX) = [\![\sigma^{(k)}(X); A, \dots, A]\!]$. Explicitly, if $k=2$ and $S(X)$ denotes the signature matrix, we have $$S(AX)=AS(X)A^\top,$$
    as was already mentioned in the introduction \eqref{eq:intro_matrix_congruence}. 
The equivariance of signatures is a key property used in \cite{bib:PSS2019}, allowing the computation of signature tensors for various families of paths from a small collection of \textit{core tensors}. These are given by signature tensors of special paths called \textit{dictionaries}.
In the following section we derive dictionaries for membranes from such dictionaries for paths.

\section{Product membranes}\label{sec:prod_membranes}
Some membranes arise from combining paths: given a path $[0,1]\to V$ and another path $[0,1] \to W$ we can consider the Cartesian product $[0,1]^2 \to V \times W$. Composing this with any suitable function $\phi: V\times W \to Z$ into another vector space $Z$ yields a membrane. The first goal of this section is to show that if $\phi$ is \emph{bilinear}, then we can express the signature of the resulting membrane in terms of the signatures of the defining paths. This is particularly important for us, as we will see that both polynomial and piecewise bilinear membranes can be obtained in this way. Recalling equivariance of the signature (\Cref{lmm:coord-free equivariance}) and that any bilinear map $V \times W \to Z$ factors over a linear map $V \tensor W \to Z$, it suffices to understand the signature of the following type of membrane:

\begin{definition}\label{def:product_membranes} Let $V$ and $W$ be finite dimensional vector spaces. 
    The \DEF{product membrane} of two paths $X: [0,1] \to V$ and $Y: [0,1] \to W$ is the membrane in the tensor product $V \tensor W$ induced by the Cartesian product: $$X \boxtimes Y: [0,1]^2 \to V \times W \to V \tensor W.$$In other words, $X \boxtimes Y(s,t) := X(s)\tensor Y(t)$ for all $(s,t)\in[0,1]^2$. 
\end{definition}

\begin{example}
\begin{enumerate}
    \item 
    Consider the path $X$ mapping $s \mapsto (s,s^2)$ and the path $Y$ mapping $t \mapsto (e^t,\log(1+t))$. Writing elements of $\mb R^2 \tensor \mb R^2$ as matrices, the product membrane $X \boxtimes Y$ is given by
    $$(s,t) \mapsto \matr{se^t & s \log(1+t) \\
                            s^2e^t & s^2 \log(1+t)}\in\mb{R}^{2\times 2}.$$
        \item The
       \emph{moment membrane of order  $(m,n)$} in the introduction \eqref{eq:into_momentMembrane}  is the product 
    \begin{align}\label{def:moment_membrane}
        \mathsf{Mom}^{m,n}=\mathsf{Mom}^m \boxtimes \mathsf{Mom}^n: [0,1]^2 &\to \mb R^{m} \tensor \mb R^n, 
    \end{align}
    where the moment path $\mathsf{Mom}^m$ is according to   \Cref{example_one_prameter_signatures}. 
    Note that we sometimes omit the last vectorization, i.e., we identify $\mathbb{R}^{m\times n}$ with $\mathbb{R}^{mn}$ whenever the context allows this. For this compare also \Cref{cor:coordinate_product_membrane_kron}. 
    Note that every $d$-dimensional  polynomial membrane $X$ of order $(m,n)$ and with terms of order $\geq (1,1)$ is of the form $X=A\circ\mathsf{Mom}^{m,n}$, where $A$ is a linear map, e.g. our running \Cref{ex:sig_bilin_membrane} with $m=n=d=2$. 
\end{enumerate}
\end{example}

\subsection{Factorizations into path signatures}\label{subsec:productsOfPaths}

We state our main result of this section, \Cref{prop:product membranes}, showing that the signature is \emph{multiplicative} with respect to products of paths (\Cref{def:product_membranes}). In \Cref{cor:coordinate_product_membrane_kron}, we rephrase this for the matrix case  $k=2$, with explicit vectorizations and the use of the Kronecker matrix product. We illustrate these results in Examples \ref{ex:closed_form_k3} and \ref{ex:closed_formk2},  respectively.

\begin{proposition}\label{prop:product membranes}
Let $X \boxtimes Y: [0,1]^2 \to V \tensor W$ be a product membrane. Then, as linear maps,
$$\sigma(X \boxtimes Y) = \sigma(X) \tensor \sigma(Y)$$
in the following sense: the map $\sigma(X \boxtimes Y)$ factors as
$$T((V \tensor W)^*) \leftisomto T(V^* \tensor W^*) \to T(V^*) \tensor T(W^*) \to \mb R,$$
where the second map is induced by the universal property of the tensor algebra, and the last map is given by $\sigma(X) \tensor \sigma(Y)$.
More explicitly, for all $k \geq 1$, $\alpha_i \in V^*$, $\beta_i \in W^*$ we have
\begin{align*}
    \left<\sigma(X \boxtimes Y),(\alpha_1\tensor \beta_1)\tensor \dots \tensor (\alpha_k\tensor \beta_k)\right> &= \\
    \left<\sigma(X), \alpha_1 \tensor \dots \tensor \alpha_k\right> &\cdot \left<\sigma(Y),\beta_1 \tensor \dots \tensor \beta_k\right>.
\end{align*}
\end{proposition}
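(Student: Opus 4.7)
The statement splits into two tasks: first, showing the abstract factorization of $\sigma(X \boxtimes Y)$ through $T(V^*) \tensor T(W^*)$; and second, the explicit identity on elementary tensors. Since the first claim boils down to saying that a certain linear functional on $T((V\tensor W)^*)$ is determined by its values on elementary tensors of the form $(\alpha_1 \tensor \beta_1) \tensor \dots \tensor (\alpha_k \tensor \beta_k)$ (which span $T((V\tensor W)^*)$ via the canonical isomorphism $(V\tensor W)^* \cong V^* \tensor W^*$, together with the universal property of the tensor algebra applied to the algebra $T(V^*) \tensor T(W^*)$ with componentwise product), the plan is to focus on the explicit identity and then observe that the factorization follows formally.

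So first I unwind the definitions. For each $i$, the composition $(\alpha_i \tensor \beta_i) \circ (X \boxtimes Y)$ sends $(s,t)$ to $(\alpha_i \tensor \beta_i)(X(s) \tensor Y(t)) = (\alpha_i \circ X)(s)\cdot (\beta_i \circ Y)(t)$, a product of a function of $s$ alone and a function of $t$ alone. Consequently
\[\partial_{12}\bigl((\alpha_i \tensor \beta_i) \circ (X \boxtimes Y)\bigr)(s_i,t_i) = \tfrac{d}{ds}(\alpha_i \circ X)(s_i) \cdot \tfrac{d}{dt}(\beta_i \circ Y)(t_i).\]
Plugging this into \Cref{def:signature} and using that scalars commute, the integrand over $\Delta_k^2 = \Delta_k \times \Delta_k$ separates as $\bigl(\prod_i \tfrac{d}{ds}(\alpha_i\circ X)(s_i)\bigr)\bigl(\prod_i \tfrac{d}{dt}(\beta_i\circ Y)(t_i)\bigr)$, where the first factor depends only on $\mathbf s$ and the second only on $\mathbf t$.

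Now Fubini applies: the integral over $\Delta_k^2$ factors as a product of two integrals over $\Delta_k$, one in the $\mathbf s$-variables and one in the $\mathbf t$-variables. By \eqref{eq:one_param_sig} (extended by linearity from basis covectors to arbitrary $\alpha_i \in V^*$, $\beta_i \in W^*$), these two integrals are precisely $\langle \sigma(X), \alpha_1 \tensor \dots \tensor \alpha_k\rangle$ and $\langle \sigma(Y), \beta_1 \tensor \dots \tensor \beta_k\rangle$, yielding the displayed identity. Having established this on elementary tensors, the abstract factorization claim follows: the assignment $(\alpha_1 \tensor \dots \tensor \alpha_k) \tensor (\beta_1 \tensor \dots \tensor \beta_k) \mapsto \langle \sigma(X), \alpha_\bullet\rangle \langle \sigma(Y), \beta_\bullet\rangle$ extends to a linear functional $\sigma(X) \tensor \sigma(Y)$ on $T(V^*) \tensor T(W^*)$, and the natural map $T(V^* \tensor W^*) \to T(V^*) \tensor T(W^*)$ is precisely the algebra homomorphism furnished by the universal property of $T(V^* \tensor W^*)$ applied to the inclusion $V^* \tensor W^* \hookrightarrow T(V^*) \tensor T(W^*)$. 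No step is a serious obstacle; the only thing requiring care is bookkeeping the canonical isomorphism $(V \tensor W)^* \cong V^* \tensor W^*$ and keeping track of which tensor product lives in which space.
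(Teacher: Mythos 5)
Your proof is correct and takes essentially the same approach as the paper: both compute $\partial_{12}\bigl((\alpha_i\tensor\beta_i)\circ(X\boxtimes Y)\bigr)$ as a product of single-variable derivatives and then use that the domain of integration $\Delta_k^2=\Delta_k\times\Delta_k$ (with measure $\mathrm{d}\mathbf{s}\,\mathrm{d}\mathbf{t}$) factorizes, so the iterated integral splits into the two path-signature integrals. Your writeup is merely a bit more explicit about the algebraic bookkeeping (the spanning set and the universal-property argument for the factorization), which the paper leaves implicit.
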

\begin{caveat}
   Note that the tensor product of linear forms $\sigma(X): T(V^*)\to \mb R$, $\sigma(Y): T(V^*)\to \mb R$ is not the tensor product in the tensor series space $T(V^*)^*$; the formula above should not be confused with Chen's formula for the signature of a concatenation of paths.
\end{caveat}
Before proving \Cref{prop:product membranes}, we provide examples and discuss several implications.
As a first application, we compute the signature of product membranes where one path is $1$-dimensional, explaining for instance our example \eqref{eq:sig_bilin_membrane} using path signatures. 
\begin{corollary}\label{cor:scaling path membrane}
    Let $X:[0,1] \to \mb R$ be a $1$-dimensional path, and  $Y:[0,1] \to W$ be arbitrary. Then $\sigma(X \boxtimes Y)$ is a map $T(\mb R \tensor W^*) \to \mb R$, so we can view it as a map $T(W^*) \to \mb R$. Then it sends a simple tensor $w = w_1 \tensor \dots \tensor w_\ell$ to 
    $$\sigma(X)(e_1^{\tensor\ell}) \cdot \sigma(Y)(w) = \frac{(X(1)-X(0))^{\ell}}{\ell!}\cdot \sigma(Y)(w).$$
    In particular, $\sigma^{(k)}(X \boxtimes Y) = \frac{1}{k!} (X(1)-X(0))^{k} \cdot \sigma^{(k)}(Y)$, so  $\sigma^{(k)}(X\boxtimes Y)$ and $\sigma^{(k)}(Y)$ differ only by some scalar for any $k$.
\end{corollary}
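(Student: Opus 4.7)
\bigskip

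The plan is to apply Proposition \ref{prop:product membranes} directly, exploiting the fact that $V = \mathbb R$ is one-dimensional so that $V^* \cong \mathbb R$ is spanned by the single dual basis vector $e_1^*$. Under the canonical identifications $V \tensor W \cong W$ and $\mathbb R \tensor W^* \cong W^*$, an arbitrary simple tensor $w = w_1 \tensor \dots \tensor w_\ell$ in $T(W^*)$ corresponds to $(e_1^* \tensor w_1) \tensor \dots \tensor (e_1^* \tensor w_\ell)$ in $T((V \tensor W)^*)$. Pairing $\sigma(X \boxtimes Y)$ with this tensor, Proposition \ref{prop:product membranes} immediately yields the factorization
\[
\langle \sigma(X \boxtimes Y), w_1 \tensor \dots \tensor w_\ell\rangle = \sigma(X)(e_1^{\tensor\ell}) \cdot \sigma(Y)(w_1 \tensor \dots \tensor w_\ell).
\]

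The remaining computation is the closed-form evaluation $\sigma(X)(e_1^{\tensor\ell}) = (X(1)-X(0))^\ell/\ell!$ for a one-dimensional path. I would argue this by direct integration: since $\dot X(t_1)\cdots \dot X(t_\ell)$ is symmetric in its arguments and the $\ell!$ permuted copies of the simplex $\Delta_\ell$ tile $[0,1]^\ell$ up to a null set, we have
\[
\sigma(X)(e_1^{\tensor\ell}) = \int_{\Delta_\ell} \dot X(t_1)\cdots \dot X(t_\ell)\,\mathrm d t_1 \cdots \mathrm d t_\ell = \frac{1}{\ell!} \left(\int_0^1 \dot X(t)\,\mathrm d t\right)^{\!\ell} = \frac{(X(1)-X(0))^\ell}{\ell!}.
\]
Alternatively, one could invoke the shuffle relation $(e_1^*)^{\shuffle \ell} = \ell!\, (e_1^*)^{\tensor \ell}$ and the fact that $\sigma(X)(e_1^*) = X(1)-X(0)$; since in the paper the general shuffle identity for path signatures is classical, either route is acceptable.

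Finally, to conclude the tensor form $\sigma^{(k)}(X\boxtimes Y) = \frac{1}{k!}(X(1)-X(0))^k \cdot \sigma^{(k)}(Y)$, I would note that a linear functional on $(W^*)^{\tensor k}$ is determined by its values on simple tensors, and then use the canonical isomorphism $((W^*)^{\tensor k})^* \cong W^{\tensor k}$ to translate the scalar identity proved above into the stated equality of tensors. The only subtle point here is keeping the identification $V \tensor W \cong W$ explicit, but since $V = \mathbb R$ this is essentially bookkeeping rather than a genuine obstacle; no step in the argument requires more than the multiplicativity of Proposition \ref{prop:product membranes} together with the one-dimensional computation.
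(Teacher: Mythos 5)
Your proposal is correct and takes the same route as the paper: unravel the factorization from Proposition~\ref{prop:product membranes} under the identification $\mb R \tensor W^* \cong W^*$, then evaluate $\sigma(X)(e_1^{\tensor\ell})$ for the one-dimensional path by symmetrizing the iterated integral over the simplex. The paper's proof is just the one-line instruction to unravel the factorization; you have simply made explicit the bookkeeping and the elementary one-dimensional computation that the paper leaves to the reader.
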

\begin{proof}
    Unravel the factorization of $\sigma(X \boxtimes Y)$ in \Cref{prop:product membranes}.
\end{proof}

If $X:[0,1]^2\to V \tensor W$ is a product membrane, it can be convenient to use a vectorization $\nu: V \tensor W \cong \mb R^{\dim(V)\cdot\dim(W)}$, for example to obtain an actual signature matrix. Using \Cref{lmm:coord-free equivariance}, we see that $\sigma(\nu X)$ then factors as
$$T(\mb R^{\dim(V)\cdot\dim(W)}) \to T((V \tensor W)^*) \to T(V^*) \tensor T(W^*) \to \mb R$$
where the first map is $T(\nu^*)$.\par
As a special case, we obtain compatibility of the signature with the Kronecker product of paths:
\begin{corollary}\label{cor:coordinate_product_membrane_kron}
    Let $X:[0,1]\to \mb R^m$ and $Y:[0,1] \to \mb R^n$ be two paths and let $\mathsf{kron}(X,Y):[0,1]^2 \to \mb R^{mn}$ denote the membrane that maps $(s,t)$ to the Kronecker product of the column vectors $X(s)$ and $Y(t)$. Then we have the following identity between signature matrices:
    \begin{align*}S(\mathsf{kron}(X, Y)) 
&= S(X)\tensor S(Y)
\end{align*}
where $\tensor$ denotes the Kronecker product of matrices.
\end{corollary}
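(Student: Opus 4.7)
The plan is to reduce this corollary to Proposition~\ref{prop:product membranes} at level $k=2$, after carefully identifying the membrane $\mathsf{kron}(X,Y)$ with the product membrane $X\boxtimes Y$ via a vectorization.

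First I would fix notation: let $V=\mathbb R^m$, $W=\mathbb R^n$, and let $\nu: V\tensor W \to \mathbb R^{mn}$ be the linear isomorphism sending $e_i \tensor e_j$ to $e_{(i-1)n+j}$. By the very definition of the Kronecker product of column vectors, we have $\nu(X(s)\tensor Y(t)) = \mathsf{kron}(X,Y)(s,t)$, so $\mathsf{kron}(X,Y) = \nu \circ (X\boxtimes Y)$. By equivariance (\Cref{lmm:coord-free equivariance}), at level $2$ this gives
\[
    \sigma^{(2)}(\mathsf{kron}(X,Y)) = (\nu \tensor \nu)\bigl(\sigma^{(2)}(X\boxtimes Y)\bigr),
\]
so computing $S(\mathsf{kron}(X,Y))$ amounts to reading off the entries of $\sigma^{(2)}(X\boxtimes Y) \in (V\tensor W)^{\tensor 2}$ against the basis $\{e_i^*\tensor e_j^*\}$ of $(V\tensor W)^*$.

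Next, I would apply \Cref{prop:product membranes} in the case $k=2$ with $\alpha_p = e_{i_p}^*$ and $\beta_p = e_{j_p}^*$. Under the natural identification $T(V^*\tensor W^*) \to T(V^*)\tensor T(W^*)$ used there, the tensor $(e_{i_1}^*\tensor e_{j_1}^*)\tensor (e_{i_2}^*\tensor e_{j_2}^*)$ is routed to $(e_{i_1}^*\tensor e_{i_2}^*) \tensor (e_{j_1}^*\tensor e_{j_2}^*)$, and the proposition then yields
\[
    \bigl\langle \sigma(X\boxtimes Y),(e_{i_1}^*\tensor e_{j_1}^*)\tensor (e_{i_2}^*\tensor e_{j_2}^*)\bigr\rangle = S(X)_{i_1,i_2}\cdot S(Y)_{j_1,j_2}.
\]

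Finally, I would compare this to the entries of the Kronecker product of matrices. Under $\nu$, the pair of indices $(i,j)$ corresponds to the single index $(i-1)n+j$, and by the defining property of the Kronecker product, $(S(X)\tensor S(Y))_{(i_1-1)n+j_1,\,(i_2-1)n+j_2} = S(X)_{i_1,i_2}\,S(Y)_{j_1,j_2}$. Matching this with the previous display shows that both sides of the claimed identity have the same $((i_1,j_1),(i_2,j_2))$-entries for all choices of indices, proving the corollary. The only substantive step is the bookkeeping in the identification $T(V^*\tensor W^*)\cong T(V^*)\tensor T(W^*)$ used in \Cref{prop:product membranes}, but since \Cref{prop:product membranes} is already assumed, no real obstacle remains; the corollary is essentially a translation of the proposition into matrix language.
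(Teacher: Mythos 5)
Your proof is correct and follows essentially the same route as the paper: write $\mathsf{kron}(X,Y)=\nu\circ(X\boxtimes Y)$ for the same vectorization $\nu(e_i\tensor e_j)=e_{(i-1)n+j}$, apply \Cref{prop:product membranes} at level $k=2$, and match the resulting entries against the defining formula for the Kronecker product. The only difference from the paper's write-up is cosmetic index bookkeeping (you track pairs $(i_p,j_p)$ directly rather than unpacking a single index $i$ into $i_1,i_2$), which makes the argument, if anything, a bit easier to read.
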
\label{xmpl:vect membranes}

\begin{example}\label{ex:closed_formk2}
We return to our running \Cref{ex:sig_bilin_membrane} with $X=A\,\mathsf{Mom}^{2,2}$
and matrix $A=(a_{i,j})_{1\leq i\leq2,1\leq j \leq 4}$. 
With 
\Cref{cor:coordinate_product_membrane_kron} and \eqref{eq:momPathSignature} we get the core matrix of the moment membrane, 
$$C:=\sigma^{(2)}(\mathsf{Mom}^{2,2})
=\sigma^{(2)}(\mathsf{Mom}^{2})\otimes
\sigma^{(2)}(\mathsf{Mom}^{2})
=\begin{pmatrix}
    \frac12&\frac23\\[4pt]
    \frac13&\frac12
\end{pmatrix}\otimes\begin{pmatrix}
    \frac12&\frac23\\[4pt]
    \frac13&\frac12
\end{pmatrix}$$
as it is presented in the introduction, \eqref{eq:ex_C2}. 
With   
$S(X)=\sigma^{(2)}(X)=A C A^\top$ we provide the three missing homogeneous entries, 
\begin{align*}
    S(X)_{2,1}
    &=\frac{1}{4} a_{1, 1} a_{2, 1} + \frac{1}{6} a_{1, 1}a_{2, 2} + \frac{1}{6} a_{1, 1} a_{2, 3} + \frac{1}{9} a_{1, 1} a_{2, 4} + \frac{1}{3} a_{2, 1} a_{1, 2} + \frac{1}{3} a_{2, 1} a_{1, 3} \\ &\quad+ \frac{4}{9} a_{2, 1} a_{1, 4}+ \frac{1}{4} a_{1, 2} a_{2, 2} + \frac{2}{9} a_{1, 2} a_{2, 3} + \frac{1}{6} a_{1, 2} a_{2, 4} + \frac{2}{9} a_{2, 2} a_{1, 3} + \frac{1}{3} a_{2, 2} a_{1, 4}  \\ &\quad + \frac{1}{4} a_{1, 3} a_{2, 3}+ \frac{1}{6} a_{1, 3} a_{2, 4}+ \frac{1}{3} a_{2, 3} a_{1, 4} + \frac{1}{4} a_{1, 4} a_{2, 4}\\
    S(X)_{1,2}&=
    \frac{1}{4} a_{1, 1} a_{2, 1} + \frac{1}{3} a_{1, 1} a_{2, 2} + \frac{1}{3} a_{1, 1} a_{2, 3} + \frac{4}{9} a_{1, 1} a_{2, 4} + \frac{1}{6} a_{2, 1} a_{1, 2} + \frac{1}{6} a_{2, 1} a_{1, 3} \\ &\quad+ \frac{1}{9} a_{2, 1} a_{1, 4} + \frac{1}{4} a_{1, 2} a_{2, 2} + \frac{2}{9} a_{1, 2} a_{2, 3} + \frac{1}{3} a_{1, 2} a_{2, 4} + \frac{2}{9} a_{2, 2} a_{1, 3} + \frac{1}{6} a_{2, 2} a_{1, 4} \\ &\quad+ \frac{1}{4} a_{1, 3} a_{2, 3} + \frac{1}{3} a_{1, 3} a_{2, 4} + \frac{1}{6} a_{2, 3} a_{1, 4} + \frac{1}{4} a_{1, 4} a_{2, 4}
\intertext{\begin{normalsize}and\end{normalsize}}
    S(X)_{2,2}&=\frac{1}{4} a_{2, 1}^{2} + \frac{1}{2} a_{2, 1} a_{2, 2} + \frac{1}{2} a_{2, 1} a_{2, 3} + \frac{5}{9} a_{2, 1} a_{2, 4} + \frac{1}{4} a_{2, 2}^{2} + \frac{4}{9} a_{2, 2} a_{2, 3} + \frac{1}{2} a_{2, 2} a_{2, 4} \\&\quad+ \frac{1}{4} a_{2, 3}^{2} + \frac{1}{2} a_{2, 3} a_{2, 4} + \frac{1}{4} a_{2, 4}^{2}
\end{align*}
using equivariance,  \Cref{lmm:coord-free equivariance}. The polynomial $S(X)_{1,1}$ is already provided in the introduction. \par
We illustrate the membrane $X$ in the first row of \Cref{fig:illustrativeMembranes}, for the explicit choice  $A=\begin{psmallmatrix}1&-1&1&1\\1&1&0&-1\end{psmallmatrix}$. In particular, we provide the graphs of the coordinate functions $\mathsf{graph}(X_1)$ and $\mathsf{graph}(X_2)$, and a plot of the image $\mathsf{Im}(X)$ in $\mb R^2$. Here, the colors correspond to the image of rectangles in a uniform partition of $[0,1]^2$, e.g. $X([0,\frac13]\times [0,\frac12])$ in pink, $X([\frac13,\frac23]\times [0,\frac12])$ in orange, or $X([\frac23,1]\times [\frac12,1])$ in violet. Note that $X$ maps three sides of the square $[0,1]^2$ to the origin. The right-most picture shows an ``untangled" version of the membrane $X$ which has the same signature, cf. \Cref{rem:reduced membrane}.
\end{example}

\begin{figure}[h]
    \centering
    \begin{subfigure}{0.24\textwidth}
        \centering
        \includegraphics{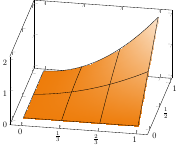}
\caption*{$\mathsf{graph}(X_1)$}
\end{subfigure}
\hfill
        \begin{subfigure}{0.24\textwidth}
        \centering
\includegraphics{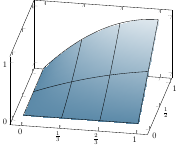}
\caption*{$\mathsf{graph}(X_2)$}
\end{subfigure}
\hfill
\begin{subfigure}{0.24\textwidth}
        \centering
    \includegraphics{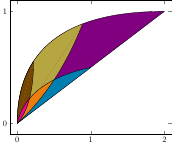}
\caption*{$\mathsf{Im}(X)$}
\end{subfigure}
\hfill
\begin{subfigure}{0.24\textwidth}
        \centering
         \includegraphics{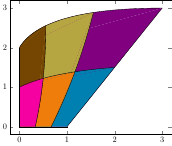}

\caption*{$\mathsf{Im}(X+\begin{psmallmatrix}
    s\\2t
\end{psmallmatrix})$}
\end{subfigure}
\vspace{0.2cm}
            \centering
            \vspace{0.2cm}
    \begin{subfigure}{0.24\textwidth}
        \centering
        \includegraphics{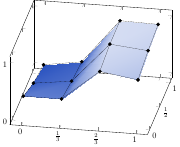}
\caption*{$\mathsf{graph}(Y_1)$}
\end{subfigure}
\hfill
        \begin{subfigure}{0.24\textwidth}
        \centering
         \includegraphics{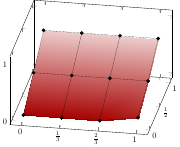}
\caption*{$\mathsf{graph}(Y_2)$}
\end{subfigure}
\hfill
\begin{subfigure}{0.24\textwidth}
        \centering
         \includegraphics{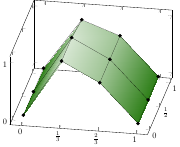}
\caption*{$\mathsf{graph}(Y_3)$}
\end{subfigure}
\hfill
\begin{subfigure}{0.24\textwidth}
        \centering
        \includegraphics{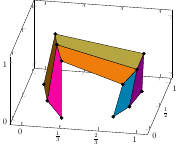}
\caption*{$\mathsf{Im}(Y)$}
\end{subfigure}

    \caption{Plots of the polynomial membrane $X$ in $\mb R^2$ from \Cref{ex:closed_formk2} and a piecewise bilinear membrane $Y$ interpolating $12$ points in $\mathbb{R}^3$, represented by black dots. Let $X_1,X_2, Y_1, Y_2$ and $Y_3$ denote the coordinate functions of $X$ and $Y$, respectively.}
    \label{fig:illustrativeMembranes}
\end{figure}

\begin{proof}[Proof of \Cref{cor:coordinate_product_membrane_kron}]
     Choosing the vectorization $\nu: \mb R^m \tensor \mb R^n \to \mb R^{mn},e_i \tensor e_j \mapsto e_{n(i-1) + j}$, we see that $\mathsf{kron}(X,Y) = \nu \circ X \boxtimes Y$. Then we have
    \begin{align*}
    & \left<\sigma(\nu (X \boxtimes Y)),e_i \tensor e_j \right> = \left<\sigma(X \boxtimes Y),\nu^*(e_i) \tensor \nu^*(e_j)\right>\\
    &= \left<\sigma(X \boxtimes Y),(e_{i_1} \tensor e_{i_2}) \tensor (e_{j_1} \tensor e_{j_2}) \right> =
    \left<\sigma(X), e_{i_1} \tensor e_{j_1} \right> \cdot \left<\sigma(Y),e_{i_2}  \tensor e_{j_2}\right>
\end{align*}
where $i_1 =\lceil \frac {i} n \rceil$, $j_1 = \lceil \frac j n \rceil$, $i_2 = (i - 1 \text{ mod } n) + 1$, $j_2 = (j - 1 \text{ mod } n) + 1$. In terms of signature matrices, this amounts to the identity
\begin{equation*}
    \sigma^{(2)}(\nu (X \boxtimes Y)) = \sigma^{(2)}(X) \tensor \sigma^{(2)}(Y).
\end{equation*}
 
\end{proof}

As another consequence of \Cref{prop:product membranes} and equivariance we obtain the formula from \cite[Example 4.2]{diehl2024signature} for the Hadamard product of two paths:

\begin{corollary}\label{cor:evaluatedProductMembranes}
Let $\mu: \mb R^d \tensor \mb R^d \to \mb R^d$ denote the componentwise multiplication map. 
If $V=W=\mb R^d$ then 
\begin{equation}\label{eq:product_formula}
        \sigma(\mu(X\boxtimes Y)) = \sigma(X) \cdot \sigma(Y)
    \end{equation}
    as morphisms $T(\mb R^d) \to \mb R$. In particular, $S(X \odot Y) = S(X) \odot S(Y)$ where $\odot$ denotes the Hadamard product of vectors resp.\ matrices.
\end{corollary}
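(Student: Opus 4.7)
The plan is to obtain the identity by simply unwinding the definitions: first use equivariance (\Cref{lmm:coord-free equivariance}) to replace $\mu$ by its dual, then invoke the product formula (\Cref{prop:product membranes}) to split the result into a product of path signatures.

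More concretely, I would begin by describing the dual of the componentwise multiplication. Since $\mu : \mathbb R^d \otimes \mathbb R^d \to \mathbb R^d$ sends $e_i \otimes e_j$ to $\delta_{ij} e_i$, its transpose $\mu^* : (\mathbb R^d)^* \to (\mathbb R^d)^* \otimes (\mathbb R^d)^*$ is the ``copy'' map $e_i^* \mapsto e_i^* \otimes e_i^*$. Interpreting the claimed equality $\sigma(X)\cdot\sigma(Y)$ as the pointwise product of the two linear forms on the canonical basis $\{e_{i_1}^*\otimes\cdots\otimes e_{i_k}^*\}$ of $T(\mathbb R^d)^*$, the statement amounts to
\begin{equation*}
\bigl\langle \sigma(\mu(X\boxtimes Y)),\, e_{i_1}^*\otimes\cdots\otimes e_{i_k}^*\bigr\rangle
\;=\;
\bigl\langle \sigma(X),\,e_{i_1}^*\otimes\cdots\otimes e_{i_k}^*\bigr\rangle\cdot \bigl\langle \sigma(Y),\,e_{i_1}^*\otimes\cdots\otimes e_{i_k}^*\bigr\rangle
\end{equation*}
for all $k\geq 1$ and all indices $i_1,\dots,i_k$.

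Next I would apply \Cref{lmm:coord-free equivariance} to the linear map $\mu$, which gives $\sigma(\mu(X\boxtimes Y)) = \sigma(X\boxtimes Y) \circ T(\mu^*)$. Substituting the formula for $\mu^*$ converts the left-hand side above into
$\langle \sigma(X\boxtimes Y),\,(e_{i_1}^*\otimes e_{i_1}^*)\otimes\cdots\otimes (e_{i_k}^*\otimes e_{i_k}^*)\rangle$. Applying \Cref{prop:product membranes} with $\alpha_j = \beta_j = e_{i_j}^*$ then produces exactly the product of path-signature entries on the right-hand side.

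For the Hadamard-product statement on signature matrices, I would specialize to $k=2$: by definition $S(X\odot Y) = \sigma^{(2)}(\mu(X\boxtimes Y))$, and the equation just proven with $(i_1,i_2)=(i,j)$ reads $S(X\odot Y)_{ij} = S(X)_{ij}\,S(Y)_{ij}$, which is precisely the entrywise (Hadamard) product. There is no real obstacle here; the only point that requires care is keeping track of the canonical isomorphism $(V\otimes W)^* \cong V^*\otimes W^*$ used implicitly when applying both \Cref{lmm:coord-free equivariance} and \Cref{prop:product membranes} simultaneously, but both ingredients have already been set up to make this transparent.
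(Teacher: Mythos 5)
Your proof is correct and follows exactly the same route as the paper: the paper's one-line proof merely notes that $\mu^*$ sends $e_i^*$ to $e_i^*\otimes e_i^*$ and implicitly invokes equivariance (\Cref{lmm:coord-free equivariance}) and the product formula (\Cref{prop:product membranes}), which are precisely the two steps you spell out in detail.
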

\begin{proof}
    The dual map $\mu^*$ sends $e_i$ to $e_i \tensor e_i$.
\end{proof}

We close this section with our omitted proof, followed by an example for higher tensor levels, and an implication for the computational complexity of the signature of a polynomial membrane.  

\begin{proof}[Proof of \Cref{prop:product membranes}]
The isomorphism $T(V^* \tensor W^*)\to T((V\tensor W)^*)$ displayed in the statement is induced by the map that sends $\alpha \tensor \beta$ to the form $\alpha \cdot \beta$ on $V\tensor W$. Now, by definition, $\sigma(X)$ maps a simple tensor $\alpha_1 \cdot \beta_1 \tensor \dots \tensor \alpha_k \cdot \beta_k$ to
$$\int_{\Delta} \int_{\Delta} \prod_i \partial_{12} z_i(s_i,t_i) \,\mathrm{d}s_1 \dots \mathrm{d}s_k \mathrm{d}t_1 \dots \mathrm{d}t_k$$
where $z_i(s,t) := (\alpha_i \circ X)(s)\cdot (\beta_i \circ Y)(t)$. Here we reordered the differential forms which is allowed because we are integrating over the product simplex $\Delta^2$. In particular, we have $(\partial_{12} z_i)(s,t) = \partial(\alpha_i \circ X)(s) \cdot \partial(\beta_i \circ Y)(t)$. By definition,
$$\langle\sigma(X^{(1)}),\alpha_1 \tensor \dots \tensor \alpha_k\rangle = \int_\Delta \prod_i \partial(\alpha_i \circ X)(s_i) \,\mathrm{d}s_1 \dots \mathrm{d}s_k$$
and similarly for $Y$; thus reordering the integral reveals the claimed equality.
\end{proof}


In order to provide an explicit computation for higher tensor levels $k\geq 3$, we introduce some further notation. 
\begin{rmknot}\label{rem:notation_tuple_of_words}
If $X:[0,1] \to \mb R^m$ and $Y:[0,1]\to \mb R^n$ are two paths, then by \Cref{prop:product membranes} the signature of $X \boxtimes Y$ is a linear form on $T((\mb R^m \tensor \mb R^n)^*)$ which can be identified with the free associative algebra $\mb R\langle(i,j)\rangle$ over tuples of letters $i\in[m]$ and $j\in[n]$. Thus, in the following we also write $(i_1,j_1)\dots(i_k,j_k)$ for the tensor $(e_{i_1}\tensor e_{j_1})^* \tensor \dots \tensor (e_{i_k}\tensor e_{j_k})^*$.
\end{rmknot}

\begin{example}\label{ex:closed_form_k3}
  With \Cref{prop:product membranes}, our identification in   \Cref{rem:notation_tuple_of_words}, and the closed formula for signatures of polynomial paths in \Cref{example_one_prameter_signatures}, we obtain a closed formula for the level $k$ tensor of the moment membrane $\mathsf{Mom}^{m,n}$ from \eqref{def:moment_membrane},\begin{align*}
  \langle\sigma(\mathsf{Mom}^{m,n})&,(i_1,j_1)\dots(i_k,j_k)\rangle\\&=\frac{i_2\dots i_k}{(i_1+i_2)\dots (i_1+\dots+i_k)} \frac{j_2\dots j_k}{(j_1+j_2)\dots (j_1+\dots+j_k)}
   \end{align*}
    where $ i_1,\dots,i_k\in[m]$ and $j_1,\dots,j_k\in[ n]$. 
For instance for the $2$-dimensional polynomial membrane from our running example \Cref{ex:sig_bilin_membrane}, we obtain the level $k=3$ core tensor, here denoted by $C^{(3)}$,
$$
\begin{scriptsize}
\left(\begin{array}{cccc|cccc|cccc|cccc}
 \frac1{36}  & \frac1{36} &  \frac1{36}   &\frac1{36}
 &\frac1{24} & \frac2{45} &\frac1{24} & \frac2{45}
 &\frac1{24} & \frac1{24} & \frac2{45}  &\frac2{45}
 &\frac1{16}  & \frac1{15}  & \frac1{15}&   \frac{16}{225}
 \\[4pt]
 \frac1{72}  & \frac1{60} & \frac1{72}   &\frac1{60}
 &\frac1{45}&  \frac1{36} & \frac1{45}  &\frac1{36}
 &\frac1{48}& \frac1{40}&  \frac1{45} & \frac2{75}
 &\frac1{30} & \frac1{24} & \frac{8}{225} & \frac{2}{45}
 \\[4pt]
 \frac1{72}  & \frac1{72}   &\frac1{60}   &\frac1{60}
 &\frac1{48}  &\frac1{45} & \frac1{40}  &\frac2{75}
 &\frac1{45} & \frac1{45} & \frac1{36} &\frac1{36}
 &\frac1{30} &  \frac8{225} & \frac1{24}&   \frac2{45}
 \\[4pt]
 \frac1{144} & \frac1{120} & \frac1{120}& \frac1{100}
 &\frac1{90}  &\frac1{72} & \frac1{75}  &\frac1{60}
 &\frac1{90} & \frac1{75}  &\frac1{72} & \frac1{60}
 &\frac4{225} & \frac1{45} &  \frac1{45}&   \frac1{36}
 \end{array}
 \right)
 \end{scriptsize}
 $$
which is of shape $4\times 4\times 4$. With equivariance (\Cref{lmm:coord-free equivariance}), this results in the level $k=3$ signature tensor  $\sigma^{(3)}(X)=[\![C^{(3)}; A, A, A]\!]$ of shape 
 $2\times 2\times 2$. We provide its \emph{first} homogeneous entry, 
 \begin{align*}
 {\sigma^{(3)}(X)}_{1,1,1}&=\frac{1}{36} a_{1, 1}^{3} + \frac{1}{12} a_{1, 1}^{2} a_{1, 2} + \frac{1}{12} a_{1, 1}^{2} a_{1, 3} + \frac{7}{72} a_{1, 1}^{2} a_{1, 4} + \frac{1}{12} a_{1, 1} a_{1, 2}^{2} \\&\quad+ \frac{11}{72} a_{1, 1} a_{1, 2} a_{1, 3} + \frac{13}{72} a_{1, 1} a_{1, 2} a_{1, 4} + \frac{1}{12} a_{1, 1} a_{1, 3}^{2} + \frac{13}{72} a_{1, 1} a_{1, 3} a_{1, 4} \\&\quad+ \frac{89}{900} a_{1, 1} a_{1, 4}^{2} + \frac{1}{36} a_{1, 2}^{3} + \frac{5}{72} a_{1, 2}^{2} a_{1, 3} + \frac{1}{12} a_{1, 2}^{2} a_{1, 4} \\&\quad+ \frac{5}{72} a_{1, 2} a_{1, 3}^{2} + \frac{34}{225} a_{1, 2} a_{1, 3} a_{1, 4} + \frac{1}{12} a_{1, 2} a_{1, 4}^{2} + \frac{1}{36} a_{1, 3}^{3} \\&\quad+ \frac{1}{12} a_{1, 3}^{2} a_{1, 4} + \frac{1}{12} a_{1, 3} a_{1, 4}^{2} + \frac{1}{36} a_{1, 4}^{3}. 
 \end{align*}
 The other (polynomial) entries of $\sigma^{(3)}(X)$ are similar and are therefore omitted here.  
\end{example}
\begin{corollary}\label{thm:complx}
    For every $d$-dimensional polynomial membrane $X$ of order $(m,n)$ we can compute $\sigma^{(k)}(X)$ in $\mathcal{O}(m^kn^k+kd^{k+1}mn)$ elementary operations. 
\end{corollary}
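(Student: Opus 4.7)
The plan is to combine the dictionary decomposition $X = A\,\mathsf{Mom}^{m,n} + L$ with equivariance. Here $A\in\mathbb R^{d\times mn}$ collects the coefficients of the bilinear monomials $s^it^j$ (with $1\le i\le m$, $1\le j\le n$), while $L$ consists only of pure $s$- and $t$-monomials; since $\partial_{12}L=0$, these terms do not contribute to the signature (see \Cref{rem:reduced membrane}). By equivariance (\Cref{lmm:coord-free equivariance}),
$$\sigma^{(k)}(X) = [\![C; A, \dots, A]\!], \qquad C := \sigma^{(k)}(\mathsf{Mom}^{m,n}).$$
Hence it suffices to first compute the core tensor $C$ once and then perform a Tucker contraction of $C$ with $k$ copies of $A$.

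For the core tensor, \Cref{prop:product membranes} together with the closed form \eqref{eq:momPathSignature} for moment-path signatures gives a fully factorized explicit formula for each entry, as illustrated in \Cref{ex:closed_form_k3}. Iterating through the $m^kn^k$ multi-indices $((i_1,j_1),\dots,(i_k,j_k))$ in lexicographic order while incrementally updating the prefix sums $i_1+\dots+i_\ell$, $j_1+\dots+j_\ell$ and the associated running numerators and denominators produces each entry in amortized constant time, yielding the first summand $O(m^kn^k)$.

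For the Tucker stage I would contract $A$ into one mode at a time. After $j$ of the $k$ contractions the intermediate tensor has shape $d^{\,j}\times (mn)^{k-j}$; contracting the next mode fills $d^{\,j+1}(mn)^{k-j-1}$ entries, each an inner product of length $mn$, at a cost of $d^{\,j+1}(mn)^{k-j}$ operations. Bounding each of the $k$ summands by $d^{k+1}\,mn$ gives $O(k\,d^{k+1}\,mn)$ for this stage, and adding the two contributions yields the claimed bound.

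The principal nuisance is simply the per-mode estimate in the Tucker contraction; once the contraction order is fixed, everything else follows directly from equivariance and the product structure of $\sigma(\mathsf{Mom}^{m,n})$ through the two moment-path signatures, and no new technical ingredient is required.
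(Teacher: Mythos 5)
Your route is the natural one and, as the paper states this corollary without a written proof, almost certainly the intended one: reduce $X$ to $A\,\mathsf{Mom}^{m,n}$ via the dictionary, compute the core $C=\sigma^{(k)}(\mathsf{Mom}^{m,n})$ using the factored closed form from \Cref{prop:product membranes} together with \eqref{eq:momPathSignature}, and contract via equivariance (\Cref{lmm:coord-free equivariance}). The $\mathcal{O}(m^kn^k)$ amortized estimate for filling the core by incremental prefix updates is correct.

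The gap is in the final per-mode bound. The mode-by-mode Tucker cost $\sum_{j=0}^{k-1}d^{j+1}(mn)^{k-j}$ is right, but your claim that each summand is at most $d^{k+1}mn$ is equivalent to $(mn)^{k-1-j}\le d^{k-j}$, and the binding case $j=0$ requires $(mn)^{k-1}\le d^{k}$, i.e.\ essentially $mn\le d$. When $mn$ exceeds $d$ the dominant summand is $d(mn)^{k}$ (at $j=0$), and the total Tucker cost is then $\Theta\bigl(d(mn)^{k}\bigr)$; this is $d$ times larger than $m^{k}n^{k}$ and also not $\mathcal{O}(kd^{k+1}mn)$ once $(mn)^{k-1}>kd^{k}$. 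So, as written, your argument establishes the stated complexity only under a hypothesis of the form $mn\lesssim d$, which is implicit in much of the paper (cf.\ \Cref{thm:pw-lin matr dim}) but is not part of \Cref{thm:complx}. Either add that hypothesis explicitly, or replace the target bound by the regime-agnostic $\mathcal{O}\bigl(m^{k}n^{k}+k\,\min(d,mn)\,\max(d,mn)^{k}\bigr)$, which is what your mode-by-mode analysis actually proves.
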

\begin{remark}\label{rem:complx}
   We can generalize  \Cref{thm:complx} for any class of membranes which are induced via linear transforms of product membranes, e.g., piecewise bilinear membranes from the next section.  
   Note that for the latter we can improve the analogous computational complexity bound by \Cref{thm:complexitySigOfBilMembrane}. 
\end{remark}

\subsection{Dictionaries for bilinear interpolation}\label{sec:subsectionBilinear}

\begin{definition}\label{def:pw bilin}
    A \DEF{piecewise bilinear membrane} is a membrane $X$ such that there are some $m,n \in \mb N$ with the property that $X$ is biaffine on all rectangles $[\frac{i-1}{m},\frac{i}{m}] \times [\frac{j-1}{n},\frac{j}{n}]$ with  $i \in [m]$ and $j \in[n]$. We say that $X$ has order  $(m,n)$.
\end{definition} 
Our interest in this class of membranes is threefold: from the path case, examined in \cite{bib:AFS2019}, we expect a certain relation of this class of membranes to the polynomial membranes in terms of their signatures, e.g. \Cref{thm:matrixVarEq}. We also know that the \emph{recovery problem} is solvable for piecewise linear paths of low order from \cite[Corollary 6.3]{bib:PSS2019}, and we would like to obtain a similar result for membranes (\Cref{q:recoveryProblem}). Finally, they form a rather natural class that can be used for the interpolation of discrete data, e.g. image data used in \cite{ZLT22}.

Bilinear interpolation of $d$-dimensional discrete data, arranged on an $(m+1)\times (n+1)$ grid, is a well-known procedure, e.g. in image processing \cite[Chapter 3.7.2]{bovik2009essential} or in the field of finite elements \cite[Chapters 5.1.3.2 and 6.2.2.1]{zienkiewicz_finite_2005}, \cite[Chapter 1.3.2]{elman2014finite}. 
The approach involves considering the four (unique) \emph{nodal bilinear basis functions} on the \emph{reference rectangle}, where each function is $1$ at one corner and $0$ at all others. The nodal basis on any arbitrary rectangle is then obtained through a linear transformation to the reference rectangle. Bilinear interpolation is simply the linear combination of the nodal basis functions, where the coefficients correspond exactly to the function values of the function being interpolated at the four grid points.

As in the polynomial case \eqref{def:moment_membrane}  we provide a product dictionary for piecewise bilinear membranes, so that its signature can be computed via \Cref{prop:product membranes}. For this, we have already recalled in   \Cref{example_one_prameter_signatures} the canonical axis path $\mathsf{Axis}^m$, which is illustrated in \Cref{fig:axis3_path} for $m=3$.
The resulting axis membrane takes the role of the above mentioned bases for interpolation. 
It is illustrated for order $(3,2)$  in \Cref{fig:axis32}.

    \begin{definition}\label{def:def_axis_membrane}
        The (canonical) \DEF{axis membrane} of order $(m,n)$ is the product
    \begin{align}\label{eq:def_axis_membrane}
        \mathsf{Axis}^{m,n}:=\mathsf{Axis}^m \boxtimes \mathsf{Axis}^n: [0,1]^2 &\to \mb R^{m} \tensor \mb R^n,
    \end{align}
    according to \Cref{def:product_membranes}.
    \end{definition} 
     Any piecewise linear path with $m$ segments \textit{starting at the origin} can be realized through a suitable linear transformation of $\mathsf{Axis}^m$. We want to show that the analogue is true for membranes, i.e., \eqref{eq:def_axis_membrane} is a suitable dictionary for all piecewise bilinear membranes of order $(m,n)$ which \emph{map the coordinate axes $[0,1] \times 0 \cup 0 \times [0,1]$ to the origin}.
   Let us first describe the axis membrane a bit more explicitly.

\begin{figure}
    \centering
\begin{subfigure}{0.24\textwidth}
        \centering
\includegraphics{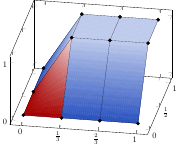}
\caption*{$\mathsf{Axis}^{3,2}_{1,1}$}
\end{subfigure}
\hfill
    \begin{subfigure}{0.24\textwidth}
        \centering
\includegraphics{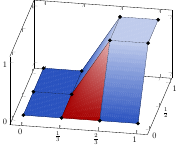}
\caption*{$\mathsf{Axis}^{3,2}_{2,1}$}
\end{subfigure}
\hfill
    \begin{subfigure}{0.24\textwidth}
        \centering
        \includegraphics{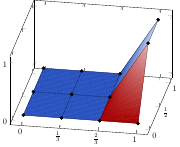}
        \caption*{$\mathsf{Axis}^{3,2}_{3,1}$} 
\end{subfigure}
\hfill 
\begin{subfigure}{0.24\textwidth}
        \centering
\includegraphics{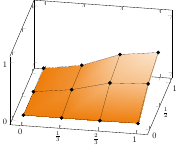}
\caption*{$A\circ\mathsf{Axis}_{3,2}$}
\end{subfigure}
\vspace{0.2cm}
            \centering
            \vspace{0.2cm}
\begin{subfigure}{0.24\textwidth}
        \centering
\includegraphics{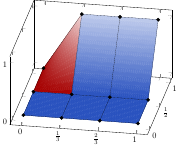}
\caption*{$\mathsf{Axis}^{3,2}_{1,2}$}
\end{subfigure}
\hfill
    \begin{subfigure}{0.24\textwidth}
        \centering
\includegraphics{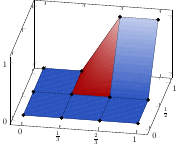}
\caption*{$\mathsf{Axis}^{3,2}_{2,2}$}
\end{subfigure}
\hfill
    \begin{subfigure}{0.24\textwidth}
        \centering
            \includegraphics{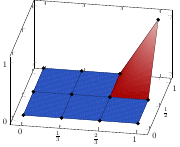}
        \caption*{$\mathsf{Axis}^{3,2}_{3,2}$} 
\end{subfigure}
\hfill 
\begin{subfigure}{0.24\textwidth}
        \centering
        \includegraphics{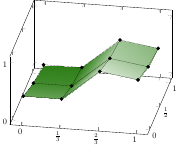}
\caption*{$R$}
\end{subfigure}
    \caption{On the left we have the $6$ graphs of the coordinate functions from the axis membrane $\mathsf{Axis}^{3,2}$, with the non-linear rectangle marked in red,  and the equidistant support points from \eqref{eq:linearSystemForAxisMembrane} in black. On the right we illustrate the decomposition of the $1$-dimensional piecewise bilinear membrane $Y_1$ from \Cref{fig:illustrativeMembranes} into the linear transform of the axis membrane $A\circ\mathsf{Axis}^{3,2}$ and the linear part $R$ according to \Cref{lem:obdaZeroOnBound}.}
    \label{fig:axis32}
\end{figure}

\begin{lemma}\label{lem:explicitversion}
 We consider the codomain from the axis membrane  \eqref{eq:def_axis_membrane} as a space of matrices, that is $\mathsf{Axis}^{m,n}:[0,1]^2\rightarrow\mathbb{R}^{m\times n}$. Then each of its coordinate functions is piecewise bilinear, 
$$\mathsf{Axis}^{m,n}_{i,j}(s,t):=
    \begin{scriptsize}
    \begin{cases}
        0
        &
        s\leq\frac{i-1}m\text{ or }t\leq\frac{j-1}n
        \\
        mnst+m(1-j)s+n(1-i)t+(i-1)(j-1)
        &
        \frac{i-1}m<s\leq\frac{i}m\text{ and }\frac{j-1}n<t\leq\frac{j}n
                \\
        nt+1-j
        &
        \frac{i}m<s\text{ and }\frac{j-1}n<t\leq\frac{j}n        \\
        ms+1-i
        &
        \frac{i-1}m<s\leq\frac{i}m\text{ and }\frac{j}n<t
        \\
        1
        &
        \frac{i}m<s\text{ and }\frac{j}n<t,
    \end{cases}
\end{scriptsize}
$$
where $i\in[m]$, $j\in[n]$ and $s,t\in [0,1]$. 
\end{lemma}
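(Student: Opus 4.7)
The plan is to compute the coordinate functions of $\mathsf{Axis}^{m,n}$ directly from the definitions and match the resulting case distinction with the five cases stated in the lemma.

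First I would write down the coordinate functions of the underlying one-parameter axis path $\mathsf{Axis}^m$ from \Cref{example_one_prameter_signatures} explicitly: since $\mathsf{Axis}^m$ is the unique piecewise linear interpolation of the equidistant support points $(k/m,\sum_{j\leq k} e_j)$ with $0\leq k\leq m$, its $i$-th component is the three-piece function
\[
\mathsf{Axis}^m_i(s) = \begin{cases} 0 & s \leq \tfrac{i-1}{m}, \\ ms + 1 - i & \tfrac{i-1}{m} < s \leq \tfrac{i}{m}, \\ 1 & s > \tfrac{i}{m}, \end{cases}
\]
and analogously for $\mathsf{Axis}^n_j(t)$. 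Both expressions are immediate from the interpolation condition, so this step is purely bookkeeping.

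Next I would invoke the definition of the product membrane (\Cref{def:product_membranes}) together with the identification of $\mb R^m \tensor \mb R^n$ with $\mb R^{m\times n}$ via $e_i \tensor e_j \mapsto E_{i,j}$. Under this identification we have
\[
\mathsf{Axis}^{m,n}_{i,j}(s,t) = \mathsf{Axis}^m_i(s)\cdot \mathsf{Axis}^n_j(t),
\]
so the $(i,j)$-coordinate of the axis membrane is just the product of the two one-parameter components computed above.

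Finally I would multiply the two three-case expressions, yielding nine subcases indexed by the position of $s$ and $t$ relative to the grid lines $\tfrac{i-1}{m},\tfrac{i}{m}$ and $\tfrac{j-1}{n},\tfrac{j}{n}$. Five of these subcases (those in which at least one factor vanishes) collapse to the single first case $s\leq \tfrac{i-1}{m}$ or $t\leq \tfrac{j-1}{n}$, and the remaining four correspond exactly to the last four cases of the lemma. The only genuinely non-trivial verification is the middle rectangle, where one expands
\[
(ms + 1 - i)(nt + 1 - j) = mnst + m(1-j)s + n(1-i)t + (i-1)(j-1),
\]
matching the biaffine expression given. The main thing to be careful about — the only real obstacle, and it is a mild one — is keeping the case-boundary conventions (strict versus non-strict inequalities) consistent with those already used for $\mathsf{Axis}^m$, so that the pieces glue to a well-defined continuous function on $[0,1]^2$.
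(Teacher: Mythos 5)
Your proof is correct, and it takes a genuinely different route from the paper's. You exploit the product structure $\mathsf{Axis}^{m,n}=\mathsf{Axis}^m\boxtimes\mathsf{Axis}^n$ directly: under the identification $e_i\tensor e_j\mapsto E_{i,j}$ the coordinate $\mathsf{Axis}^{m,n}_{i,j}(s,t)$ factors as $\mathsf{Axis}^m_i(s)\cdot\mathsf{Axis}^n_j(t)$, and the stated formula drops out of multiplying the two three-piece one-dimensional formulas. The paper instead never invokes the product structure in its proof; it sets up and solves the $4\times4$ linear system \eqref{eq:linearSystemForAxisMembrane} imposed by the bilinear interpolation conditions on the middle rectangle (and notes the remaining rectangles are analogous). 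Your approach is arguably the more natural one given that $\mathsf{Axis}^{m,n}$ is defined as a product membrane, and it explains \emph{why} the biaffine coefficient vector is $(mn,m(1-j),n(1-i),(i-1)(j-1))$ rather than merely verifying it. The paper's approach has the advantage of fitting the finite-element/interpolation narrative it has just set up and of being self-contained (it does not require first writing down an explicit formula for $\mathsf{Axis}^m_i$, which the paper only describes via support points). Your one caveat about strict versus non-strict boundary conventions is the right thing to flag; with the three-piece formula for $\mathsf{Axis}^m_i$ chosen to use $\leq$ on the right endpoint of each interval, the product gives exactly the case boundaries stated in the lemma.
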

\begin{proof}
Despite this being well-known we provide a bilinear interpolation in each of the $mn$ dimensions, i.e., we search for coefficients $c\in\mathbb{R}^{4}$ such that 
\begin{align*}
\psi_{i,j}:\left[\frac{i-1}m,\frac{i}m\right]\times\left[\frac{j-1}n,\frac{j}n\right]&\rightarrow\mathbb{R},(s,t)\mapsto c_{1}st+c_{2}s+c_{3}t+c_{4}
\end{align*}
satisfies the system 
\begin{align}\label{eq:linearSystemForAxisMembrane}
   \psi_{i,j}\left(\frac{i-1}{m},\frac{j-1}{n}\right)&=0,\quad
   \psi_{i,j}\left(\frac{i}{m},\frac{j-1}{n}\right)=0,\nonumber\\
   \psi_{i,j}\left(\frac{i-1}{m},\frac{j}{n}\right)&=0,\quad
   \psi_{i,j}\left(\frac{i}{m},\frac{j}{n}\right)=1. 
\end{align}
By solving \eqref{eq:linearSystemForAxisMembrane} in $c$ we obtain  $c=\left(mn,m(1-j),n(1-i),(i-1)(j-1)\right)$ and hence $\psi=\mathsf{Axis}^{m,n}$ when restricted on the rectangle $\left[\frac{i-1}m,\frac{i}m\right]\times\left[\frac{j-1}n,\frac{j}n\right]$. The claim follows with a similar (linear) system for the remaining rectangle. 
\end{proof}

With the product structure in \eqref{eq:def_axis_membrane} we can use \Cref{prop:product membranes} to compute its core signature tensor with the help of the core signature tensors of axis paths. 
 \begin{example}\label{ex:closed_form_k3_axis}
 From \cite[Example 2.1]{bib:AFS2019} we know that $\sigma(\mathsf{Axis}^m)_{i_1,\dots,i_k}$ is zero unless $i_1 \leq \dots\leq i_k$, and  $\frac{1}{k!}$ times the number of distinct permutations of the string $i_1\dots i_k$ otherwise. 
Explicitly, with \eqref{eq:signatureMatrixAxisPath} the  signature matrix is 
$$\langle\sigma(\mathsf{Axis}^{m,n}),(i_1,j_1)(i_2,j_2)\rangle:=\begin{cases}1&i_1<i_2\text{ and }j_1<j_2\\
\frac12&i_1=i_2\text{ and }j_1<j_2\\
\frac12&i_1<i_2\text{ and }j_1=j_2\\
\frac14&i_1=i_2\text{ and }j_1=j_2\\
0&\text{otherwise.}
\end{cases}
$$
\end{example}

Recall from \Cref{rem:reduced membrane} that we can always restrict our computations to membranes which are $0$ on the coordinate axes.

\begin{lemma}\label{lem:obdaZeroOnBound}
    Every piecewise bilinear membrane $X$ has a (unique) decomposition \begin{equation}\label{eq:obdaZeroOnBound}X=A\circ\Axis^{m,n}+R
    \end{equation}
    where $\Axis^{m,n}$ is according to \eqref{eq:def_axis_membrane}, $A$ is a linear transform, and $R$ is a piecewise \emph{linear} (not bilinear) membrane in the two arguments $s$ and $t$. Then,  
    $$\sigma(X)=\sigma(A\circ\Axis^{m,n}).$$
\end{lemma}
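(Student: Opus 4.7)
The plan is to construct $A$ directly from the bilinear part of $X$ on each subrectangle. The explicit formulas in \Cref{lem:explicitversion} tell us exactly which axis-membrane coordinate functions contribute a cross term $st$ on each piece, and hence force the definition of $A$. Once $A$ is fixed, $R$ is forced to equal $X - A\circ\Axis^{m,n}$, and the only remaining work is to check that this residue has the claimed form and gives the same signature as $X$.

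More precisely, for $(i,j),(i',j')\in[m]\times[n]$, I would compute the coefficient of $st$ in the biaffine expansion of $\Axis^{m,n}_{i,j}$ on the rectangle $R_{i',j'}:=[\tfrac{i'-1}{m},\tfrac{i'}{m}]\times[\tfrac{j'-1}{n},\tfrac{j'}{n}]$. A case-by-case inspection of the five cases in \Cref{lem:explicitversion} shows that this coefficient equals $mn$ if $(i,j)=(i',j')$ and $0$ otherwise; equivalently, $\partial_{12}\Axis^{m,n}_{i,j}$ equals $mn$ on $R_{i,j}$ and $0$ on every other rectangle. Hence for any linear $A:\mathbb R^{mn}\to\mathbb R^d$ with column vectors $A_{i,j}$, the value $\partial_{12}(A\circ\Axis^{m,n})$ equals $mn\,A_{i',j'}$ on each rectangle $R_{i',j'}$. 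Setting $A_{i',j'}:=(mn)^{-1}\,\partial_{12}X\big|_{R_{i',j'}}\in\mathbb R^d$ then kills the cross term on every piece, so $R:=X-A\circ\Axis^{m,n}$ is continuous and on each rectangle has the form $\alpha s+\beta t+\gamma$, i.e., is a piecewise linear (not bilinear) membrane in the required sense.

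Uniqueness is automatic from the same reading: any other such decomposition $X=A'\circ\Axis^{m,n}+R'$ must satisfy $\partial_{12}(A'\circ\Axis^{m,n})=\partial_{12}X$ on every rectangle, so $A'=A$ and hence $R'=R$. The signature identity then follows directly from \Cref{rem:reduced membrane}: for every $\alpha\in V^*$ the function $\alpha\circ R$ is affine on each rectangle, so $\partial_{12}(\alpha\circ R)=0$ almost everywhere, and therefore $\sigma(X)=\sigma(A\circ\Axis^{m,n})$. The only slightly tedious step is the five-case computation of the $st$-coefficient of $\Axis^{m,n}_{i,j}$ on each $R_{i',j'}$, but this reduces to direct inspection of \Cref{lem:explicitversion}; once that is in hand, existence, uniqueness and the signature equality all fall out of the same bookkeeping.
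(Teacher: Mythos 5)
Your proof is correct and takes essentially the same route as the paper: express $X$ in the basis of axis-membrane coordinate functions together with piecewise linear functions of $s$ or $t$ alone, then invoke \Cref{rem:reduced membrane}. You just make the paper's basis-decomposition argument fully constructive by observing that $\partial_{12}\Axis^{m,n}_{i,j}$ is $mn$ times the indicator of the rectangle $R_{i,j}$, which yields the coefficients of $A$ and the uniqueness claim in one stroke.
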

\begin{proof}
    The coordinate
functions $\{\mathsf{Axis}^{m,n}_{i,j}\mid i\in\{1,\dots,m\},j\in\{1,\dots,n\}\}$ of the canonical axis membrane form
an $\mathbb{R}$-basis of the space of $1$-dimensional piecewise bilinear membranes. We can extend this basis via $m+n-1$ functions which are piecewise linear in $s$ and constant in $t$, or vice versa. Applying a basis decomposition in each dimension, we obtain $A$ and $R$ in \eqref{eq:obdaZeroOnBound} for all $d$ and conclude by \Cref{rem:reduced membrane}.
\end{proof}
The decomposition is illustrated in \Cref{fig:axis32}. See also \cite[Lemma 2.22]{DS22} for a similar result applied on discrete data, using two-parameter sums signatures. Inspired by \cite[Algorithm 1]{DEFT22} and \cite[Theorem 4.5]{DS22}, we obtain the following computational result. 

\begin{theorem}\label{thm:complexitySigOfBilMembrane}
    Let $X$ be a piecewise bilinear membrane in $\mb R^d$ of order $(m,n)$ and $w \in T(\mb R^d)$ a word of length $k$. Then we can compute $\langle \sigma(X),w \rangle$ in $\mathcal{O}(k^3 m n)$. In particular, the $k$-th signature tensor $\sigma^{(k)}(X)$ is computable in $\mathcal{O}(d^kk^3 m n)$. 
\end{theorem}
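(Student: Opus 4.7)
The plan is to compute $\langle \sigma(X), w \rangle$ by a two-parameter dynamic programming sweep over the $m \times n$ grid. By \Cref{lem:obdaZeroOnBound} we may assume $X = A \circ \mathsf{Axis}^{m,n}$, since the linear remainder $R$ contributes trivially to the signature. Then $\partial_{12} X$ is a step function, equal to a constant vector $U_{p,q} \in \mb R^d$ on each cell $C_{p,q} := [\tfrac{p-1}{m}, \tfrac{p}{m}] \times [\tfrac{q-1}{n}, \tfrac{q}{n}]$, and $(p,q) \mapsto U_{p,q,w_\ell}$ is exactly the data needed to instantiate the integrand at each step $\ell \in \{1,\ldots,k\}$.

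Next, introduce the partial iterated integral
\[
\Phi_\ell(s, t) := \int\limits_{\substack{0 \leq s_1 \leq \cdots \leq s_\ell \leq s \\ 0 \leq t_1 \leq \cdots \leq t_\ell \leq t}} \prod_{j=1}^{\ell} \partial_{12} X_{w_j}(s_j, t_j) \, \mathrm{d}\mathbf{s}\, \mathrm{d}\mathbf{t},
\]
so that $\Phi_0 \equiv 1$, $\Phi_k(1,1) = \langle\sigma(X), w\rangle$, and $\Phi_\ell(s, t) = \int_0^s \int_0^t (\partial_{12} X)_{w_\ell}(u, v)\,\Phi_{\ell-1}(u, v) \, \mathrm{d}u\, \mathrm{d}v$. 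An easy induction shows that $\Phi_\ell$ restricted to any cell $C_{p,q}$ is a polynomial in $(s, t)$ of bidegree at most $(\ell, \ell)$: multiplying a bidegree-$(\ell{-}1, \ell{-}1)$ polynomial by the constant $U_{p, q, w_\ell}$ preserves the bidegree, and the integration $\int_0^s\int_0^t$ raises each variable's degree by at most one once the constant, strip-in-$s$, and strip-in-$t$ contributions from neighbouring cells are absorbed. Hence $\Phi_\ell$ can be stored as an array of $(\ell{+}1)^2 = O(k^2)$ coefficients per cell.

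The update $\Phi_{\ell-1} \to \Phi_\ell$ is carried out by sweeping the grid in lexicographic order. For each $C_{p,q}$, split $\int_0^s \int_0^t$ into four pieces: the fully-integrated south-west rectangle (a constant), the horizontal strip above it (a polynomial of bidegree $(0,\ell)$), the vertical strip to its right (bidegree $(\ell,0)$), and the partial integral over $C_{p,q}$ itself (bidegree $(\ell,\ell)$). Each can be maintained by two-parameter cumulative sums on the coefficient arrays, costing $O(k^2)$ per cell per level, hence $O(k \cdot mn \cdot k^2) = O(k^3 m n)$ for one word. Reading off $\Phi_k(1,1)$ at the top-right corner yields $\langle\sigma(X),w\rangle$, and enumerating all $d^k$ entries of $\sigma^{(k)}(X)$ multiplies the cost by $d^k$. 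The main obstacle will be the coefficient-level bookkeeping for these four pieces — in particular, ensuring the running horizontal and vertical strip integrals are updated incrementally so that each of the $O(k^2 mn)$ stored coefficients is touched only $O(1)$ times per level; everything else (the base case $\Phi_1 = X - X(\cdot, 0) - X(0, \cdot) + X(0, 0)$ and the inductive degree bound) is essentially routine.
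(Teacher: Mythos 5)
Your proposal is correct and follows essentially the same route as the paper: the paper's $f_w(u,v)$ is your $\Phi_\ell(s,t)$, the recursion is identical, the four-term split into south-west block, horizontal strip, vertical strip, and current-cell contribution matches the paper's $\mathrm{Bl}_{wi}$, $\mathrm{U}_{wi}$, $\mathrm{R}_{wi}$, $\mathrm{Ur}_{wi}$, and the bidegree-$(\ell,\ell)$-per-cell bound plus cumulative sums give the same $\mathcal{O}(k^2mn)$ per level and $\mathcal{O}(k^3mn)$ total. The only cosmetic difference is that you invoke \Cref{lem:obdaZeroOnBound} explicitly to reduce to $A\circ\Axis^{m,n}$, while the paper simply observes directly that $\partial_{12}X_i$ is piecewise constant; the bookkeeping you flag as the "main obstacle" is exactly what the paper handles via its cumulative-sum step.
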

\begin{proof}
        For $a,b \in \mb N$, write $I_{a,b}$ for the rectangle $[\frac{a-1} m, \frac{a} m] \times [\frac{b-1} n, \frac{b} n]$.
        For a word $w = i_1 \ldots i_j$ and $(u,v) \in [0,1]^2$ define
        $$f_w(u,v) := \int_{\Delta_j(u,v)} \partial_{12} X_{i_1}(s_1,t_1) \ldots \partial_{12} X_{i_j}(s_j,t_j) \ \mathrm{d}t_1 \ldots \mathrm{d}t_j \mathrm{d}s_1 \ldots \mathrm{d}s_j$$
        where $\Delta_j(u,v)$ denotes the set
        $$\{ 0\leq s_1 \leq \dots \leq s_j \leq u  \} \times \{ 0\leq t_1 \leq \dots \leq t_j \leq v  \}.$$
        Set $f_w \equiv 1$ for $w$ the empty word.
        Then by definition, $f_w(1,1) = \langle \sigma(X), w \rangle$ and we have the recursion
        $$f_{wi}(u,v) = \int_0^v \int_0^u  f_w(s,t) \cdot \partial_{12} X_i(s,t) \ \mathrm{d}s \mathrm{d}t$$
        for any letter $i$. Thus it suffices to show that we can compute $f_{wi}$ from $f_w$ in time $\mc O(k^2mn)$. We can rewrite the integral above as the sum of the four terms
        $$\mathrm{Bl}_{wi}(u,v) := \sum_{a=1}^{\lfloor vm\rfloor} \sum_{b=1}^{\lfloor un\rfloor} \int_{\frac {a-1} m}^{\frac{a} m} \int_{\frac {b-1} n}^{\frac{b} n} f_w(s,t) \cdot \partial_{12} X_i(s,t) \ \mathrm{d}s \mathrm{d}t,$$
        $$\mathrm U_{wi}(u,v) := \sum_{a=1}^{\lfloor vm\rfloor} \int_{\frac {a-1} m}^{\frac{a} m} \int_{\frac {\lfloor un\rfloor} n}^{u} f_w(s,t) \cdot \partial_{12} X_i(s,t) \ \mathrm{d}s \mathrm{d}t,$$
        $$\mathrm R_{wi}(u,v) := \sum_{b=1}^{\lfloor un \rfloor} \int_{\frac {b-1} n}^{\frac{b} n} \int_{\frac {\lfloor vm \rfloor} m}^{v} f_w(s,t) \cdot \partial_{12} X_i(s,t) \ \mathrm{d}s \mathrm{d}t,$$
        and
        $$\mathrm{Ur}_{wi}(u,v) := \int_{\frac {\lfloor vm \rfloor} m}^v \int_{\frac {\lfloor un \rfloor} n}^ u f_w(s,t) \cdot \partial_{12} X_i(s,t) \ \mathrm{d}s \mathrm{d}t,$$
        see \Cref{fig:unitsquare}.
        \begin{figure}
            \centering
            \includegraphics[width=0.2\linewidth]{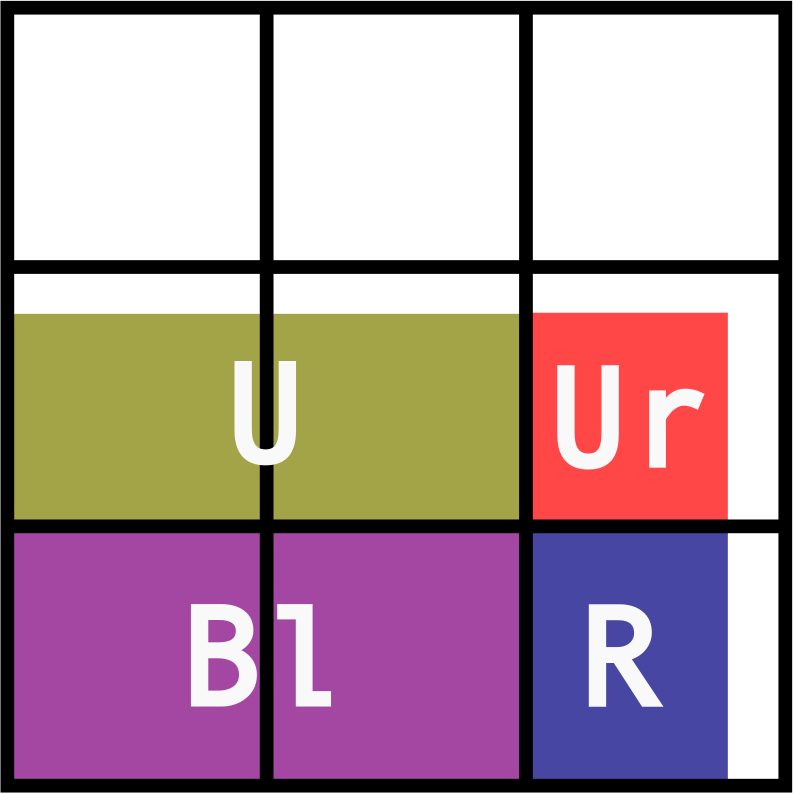}
            \caption{The four terms in the proof of \Cref{thm:complexitySigOfBilMembrane} correspond to the division of a rectangle in the $m \times n$ grid into four regions. In this example, $m=n=3$ and $(u,v)$ lies in $I_{3,2}$.}
            \label{fig:unitsquare}
        \end{figure}
        
        Note that if $f_w$ is given by a polynomial in $u$ and $v$ on all $I_{a,b}$, then again for all $a,b$ all four terms are polynomials in $u$ and $v$ for $(u,v) \in I_{a,b}$ and thus $f_{wi}$ will be a polynomial on all $I_{a,b}$ as well, since $\partial_{12} X_i$ is piecewise constant. As for the empty word $w = \emptyset$ we have $f_w \equiv 1$ it follows inductively that $f_w$ will always be polynomial of bidegree $(j,j)$ (where $j$ denotes the length of $w$) on each $I_{a,b}$. Thus, forming the at most $mn$ integrals in the definition of $\mathrm{Bl}_{wi}, \mathrm{U}_{wi}, \mathrm{R}_{wi}$ and $\mathrm{Ur}_{wi}$ takes time $\mc O(mnk^2)$. Then $f_{wi}$ can be computed via cumulative sums in $\mc O(mn)$ so that we get total runtime $\mc O(k^2mn)$, as desired.
\end{proof}

\begin{corollary}\label{cor:complexitySigMatOfBilMembrane}
    For every piecewise bilinear membrane  $X$ in $\mb R^d$ of order $(m,n)$ we can compute its signature matrix $S(X)=\sigma^{(2)}(X)$ in $\mathcal{O}(d^2 m n)$. 
\end{corollary}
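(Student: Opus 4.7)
The plan is to obtain this as an immediate specialization of \Cref{thm:complexitySigOfBilMembrane}. Recall that the theorem asserts that for any word $w$ of length $k$ we may compute $\langle \sigma(X),w\rangle$ in $\mathcal{O}(k^3 mn)$ operations, and that the full $k$-th tensor $\sigma^{(k)}(X)$ can be obtained in $\mathcal{O}(d^k k^3 mn)$ operations. Setting $k=2$, the factor $k^3 = 8$ is an absolute constant and may be absorbed into the implicit constant of the big-$\mathcal{O}$ notation, so the bound becomes $\mathcal{O}(d^2 mn)$.

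More concretely, what I would emphasize in writing this out is the following. The signature matrix has $d^2$ entries $S(X)_{i,j} = \langle \sigma(X), e_i^* \tensor e_j^*\rangle$ for $i,j\in[d]$. Following the recursion in the proof of \Cref{thm:complexitySigOfBilMembrane}, for each letter $i\in[d]$ one first computes the function $f_i$ on each of the $mn$ rectangles $I_{a,b}$ in time $\mathcal{O}(mn)$; this step is performed $d$ times, for total cost $\mathcal{O}(d\,mn)$. Then for each of the $d^2$ pairs $(i,j)$ one performs one more step of the recursion to compute $f_{ij}$ and evaluates $f_{ij}(1,1)$, each such step costing $\mathcal{O}(mn)$ by the piecewise polynomiality of bidegree $(1,1)$ (respectively $(2,2)$) on each $I_{a,b}$. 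Summing gives $\mathcal{O}(d^2 mn)$.

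There is really no obstacle here; this is essentially a bookkeeping statement. The only thing one has to double check is that the constant hidden in $k^3$ genuinely is a constant in the $k=2$ case (it is), and that passing from computing one entry of the matrix to all $d^2$ entries does not introduce additional dependence on $d$ beyond the expected factor $d^2$ — which is clear from the recursive structure, since the prefix computations (the $d$ functions $f_i$) are shared across the $d^2$ words of length two.
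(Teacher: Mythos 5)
Your proof is correct and takes exactly the intended route: the paper states this corollary without a separate proof, as it is the immediate specialization of \Cref{thm:complexitySigOfBilMembrane} to $k=2$, with the constant factor $k^3=8$ absorbed into the big-$\mathcal{O}$. Your additional unpacking of the recursion to explain why the $d^2$ factor arises cleanly (shared prefix computations for the $d$ single-letter words) is a nice clarification but not a departure from the paper's argument.
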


\section{Signature matrix varieties}\label{sec:sig_mat}

For the remainder of this article, we focus on the second level signature, that is, on signature matrices  of membranes $S=\sigma^{(2)}$. Following the approach in \cite{bib:AFS2019} we restrict to families of $d$-dimensional membranes, 
\begin{equation}\label{eq:im_polyn_secVar}
\left\{S(X)\;\begin{array}{|l}X:[0,1]^2\rightarrow{\mathbb{R}}^d\text{ polynomial}\\
\text{with order}\leq(m,n)\end{array}\right\}
\end{equation}
and 
\begin{equation}\label{eq:im_bilinear_secVar}\left\{S(X)\;\begin{array}{|l}X:[0,1]^2\rightarrow{\mathbb{R}}^d\text{ piecewise bilinear}\\
\text{of order }\leq(m,n)\end{array}\right\}
\end{equation}
with fixed $m,n\in\mathbb{N}$. 
By \Cref{lmm:coord-free equivariance} and \Cref{lem:obdaZeroOnBound}, both \eqref{eq:im_polyn_secVar} and \eqref{eq:im_bilinear_secVar} can be considered as the image of a polynomial map of degree two,  
\begin{equation}\label{eq:def_rational_map}
S:\mathbb{R}^{d\times mn}\rightarrow\mathbb{R}^{d^2},A\mapsto ACA^\top. 
\end{equation}
Here  $C\in\mathbb{R}^{mn\times mn}$ is either the signature core matrix $S(\nu \mathsf{Mom}^{m,n})$ according to \Cref{ex:closed_form_k3} for polynomial membranes, or the axis core matrix $S(\nu \mathsf{Axis}^{m,n})$
for the piecewise bilinear family from \Cref{ex:closed_form_k3_axis}. We choose the vectorization $\nu: \mb R^m \tensor \mb R^n \to \mb R^{mn}$ as in the proof of \Cref{cor:coordinate_product_membrane_kron}. In particular, these signature images are semialgebraic subsets of $\mathbb{R}^{d\times d}$ by the Tarski-Seidenberg theorem (\cite[Theorem 2.2.1]{bochnak2013real}). For convenience, we omit $\nu$ from the notation in the following. Using our result on the signature of product membranes, we see that \cite[Theorem 3.3]{bib:AFS2019} generalizes immediately:

\begin{theorem}\label{thm:matrixVarEq} For all $d,m$ and $n$, the semialgebraic sets \eqref{eq:im_polyn_secVar} and \eqref{eq:im_bilinear_secVar} agree.
\end{theorem}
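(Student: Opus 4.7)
The plan is to reduce both semialgebraic sets to images of matrix congruence maps $A\mapsto A C A^\top$ with $A\in\mathbb{R}^{d\times mn}$, and then to lift the one-parameter result \cite[Theorem 3.3]{bib:AFS2019} through the Kronecker factorizations of the core matrices.

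First I would rewrite \eqref{eq:im_polyn_secVar} and \eqref{eq:im_bilinear_secVar} explicitly as congruence images. Every polynomial membrane of order $\leq(m,n)$ has the same signature as some $A\circ\mathsf{Mom}^{m,n}$ with $A\in\mathbb{R}^{d\times mn}$, because the only monomials missing from $\mathsf{Mom}^{m,n}$ are the pure $s^i$ and $t^j$, and these are annihilated by $\partial_{12}$. By \Cref{lmm:coord-free equivariance}, \eqref{eq:im_polyn_secVar} then equals the image of $A\mapsto A C_{\mathsf{Mom}} A^\top$, where $C_{\mathsf{Mom}}:=S(\mathsf{Mom}^{m,n})$. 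The parallel statement for the bilinear side follows from \Cref{lem:obdaZeroOnBound} combined with equivariance, identifying \eqref{eq:im_bilinear_secVar} as the image of $A\mapsto A C_{\mathsf{Axis}} A^\top$ with $C_{\mathsf{Axis}}:=S(\mathsf{Axis}^{m,n})$. The theorem thus reduces to showing that these two images agree.

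Next, I would exploit the product structure of the dictionary membranes: by \Cref{cor:coordinate_product_membrane_kron} we have the Kronecker factorizations $C_{\mathsf{Mom}}=M_m\otimes M_n$ and $C_{\mathsf{Axis}}=X_m\otimes X_n$, where $M_k:=S(\mathsf{Mom}^k)$ and $X_k:=S(\mathsf{Axis}^k)$ are the one-parameter path core matrices. The path-case theorem \cite[Theorem 3.3]{bib:AFS2019} says that for every $k$ the corresponding congruence images coincide; specialising to $d=k$ and $A=I$ yields $M_k=P_k X_k P_k^\top$ for some $P_k\in\mathbb{R}^{k\times k}$. The main technical point I anticipate will be upgrading $P_k$ to an \emph{invertible} matrix. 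For this I would check that both $M_k$ and $X_k$ have full rank $k$: the axis core $X_k$ is upper triangular with nonzero diagonal entries by \Cref{example_one_prameter_signatures}, and $M_k$ has entries $\frac{j}{i+j}$ from \eqref{eq:momPathSignature}, which is a Cauchy-type matrix easily verified to be invertible. This forces $\rank(P_k)=k$ and hence $P_k\in\GL(k)$.

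Once $P_m,P_n\in\GL$ are in hand, the conclusion follows from the standard Kronecker identity
\[C_{\mathsf{Mom}}=(P_m\otimes P_n)\,C_{\mathsf{Axis}}\,(P_m\otimes P_n)^\top,\]
with $P_m\otimes P_n\in\GL(mn)$. The substitution $B:=A(P_m\otimes P_n)$ is a bijection $\mathbb{R}^{d\times mn}\to\mathbb{R}^{d\times mn}$ and converts the moment parametrization $A\mapsto A C_{\mathsf{Mom}} A^\top$ into the axis parametrization $B\mapsto B C_{\mathsf{Axis}} B^\top$, so the two images coincide as semialgebraic subsets of $\mathbb{R}^{d\times d}$, which is the claim.
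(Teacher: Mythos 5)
Your proof is correct and takes essentially the same route as the paper's: reduce both sets to congruence images $A\mapsto A\,C\,A^\top$, factor the core matrices as Kronecker products via \Cref{cor:coordinate_product_membrane_kron}, and conclude using an invertible congruence taking $S(\mathsf{Axis}^\ell)$ to $S(\mathsf{Mom}^\ell)$ and then tensoring. The only difference is that the paper simply cites that the proof of \cite[Theorem~3.3]{bib:AFS2019} exhibits an explicit invertible $H_\ell$ with $S(\mathsf{Mom}^\ell)=H_\ell S(\mathsf{Axis}^\ell)H_\ell^\top$, whereas you extract mere existence of some $P_\ell$ from the statement of that theorem (by evaluating the parametrization at $A=I$) and then deduce invertibility of $P_\ell$ from the full rank of $M_\ell$ (Cauchy-type) and $X_\ell$ (unitriangular up to scaling) — a harmless self-contained substitute for the citation.
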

\begin{proof}
By definition of the two sets, it suffices to find an invertible matrix $A$ such that $S(\mathsf{Mom}^{m,n})= AS(\mathsf{Axis}^{m,n})A^\top$.
Now by \Cref{cor:coordinate_product_membrane_kron}, we obtain that  $S(\mathsf{Mom}^{m,n}) = S(\mathsf{Mom}^{m}) \tensor S(\mathsf{Mom}^{n})$ and $S(\mathsf{Axis}^{m,n}) = S(\mathsf{Axis}^{m}) \tensor S(\mathsf{Axis}^{n})$. Moreover, in the proof of \cite[Theorem 3.3]{bib:AFS2019} it is shown that for arbitrary $\ell$ there is an invertible $\ell \times \ell$ matrix $H_\ell$ such that $S(\mathsf{Mom}^\ell)(w) = H_\ell S(\mathsf{Axis}^\ell) H_\ell^\top$. Thus, we can choose $A:=H_m \tensor H_n$.
\end{proof}

We want to study the polynomials that vanish on this set, recorded in the homogeneous prime ideals $P_{d,m,n}$. Equivalently, we examine the tightest outer approximation by an algebraic variety. As it is common in applied algebraic geometry, we move to $\mathbb{C}$ and consider the \emph{Zariski closure} $\mathcal{M}_{d,m,n}$ of the image of $S_{\mathbb C} := \mathbb C \tensor S$
for arbitrary $d,m,n\in\mathbb{N}$. The graph of $S_{\mathbb C}$ is an algebraic variety with vanishing ideal 
$$I_{d,m,n}:=\langle(ACA^\top)_{ i,j}-s_{i,j}\mid 1\leq i,j\leq d\rangle.$$
Note that $I_{d,m,n}$  is an ideal in the polynomial ring of $mnd+d^2$ variables: the entries of $A$ and  $s_{i,j}$ for $1\leq i,j\leq d$. 
The projection to the target of \eqref{eq:def_rational_map} is then cut out by the prime ideal
$$P_{d,m,n}:=I_{d,m,n}\cap\mathbb{C}[s_{i,j}\mid 1\leq i,j\leq d],$$
and agrees with $\mc M_{d,m,n}$ by construction. Compare \Cref{ex:eliminationConcrete_dkmn} for an step-by-step construction with explicit choices of $d,m$ and $n$. As in the path framework, the level $k=2$ signature matrices of polynomial and piecewise bilinear membranes coincide.

\begin{example}\label{ex:eliminationConcrete_dkmn}  For explicit $d,m$ and $n$ we compute $P_{d,m,n}$ with the moment core matrix $C_{m,n}=S(\mathsf{Mom}^m)\otimes S(\mathsf{Mom}^n)$ and with \emph{Gr\"obner bases}, applying an \emph{elimination of variables} procedure. We refer to \cite{CLOS07} for an introduction to the latter. 
\begin{enumerate}
\item If $m=d=2$ and $n=1$, then $AC_{m,n}A^\top$ is 
    \begin{small}
    $$
    \begin{bmatrix}
        \frac{1}{4} a_{1, 1}^{2} + \frac{1}{2} a_{1, 1} a_{1, 2} + \frac{1}{4} a_{1, 2}^{2}
        &
        \!\!\!\!\frac{1}{4} a_{1, 1} a_{2, 1} + \frac{1}{3} a_{1, 1} a_{2, 2} + \frac{1}{6} a_{2, 1} a_{1, 2} + \frac{1}{4} a_{1, 2} a_{2, 2}
        \\
        \frac{1}{4} a_{1, 1} a_{2, 1} + \frac{1}{6} a_{1, 1} a_{2, 2} + \frac{1}{3} a_{2, 1} a_{1, 2} + \frac{1}{4} a_{1, 2} a_{2, 2}
        &
        \frac{1}{4} a_{2, 1}^{2} + \frac{1}{2} a_{2, 1} a_{2, 2} + \frac{1}{4} a_{2, 2}^{2}
    \end{bmatrix}
    $$
    \end{small}
    and with elimination of variables,
    \begin{equation}\label{eq:elimIdeal2121_poly}P_{2,2,1}=\langle 4s_{1, 1}s_{2, 2} - s_{2, 1}^2 - 2s_{2, 1}s_{1, 2} - s_{1, 2}^2\rangle\end{equation}
    is generated by the relation \cite[eq. (5)]{bib:AFS2019} in $\mathbb{C}^{2\times 2}$, i.e., the determinant of the symmetric part of the $2\times2$ matrix $S$.  
    \item 
    If $m=d=n=2$, then 
    \begin{equation}\label{eq:graphIdeal2221_poly}I_{2,2,1}=\langle S_{1,1}-s_{1,1},S_{2,1}-s_{2,1},S_{1,2}-s_{1,2},S_{2,2}-s_{2,2}\rangle\end{equation}
    with polynomials $S_{i,j}=S(X)_{i,j}$ according to \Cref{ex:closed_formk2}. With Gr\"obner bases we can show that 
    $P_{d,m,n}$ is the zero ideal, thus $\dim(\mathcal{M}_{2,2,2})=4$. 
    \end{enumerate}
\end{example}
Even for $m=n=2$, the last example illustrates that membranes generate \emph{larger} varieties than in the path framework \cite{bib:AFS2019}.
In fact, the dimension of $\mathcal{M}_{2,2,2}$ is maximal, providing an initial illustration of our main result,  \Cref{thm:mainResult}.

Let us make some easy observations about the varieties $\mc M_{d,m,n}$.

\begin{proposition}\label{prop:irred_Vars}
    The variety $\mc M_{d,m,n}$ is irreducible.
\end{proposition}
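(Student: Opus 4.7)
The plan is to observe that $\mathcal{M}_{d,m,n}$ is by construction the Zariski closure of the image of the polynomial map
\[
S_{\mathbb{C}}: \mathbb{C}^{d\times mn} \to \mathbb{C}^{d\times d}, \quad A \mapsto ACA^\top,
\]
where $C$ is either $S(\mathsf{Mom}^{m,n})$ or $S(\mathsf{Axis}^{m,n})$ (these yield the same closure by \Cref{thm:matrixVarEq}). Since everything reduces to a standard fact about morphisms of varieties, the proof is essentially a one-liner, with no real obstacle.

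The two ingredients I would invoke are: (i) the affine space $\mathbb{C}^{d\times mn}$ is irreducible, being a Zariski-dense open subset of itself, and (ii) the continuous (in the Zariski topology) image of an irreducible space is irreducible, and the Zariski closure of an irreducible subset remains irreducible. Concretely, if $\mathcal{M}_{d,m,n} = Z_1 \cup Z_2$ were a nontrivial decomposition into proper closed subsets, then $\mathbb{C}^{d\times mn} = S_{\mathbb{C}}^{-1}(Z_1) \cup S_{\mathbb{C}}^{-1}(Z_2)$ would be a decomposition into proper Zariski-closed subsets (proper because neither $S_{\mathbb{C}}^{-1}(Z_i)$ can be all of $\mathbb{C}^{d\times mn}$, otherwise the dense image would land in $Z_i$, forcing $Z_i = \mathcal{M}_{d,m,n}$). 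This contradicts irreducibility of affine space.

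Hence $\mathcal{M}_{d,m,n}$ is irreducible. No subtlety arises here; the only reason to record the statement is that it is used in the dimension computations and in the identification $\mathcal{M}_{d,m,n} = \mathbb{C}^{d\times d}$ later on, where one wants to equate two irreducible varieties by comparing dimensions.
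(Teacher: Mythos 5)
Your argument is correct and is essentially the same as the paper's: both proofs observe that $\mathcal{M}_{d,m,n}$ is the Zariski closure of the image of an irreducible affine space under a polynomial map (congruence by the core matrix), and both then invoke the standard fact that such a closure is irreducible. The paper states this more tersely, referring to the ``schematic image,'' whereas you unwind the topological argument explicitly, but there is no substantive difference.
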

\begin{proof}
    By \Cref{lmm:coord-free equivariance}, $\mc M_{d,m,n}$ is the schematic image of  $A \mapsto A S(\mathsf{Axis}^{m,n}) A^\top$ where $A \in \mb C^{mn\times d}$. Thus, as $\mb C^{mn\times d}$ is irreducible, so is $\mc M_{d,m,n}$.
\end{proof}
\begin{corollary}For every 
 $d\in\mathbb{N}$ there exist $M,N\in\mathbb{N}$ such that $\mathcal{M}_{d,m,N+i}=\mathcal{M}_{d,m,N}$ and $\mathcal{M}_{d,M+i,n}=\mathcal{M}_{d,M,n}$ for all $i\in\mathbb{N}$. In other words, the chains in \eqref{eq:grid_of_varieties} stabilize.
\end{corollary}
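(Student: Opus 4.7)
The strategy is to combine the Noetherianity of the ambient affine space $\mathbb{C}^{d\times d}$ with the directed structure of the bivariate family $\{\mathcal{M}_{d,m,n}\}_{(m,n)\in\mathbb{N}^2}$. Each $\mathcal{M}_{d,m,n}$ is a closed subvariety of $\mathbb{C}^{d\times d}$, a Noetherian topological space, so any ascending chain of such subvarieties must stabilize. Applied to a row with $m$ fixed, this immediately produces an index $N_m \in \mathbb{N}$ beyond which $\mathcal{M}_{d,m,n}$ is constant in $n$, and symmetrically there is a column index $M_n$.

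To obtain the uniform indices $M$ and $N$ claimed in the statement (that is, a single $N$ working for \emph{every} row $m$ simultaneously, and likewise a single $M$), I would upgrade this to a two-dimensional argument. The family $\mathcal{F} := \{\mathcal{M}_{d,m,n}\}_{(m,n)\in\mathbb{N}^2}$ is directed under inclusion: for indices $(m_1,n_1)$ and $(m_2,n_2)$, the monotonicity displayed in \eqref{eq:grid_of_varieties} yields the common upper bound $\mathcal{M}_{d,\max(m_1,m_2),\max(n_1,n_2)}$. A directed family of closed subsets in a Noetherian space admits a unique maximum element, as one sees by iteratively enlarging any chosen member via directedness and appealing to Noetherianity to force the process to terminate. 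Let $\mathcal{M}_{d,M_0,N_0}$ denote this maximum.

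For every pair with $m \geq M_0$ and $n \geq N_0$, one then has $\mathcal{M}_{d,M_0,N_0} \subseteq \mathcal{M}_{d,m,n} \subseteq \mathcal{M}_{d,M_0,N_0}$, the first inclusion by monotonicity and the second by maximality, forcing equality. In particular, every row with $m \geq M_0$ has stabilized by index $N_0$. For the finitely many remaining rows $1\leq m < M_0$, each already admits its own stabilization index $N_m$ from the first step, so setting
\[
N := \max(N_0, N_1, \dots, N_{M_0-1})
\]
gives a uniform bound valid for all $m$. The same construction with the roles of $m$ and $n$ exchanged produces $M$.

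The only mildly subtle point is the passage from stabilization of each individual chain (which is immediate from Noetherianity) to uniform stabilization across all rows at once; this is precisely what the directedness argument buys us. The rest is routine. As a variant, one could instead use \Cref{prop:irred_Vars} together with the bound $\dim \mathcal{M}_{d,m,n}\leq d^2$ and the standard fact that an irreducible variety cannot strictly contain another of the same dimension, but the Noetherian formulation above seems cleanest.
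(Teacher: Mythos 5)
Your proof is correct, but it takes a more circuitous route than the one the paper intends. The corollary is stated as an immediate consequence of \Cref{prop:irred_Vars} (irreducibility), and the intended argument exploits that: a strict inclusion of irreducible varieties forces a strict increase in dimension, and since every $\mathcal M_{d,m,n}$ lives in $\mathbb C^{d\times d}$, its dimension is at most $d^2$. Hence any ascending chain $\mathcal M_{d,m,1}\subseteq\mathcal M_{d,m,2}\subseteq\cdots$ has at most $d^2+1$ distinct members, so $N := d^2 + 1$ works simultaneously for every row $m$ (and symmetrically for $M$). This gives the uniform bound in one stroke.

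Your approach replaces the dimension count by bare Noetherianity of $\mathbb C^{d\times d}$, which only yields row-by-row stabilization with potentially unbounded indices $N_m$; you correctly identify this subtlety and recover uniformity via a directedness/Zorn argument on the two-parameter family. That is valid, but it is more work and uses nothing about the specific varieties at hand. You do mention the irreducibility-plus-dimension variant at the very end but dismiss it as less clean; in fact it is the shorter path and the one the paper's placement of the corollary signals, since it avoids the directedness step entirely by producing an explicit uniform bound. The one thing your route genuinely buys is independence from \Cref{prop:irred_Vars}, so it would survive even if irreducibility were dropped.
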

Taking a product of an $m$-dimensional path with a $1$-dimensional path results in (scaled) path signature tensors. This immediately implies the following:

\begin{lemma}\label{lmm:membrane vs path signature}
    $\mc M_{d,m,1} = \mc M_{d,m}^{\mathsf{path}}$ where $\mc M^\mathsf{path}_{d,m}$ denotes (the affine cone of) the signature matrix variety considered in \cite[Section 3.1]{bib:AFS2019}.
\end{lemma}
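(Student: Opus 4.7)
The plan is to exhibit the order-$(m,1)$ moment membrane as a product of the moment path $\mathsf{Mom}^m$ with the $1$-dimensional identity path, and then read off the signature matrix via the scaling corollary proved earlier in this section.

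First, by \Cref{lmm:coord-free equivariance} together with \Cref{rem:reduced membrane}, the set $\mc M_{d,m,1}$ is the Zariski closure of the image of the map $A \mapsto A\, S(\mathsf{Mom}^{m,1})\, A^\top$, $A \in \mb C^{d \times m}$, because every polynomial membrane of order $(m,1)$ has, up to an additive term vanishing under $\sigma$, the form $A \circ \mathsf{Mom}^{m,1}$. By definition of the product membrane we have $\mathsf{Mom}^{m,1} = \mathsf{Mom}^m \boxtimes \mathsf{Mom}^1$, where $\mathsf{Mom}^1 : [0,1] \to \mb R$ is the identity path.

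Next, I would apply \Cref{cor:scaling path membrane} with $X = \mathsf{Mom}^1$ and $Y = \mathsf{Mom}^m$, obtaining
\[
S(\mathsf{Mom}^{m,1}) \;=\; \tfrac{1}{2!}\bigl(\mathsf{Mom}^1(1) - \mathsf{Mom}^1(0)\bigr)^{2}\, S(\mathsf{Mom}^m) \;=\; \tfrac12\, S(\mathsf{Mom}^m).
\]
Consequently, the image of $A \mapsto A\, S(\mathsf{Mom}^{m,1})\, A^\top$ equals the image of $A \mapsto \tfrac12\, A\, S(\mathsf{Mom}^m)\, A^\top$. Since rescaling $A \mapsto \sqrt{2}\, A$ (which makes sense over $\mb C$) is a bijection on $\mb C^{d\times m}$, these two images are identical as subsets of $\mb C^{d\times d}$, so in particular they have the same Zariski closure.

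Finally, I would invoke \cite[Theorem 3.3]{bib:AFS2019}, which asserts that the Zariski closure of $\{A\, S(\mathsf{Mom}^m)\, A^\top : A \in \mb C^{d\times m}\}$ is exactly the affine cone $\mc M^{\mathsf{path}}_{d,m}$ of signature matrices of $d$-dimensional polynomial (equivalently, piecewise linear) paths of order $m$. Combining the above identifications yields $\mc M_{d,m,1} = \mc M^{\mathsf{path}}_{d,m}$. There is no real obstacle here beyond checking that the $1$-dimensional factor applies cleanly; the main content is the reduction via \Cref{cor:scaling path membrane} which precisely isolates a scalar multiple of the path signature.
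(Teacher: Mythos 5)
Your proof is correct and follows essentially the same route as the paper: reduce to signature matrices of the form $A\, C\, A^\top$ via the dictionary, recognize the order-$(m,1)$ dictionary membrane as a product with a $1$-dimensional path, and apply \Cref{cor:scaling path membrane} to reduce to the path core matrix. The paper works with $\mathsf{Axis}^{m,1}$ rather than $\mathsf{Mom}^{m,1}$, but by \Cref{thm:matrixVarEq} the two yield the same variety, so this is a cosmetic choice. One small slip: you invoke \Cref{cor:scaling path membrane} with $X=\mathsf{Mom}^1$, $Y=\mathsf{Mom}^m$, which gives the signature of $\mathsf{Mom}^1 \boxtimes \mathsf{Mom}^m = \mathsf{Mom}^{1,m}$, not of $\mathsf{Mom}^{m,1}=\mathsf{Mom}^m\boxtimes\mathsf{Mom}^1$ as stated; the conclusion $S(\mathsf{Mom}^{m,1})=\tfrac12 S(\mathsf{Mom}^m)$ still holds because the factorization in \Cref{prop:product membranes} is symmetric in the two tensor legs, but this should be said (or the roles of $X$ and $Y$ swapped). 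Your explicit rescaling $A\mapsto\sqrt 2 A$ to absorb the factor $\tfrac12$ is a nice touch, though it is also automatic from the fact that both sets are affine cones.
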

\begin{proof}
     This follows from \Cref{cor:scaling path membrane} as we can always identify a map $A: \mb R \tensor \mb R^m \to \mb R^d$ with a map $A': \mb R^m \to \mb R^d$ such that $A = 
   \mathsf{id} \tensor A'$ and thus $A \circ \mathsf{Axis}^{m,1} = \left(A' \circ \mathsf{Axis}^m\right) \boxtimes \mathsf{Axis}^1$.
\end{proof}

The next lemma is technical, but holds the key to understanding the varieties $\mc M_{d,m,n}$.

\begin{lemma}\label{lmm:sig normal form}
    Let $C_{m,n}$ denote the signature matrix $S(\Axis_{m,n})$. Then the canonical form for congruence of $C_{m,n}$, as defined in \cite{HORN20061010}, is
    \begin{itemize}
        \item $\Gamma_3 \oplus H_4(1)^{\oplus \frac{m+n-4}{2}} \oplus \Gamma_1^{\oplus (m-2)(n-2)+1}$ if $m,n$ are even, 
        \item $\Gamma_2 \oplus H_4(1)^{\oplus \frac{n-1}{2}} \oplus H_2(-1)^{\oplus \frac{m-2}2} \oplus \Gamma_1^{\oplus (m-2)(n-1)}$ if $m$ is even, $n$ is odd, 
        \item $H_2(-1)^{\oplus \frac{m+n-2}{2}} \oplus \Gamma_1^{\oplus (m-1)(n-1)+1}$ if $m,n$ are odd. 
    \end{itemize}
     We refer to \cite{TERAN201144} for the definition of the matrices $J_k(0)$, $\Gamma_k$ and $H_{2k}(\mu)$.
\end{lemma}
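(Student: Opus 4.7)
The plan is to compute the Jordan canonical form of the cosquare $\Phi := C_{m,n}^{-\top} C_{m,n}$ and then translate the resulting Jordan data into the congruence canonical form via the Horn--Sergeichuk dictionary of~\cite{HORN20061010}. The key enabling fact is that the cosquare is multiplicative under Kronecker products, which reduces the problem to the single-path case of $A_m := S(\mathsf{Axis}^m)$.

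First I will handle the path case, i.e.\ determine the Jordan structure of $\Phi_m := A_m^{-\top}A_m$. Since the symmetric part of $A_m$ equals $\tfrac12 \mathbf{1}\mathbf{1}^\top$, one has $A_m + A_m^\top = \mathbf{1}\mathbf{1}^\top$, so
$$\ker(\Phi_m + I) = \{v : \mathbf{1}^\top v = 0\}, \qquad \ker(\Phi_m - I) = \ker K_m,$$
where $K_m := \tfrac12(A_m - A_m^\top)$. The first eigenspace is $(m-1)$-dimensional, and the second is trivial for even $m$ and one-dimensional for odd $m$ (odd-sized skew-symmetric matrices are singular). A short forward-substitution gives $A_m^{-\top}\mathbf{1} = 2(1,-1,1,-1,\ldots)^\top$, so $c := \mathbf{1}^\top A_m^{-\top}\mathbf{1}$ vanishes exactly when $m$ is even. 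Combined with the identity $(\Phi_m + I)^2 = c\cdot A_m^{-\top}\mathbf{1}\mathbf{1}^\top$, a nullity count then yields
$$\Phi_m \sim \begin{cases} J_2(-1) \oplus J_1(-1)^{\oplus (m-2)}, & m \text{ even,}\\ J_1(1) \oplus J_1(-1)^{\oplus (m-1)}, & m \text{ odd.}\end{cases}$$

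Next, by \Cref{cor:coordinate_product_membrane_kron} we have $C_{m,n} = A_m \otimes A_n$, and since $(A\otimes B)^{-\top}(A\otimes B) = A^{-\top}A \otimes B^{-\top}B$, the cosquare of $C_{m,n}$ is $\Phi_m \otimes \Phi_n$. I will then apply the classical formula
$$J_p(\alpha) \otimes J_q(\beta) \sim \bigoplus_{i=1}^{\min(p,q)} J_{p+q-2i+1}(\alpha\beta) \quad (\alpha\beta \neq 0)$$
to each cross term of $\Phi_m \otimes \Phi_n$ and sort the resulting blocks by eigenvalue. For instance, in the $m,n$-even case the output is $J_3(1) \oplus J_2(1)^{\oplus (m+n-4)} \oplus J_1(1)^{\oplus (m-2)(n-2)+1}$; the other two parity cases are analogous.

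Finally, I will translate the Jordan structure into the congruence canonical form using the Horn--Sergeichuk correspondence: each Jordan block $J_k(\mu)$ with $\mu = (-1)^{k+1}$ contributes a single $\Gamma_k$ block, while the remaining Jordan blocks at $\mu = \pm 1$ combine in equal-size pairs into $H_{2k}(\mu)$ blocks. For the $m,n$-even case this converts the Jordan data above into $\Gamma_3 \oplus H_4(1)^{\oplus (m+n-4)/2} \oplus \Gamma_1^{\oplus (m-2)(n-2)+1}$, as claimed. I expect the main obstacle to be the bookkeeping tying Steps~2 and~3 together: one must check in every parity case that the ``leftover'' multiplicities (such as $m+n-4$ copies of $J_2(1)$ for $m,n$ even, or $m-2$ copies of $J_1(-1)$ for $m$ even and $n$ odd) are indeed even so that the remaining blocks package cleanly into $H$-blocks. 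This even-parity check is precisely where the hypotheses on $m$ and $n$ enter, and it is what dictates the three-case structure of the statement.
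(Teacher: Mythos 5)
Your proof is correct, and it follows the same overall architecture as the paper's: invoke the Horn--Sergeichuk dictionary between congruence canonical forms and Jordan forms of cosquares, exploit the Kronecker factorization $C_{m,n} = A_m \otimes A_n$ together with multiplicativity of the cosquare under $\otimes$, apply the classical Jordan-form-of-a-Kronecker-product formula (the paper cites \cite[Theorem 4.6]{BRUALDI198531}), and finally translate back. The genuine difference is in how you determine the Jordan form of the path cosquares $\Phi_m = A_m^{-\top} A_m$. The paper first passes to the congruence normal form $N_m$ of $A_m$ by citing \cite[Theorem 3.4]{bib:AFS2019}, and then computes the cosquare of $N_m$ block-by-block (each $\tau$ giving $J_1(-1)^{\oplus 2}$, the $2\times 2$ corner block giving $J_2(-1)$). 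You instead work directly with $A_m$, using the identities $A_m + A_m^\top = \mathbf{1}\mathbf{1}^\top$, the forward-substitution $A_m^{-\top}\mathbf{1} = 2(1,-1,\ldots)^\top$, and $(\Phi_m + I)^2 = c\cdot A_m^{-\top}\mathbf{1}\mathbf{1}^\top$ with $c = \mathbf{1}^\top A_m^{-\top}\mathbf{1}$ vanishing iff $m$ is even, followed by a kernel-dimension count. Your route is more self-contained and avoids relying on the path normal form from \cite{bib:AFS2019}; the paper's route is a bit shorter precisely because it reuses that established fact. One small gap worth smoothing: for odd $m$ you assert $\dim\ker(\Phi_m - I) = 1$ from ``odd skew-symmetric matrices are singular,'' which only gives $\geq 1$; the matching upper bound should be stated explicitly, e.g.\ $\dim\ker(\Phi_m - I) \leq m - \dim\ker(\Phi_m+I) = 1$ since eigenspaces for distinct eigenvalues are independent. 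Similarly, for even $m$ the triviality of $\ker(\Phi_m - I)$ is best justified not by invertibility of the skew part (which you would then have to prove) but simply by noting that $(\Phi_m+I)^2 = 0$ forces the spectrum to be $\{-1\}$.
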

\begin{proof}
    We use the following fact which is proven in \cite[page 1016]{HORN20061010}: let $A$ be a non-singular complex square matrix. Then the blocks in the canonical form for congruence of $A$ are in one-to-one correspondence with the blocks in the Jordan canonical form of the cosquare $A^{-\top} A$ in the following way: the blocks $\Gamma_k$ in the canonical form for congruence correspond to the blocks $J_k((-1)^{k+1})$ in the Jordan canonical form of $A^{-\top} A$ and the blocks $H_{2k}(\mu)$ correspond to pairs of blocks $J_k(\mu) \oplus J_k(\mu^{-1})$ (the blocks $J_k(0)$ do not appear in the non-singular case).\par
    As $C_{m,n}$ is indeed non-singular (by \Cref{ex:closed_form_k3_axis} we have $\det(C_{m,n}) = \frac{1}{4^{mn}}$), our strategy is thus to determine the Jordan canonical form of $C_{m,n}^{-\top} C_{m,n}$. This is simplified significantly by the following observation: writing $C_u := S(\mathsf{Axis}^u)$, we have  $C_{m,n} = C_m \tensor C_n$ by \Cref{prop:product membranes}, where $\tensor$ denotes the Kronecker product. In particular $$C_{m,n}^{-\top} C_{m,n} = (C_m \tensor C_n)^{-\top} (C_m \tensor C_n) = C_m^{-\top} C_m \tensor C_n^{-\top} C_n.$$ Moreover, as observed in the proof of \cite[Theorem 3.4]{bib:AFS2019}, the matrix $C_u$ is congruent to $N_u := \inlmatr{1 & 1 \\ -1 & 0} \oplus \tau^{\frac{u-2}{2}}$ if $u$ is even, and congruent to $N_u := \matr{1} \oplus \tau^{\frac{u-1}{2}}$ if $u$ is odd, where $\tau = \inlmatr{0 & 1 \\ -1 & 0}$. Thus, we may replace $C_{m,n}$ by $N_m \tensor N_n$, $C_m$ by $N_m$ and $C_n$ by $N_n$. These observations reduce the problem to the following two questions:
    \begin{enumerate}
        \item What are the Jordan canonical forms of $N_u^{-\top}N_u$?
        \item Given the Jordan canonical forms of two matrices, what is the Jordan canonical form of their Kronecker product?
    \end{enumerate}
    For the first question, note that  $\tau^{-\top} \tau = \inlmatr{-1 & 0 \\ 0 & -1} = J_1(-1)^{\oplus 2}$ and $$\matr{1 & 1 \\ -1 & 0}^{-\top} \matr{1 & 1 \\ -1 & 0} = \matr{-1 & 0 \\ -2 & -1}$$
    whose Jordan canonical form is $\inlmatr{-1 & 1 \\ 0 & -1}=J_2(-1)$.
    Thus, if $u$ is odd then the canonical form of $N_u^{-\top} N_u$ is $J_1(1) \oplus J_1(-1)^{\oplus u-1}$, and if $u$ is even then the canonical form of $N_u^{-\top} N_u$ is $J_2(-1) \oplus J_1(-1)^{\oplus u-2}$.\par
    The answer to the second question is \cite[Theorem 4.6]{BRUALDI198531} from which we obtain: the Jordan canonical form of $C_{m,n}^{-T} C_{m,n}$ is
    \begin{itemize}
        \item $J_3(1) \oplus J_1(1) \oplus J_2(1)^{\oplus m-2 + n-2} \oplus J_1(1)^{\oplus (m-2)(n-2)}$ if $m,n$ are even, 
        \item $J_2(-1) \oplus J_2(1)^{\oplus n-1} \oplus J_1(-1)^{\oplus m-2} \oplus J_1(1)^{\oplus (m-2)(n-1)}$ if $m$ is even, $n$ is odd, 
        \item $J_1(1) \oplus J_1(-1)^{\oplus m-1 + n-1} \oplus J_1(1)^{\oplus (m-1)(n-1)}$ if $m,n$ are odd. 
    \end{itemize}
    Thus the canonical form for congruence of $C_{m,n}$ is
    \begin{itemize}
        \item $\Gamma_3 \oplus H_4(1)^{\oplus \frac{m+n-4}{2}} \oplus \Gamma_1^{\oplus (m-2)(n-2)+1}$ if $m,n$ are even, 
        \item $\Gamma_2 \oplus H_4(1)^{\oplus \frac{n-1}{2}} \oplus H_2(-1)^{\oplus \frac{m-2}2} \oplus \Gamma_1^{\oplus (m-2)(n-1)}$ if $m$ is even, $n$ is odd, 
        \item $H_2(-1)^{\oplus \frac{m+n-2}{2}} \oplus \Gamma_1^{\oplus (m-1)(n-1)+1}$ if $m,n$ are odd. 
    \end{itemize}
\end{proof}

\begin{corollary}\label{thm:pw-lin matr dim}
    For $mn \leq d$ the dimension of $\mc M_{d,m,n}$ is
    \begin{itemize}
       \item $dmn - \frac{1}{2}m^2 n^2 +  m^2 (n-1) + (m-1)n^2 - \frac 7 2 mn + 4(m+n) - 4$ for even $m,n$.
       \item $dmn- \frac{1}{2}m^2 n^2+m^2(n-1) + (m-1)n^2 - \frac 3 2 mn + m + n$ for even $m$, odd $n$.
       \item $dmn - \frac{1}{2}m^2 n^2 + m^2 (n-1) + (m-1)n^2 - \frac 7 2 mn + 3(m+n) - 2$ for odd $m,n$.
    \end{itemize}
    We omit the case $m$ odd, $n$ even due to symmetry.
\end{corollary}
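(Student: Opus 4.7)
The plan is to reduce the dimension computation to a problem about the congruence orbit of a specific matrix, and then apply the explicit orbit-dimension formulas of \cite{TERAN201144} to the canonical form described in \Cref{lmm:sig normal form}.

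First, I would observe that $\mathcal{M}_{d,m,n}$ is the Zariski closure of the image of the map $\Phi: \mathbb{C}^{d\times mn} \to \mathbb{C}^{d \times d}$, $A \mapsto A C_{m,n} A^\top$, with $C_{m,n} = S(\Axis^{m,n})$. Since $mn \leq d$, a generic $A$ has full column rank, and can be extended to an invertible $\tilde A = (A \mid B) \in \GL_d(\mathbb{C})$. Writing $\tilde C := C_{m,n} \oplus 0_{d-mn}$, one checks immediately that $\Phi(A) = \tilde A \, \tilde C \, \tilde A^\top$. Because $\GL_d$ is Zariski dense in $\mathbb{C}^{d\times d}$, this shows that $\mathcal{M}_{d,m,n}$ coincides with the Zariski closure of the congruence orbit $\{P \tilde C P^\top : P \in \GL_d(\mathbb{C})\}$.

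Next, I would assemble the canonical form of $\tilde C$ under congruence. By \Cref{lmm:sig normal form} the canonical form of $C_{m,n}$ is given explicitly in each parity case, and adjoining the zero summand $0_{d-mn}$ simply adds $d - mn$ blocks of type $J_1(0)$ (in the notation of \cite{TERAN201144}). So the canonical form of $\tilde C$ is entirely explicit: it is a direct sum of $\Gamma_k$ blocks, $H_{2k}(\mu)$ blocks, and $J_1(0)$ blocks whose multiplicities depend on the parities of $m$ and $n$.

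Now I would apply the main orbit-dimension result of \cite{TERAN201144}, which expresses the codimension of the congruence orbit of a matrix in canonical form as a sum of contributions coming from each individual block together with interaction terms for every ordered pair of blocks. Concretely, the dimension of the orbit is $d^2 - c(\tilde C)$, where $c(\tilde C)$ is this combinatorial sum. Since $\mathcal{M}_{d,m,n}$ is the closure of the orbit, it has the same dimension. The computation then splits into three parity cases exactly matching the statement of \Cref{thm:pw-lin matr dim}.

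The main obstacle will be the bookkeeping in the last step: the formulas of \cite{TERAN201144} assign a contribution to each block and, more delicately, to each pair of blocks (including pairs of the form $(\Gamma_k, J_1(0))$ and $(H_{2k}(\mu), J_1(0))$ for the extra zero blocks). I would organize the sum as $d^2 - c_0 - (d-mn)\,c_1$, where $c_0$ collects the contributions internal to the canonical form of $C_{m,n}$ (independent of $d$) and $c_1$ gives the linear-in-$(d-mn)$ contribution coming from interactions with each $J_1(0)$ block. A short check shows that $c_1 = d + mn - \textit{(a constant depending on the parity of $m,n$)}$, so the $d$-dependence reduces to the single term $dmn - \tfrac{1}{2}m^2n^2$ common to all three expressions in the statement. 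After carrying out this calculation in each of the three parity cases and simplifying, the three expressions of the corollary fall out; the case $m$ odd, $n$ even is identical by the symmetry $C_{m,n}^\top$ is congruent to $C_{n,m}$, which follows from $\Axis^{m,n}$ being the product of $\Axis^m$ and $\Axis^n$.
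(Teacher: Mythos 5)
Your proposal is correct and follows essentially the same route as the paper: identify $\mathcal{M}_{d,m,n}$ with the closure of the congruence orbit of $C_{m,n} \oplus 0_{d-mn}$, read off its canonical form from \Cref{lmm:sig normal form} (augmented by $d-mn$ blocks $J_1(0)$), and plug into the codimension formula of \cite[Theorem 2]{TERAN201144}; the paper's proof is precisely this, stated in one line. The only wrinkle is that your bookkeeping ansatz $d^2 - c_0 - (d-mn)c_1$ with $c_1$ itself depending on $d$ is a slightly awkward way to absorb the quadratic-in-$(d-mn)$ interaction between the $J_1(0)$ blocks, but the claimed cancellation to $dmn - \tfrac12 m^2n^2$ does work out.
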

\begin{proof}
    This follows from \Cref{lmm:sig normal form} by using the codimension count in \cite[Theorem 2]{TERAN201144}.
\end{proof}

\begin{remark}
    In light of \Cref{lmm:membrane vs path signature}, one checks that for $n=1$ the polynomials in \Cref{thm:pw-lin matr dim} simplify to $md - \binom{m}{2}$, which is indeed the dimension of the affine cone of the path signature matrix varieties considered in \cite[Theorem 3.4]{bib:AFS2019}.
\end{remark}

\begin{corollary}\label{thm:rks of sk and sym}\label{rem:dim bound}
Let $\mc S_{a,b}$ be the variety of $d\times d$ matrices whose symmetric part has rank $\leq a$ and whose skew-symmetric part has rank $\leq b$. Then
\begin{enumerate}
    \item $\mc S_{(m-2)(n-2), m+n-2} \subseteq \mc M_{d,m,n} \subseteq \mc S_{mn,m+n-2}$ if $m,n$ are even,
    \item $\mc S_{2, n-1} \subseteq \mc M_{d,2,n} \subseteq \mc S_{2(n-1)+1, n + 1}$ if $n$ is odd,
    \item $\mc S_{(m-2)(n-1) - 1, m+n-1} \subseteq \mc M_{d,m,n} \subseteq \mc S_{m(n-1)+1, m+n-1}$ if $m\geq 4$ is even, $n$ is odd,
    \item $\mc S_{(m-1)(n-1) + 1,m+n-2} = \mc M_{d,m,n}$ if $m,n$ are odd.
\end{enumerate}
\end{corollary}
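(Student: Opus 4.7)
The approach is to exploit congruence invariance of the symmetric and skew‐symmetric ranks: if $M = A C A^\top$ with $C = C_{m,n}$, then $\mathrm{Sym}(M) = A\,\mathrm{Sym}(C)\,A^\top$ and $\mathrm{Skew}(M) = A\,\mathrm{Skew}(C)\,A^\top$, so their ranks are bounded above by those of $\mathrm{Sym}(C_{m,n})$ and $\mathrm{Skew}(C_{m,n})$. I would compute the latter two ranks via the canonical decomposition in Lemma~\ref{lmm:sig normal form}, by summing the symmetric and skew ranks of each canonical block. For a non‐singular $M$ with cosquare $J := M^{-\top}M$, the identity $M \pm M^\top = M^\top(J \pm I)$ gives $\mathrm{rank}(M \pm M^\top) = \mathrm{rank}(J \pm I)$. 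Applied to $\Gamma_k$ (cosquare $J_k((-1)^{k+1})$), this produces symmetric rank $k$ and skew rank $k-1$ when $k$ is odd, and $k-1$ and $k$ respectively when $k$ is even. For $H_{2k}(\mu)$ (cosquare $J_k(\mu) \oplus J_k(\mu^{-1})$), both ranks equal $2k$, except that the symmetric rank drops to $2(k-1)$ at $\mu = -1$ and the skew rank drops to $2(k-1)$ at $\mu = 1$. Summing these contributions in each of the four cases of Lemma~\ref{lmm:sig normal form} reproduces exactly the rank pairs appearing in the claimed upper containments.

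For the lower containments, the clean case is $m, n$ both odd, where the canonical form $\Gamma_1^{\oplus (m-1)(n-1)+1} \oplus H_2(-1)^{\oplus (m+n-2)/2}$ is the direct sum of a pure‐symmetric block (the identity, via $\Gamma_1 = (1)$) and a pure‐skew block. Given any $M = M_s + M_k \in \mc S_{(m-1)(n-1)+1,\,m+n-2}$, one factors $M_s = A_s A_s^\top$ (possible over $\mathbb C$ for any symmetric matrix of rank at most $(m-1)(n-1)+1$) and $M_k = A_k\bigl(H_2(-1)^{\oplus (m+n-2)/2}\bigr)A_k^\top$ independently and concatenates them into a single $A$, yielding equality. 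In the remaining (even) cases the $\Gamma_1$‐blocks realize the symmetric part analogously, and an isotropic‐subspace trick handles most of the skew contribution: $\mathrm{Sym}(H_4(1))$ is non‐degenerate of rank $4$ on $\mathbb C^4$ and admits a maximal two‐dimensional isotropic subspace, on which $\mathrm{Skew}(H_4(1))$ restricts to a rank‐$2$ form; restricting the columns of $A$ acting on each $H_4(1)$‐block to such a subspace yields $m+n-4$ units of pure skew rank without any symmetric contribution.

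The hardest step is extracting the remaining two units of skew rank from the $\Gamma_3$‐block in the even–even case (or analogously from the $\Gamma_2$‐block when $m$ is even and $n$ is odd). Since $\mathrm{Skew}(\Gamma_3)$ has a one‐dimensional kernel inside $\mathbb C^3$, restricting $A$ to any two‐dimensional column subspace annihilates the skew form; achieving skew rank $2$ thus forces $A$ of full column rank $3$, which necessarily produces a symmetric contribution of rank up to $3$. I would resolve this by absorbing the unwanted symmetric part into the $\Gamma_1$‐contribution: choose the column image of $A$ on the $\Gamma_3$‐block inside $\mathrm{range}(M_s)$ whenever $\mathrm{rank}(M_s) \geq 3$, and otherwise pass to a limit within the Zariski closure $\mc M_{d,m,n}$, scaling the relevant columns of $A$ so that the symmetric contribution from $\Gamma_3$ vanishes while the rank‐$2$ skew contribution persists. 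Together with the free parameters of the $\Gamma_1$‐blocks, this should realize every matrix in $\mc S_{(m-2)(n-2),\,m+n-2}$, completing the lower containment.
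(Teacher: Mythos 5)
Your upper-bound argument via the cosquare identity $\mathrm{rank}(M \pm M^\top) = \mathrm{rank}(J \pm I)$ is correct, and in fact cleaner than the paper's direct rank bookkeeping on the canonical blocks; the two computations agree, but yours makes the block-by-block contribution more transparent. Your lower bound in the odd--odd case is also essentially the paper's argument and is fine.

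The genuine gap is in your treatment of the $\Gamma_3$ (resp.\ $\Gamma_2$) block in the even cases. The proposed scaling trick cannot work. Writing the contribution of this block as $B \Gamma_3 B^\top$ for a $d\times 3$ matrix $B$, any degeneration $B \mapsto BQ$ sends the symmetric and skew parts to $B(Q\,\mathrm{Sym}(\Gamma_3)\,Q^\top)B^\top$ and $B(Q\,\mathrm{Skew}(\Gamma_3)\,Q^\top)B^\top$. Since $\mathrm{Sym}(\Gamma_3)$ is a nondegenerate form on $\mb C^3$, its isotropic subspaces have dimension at most $1$; hence if $Q\,\mathrm{Sym}(\Gamma_3)\,Q^\top \to 0$ then the column span of $Q$ must collapse to dimension at most $1$ in the limit, which forces $Q\,\mathrm{Skew}(\Gamma_3)\,Q^\top \to 0$ as well. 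So the symmetric contribution cannot be killed while a rank-$2$ skew contribution survives. Your alternative idea of forcing the column span of $B$ inside the range of $M^\mathsf{sym}$ fails too: that would also force the range of $B\,\mathrm{Skew}(\Gamma_3)\,B^\top$ inside the range of $M^\mathsf{sym}$, which is incompatible with a general skew-symmetric $M^\mathsf{sk}$. The paper's fix is to not treat $\Gamma_3$ in isolation but to pair it with one $\Gamma_1$-block: on $\Gamma_3 \oplus \Gamma_1$ the symmetric part is a nondegenerate rank-$4$ form on $\mb C^4$, which does admit $2$-dimensional isotropic subspaces, and a generic one is transverse to the $2$-dimensional kernel of the skew part --- exactly your isotropic-subspace trick, applied to the correct $4\times 4$ block. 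This costs one $\Gamma_1$, which is why the symmetric rank in the lower bound of case (1) is $(m-2)(n-2)$ rather than $(m-2)(n-2)+1$. Finally, note that your outline does not treat the case $m=2$, $n$ odd, where the normal form contains no $\Gamma_1$-blocks at all; there the paper instead pairs $\Gamma_2$ with one $H_4(1)$-block (at the cost of two units of symmetric rank), producing the weaker bound $\mc S_{2,n-1}$ in case (2).
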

\begin{proof}
    For the upper bounds, we just need to compute the ranks of the symmetric and skew-symmetric parts of the normal forms in \Cref{lmm:sig normal form}.
    \begin{itemize}
        \item  For $m,n$ even the rank of $C_{m,n}^\mathsf{sym}$ is $(m-2)(n-2) + 1 + 2m + 2n - 8 + 3 = mn$. The rank of $C_{m,n}^\mathsf{sk}$ is $2 + m + n - 4 = m + n - 2$.
        \item For $m$ even, $n$ odd the rank of $C_{m,n}^\mathsf{sym}$ is $1 + 2(n-1) + (m-2)(n-1) = m(n-1) + 1$. The rank of $C_{m,n}^\mathsf{sk}$ is $2 + n - 1 + m - 2 = m + n - 1$.
        \item For $m, n$ odd the rank of $C_{m,n}^\mathsf{sym}$ is $(m-1)(n-1) + 1$. The rank of $C_{m,n}^\mathsf{sk}$ is $m + n - 2$.
    \end{itemize}
    For the lower bound, note that for a general matrix $X \in \mc S_{a,b}$ there are $S,T \in \GL_d(\mb C)$ with $S^\top X^{\mathsf{sym}} S = \Gamma_1^{\oplus a'}$ and $T^\top X^{\mathsf{sk}} T = H_2(-1)^{\oplus b'}$ where $a' \leq a$ and $b' \leq \frac b 2$. Thus, to give a lower bound, one can count blocks in the normal form of $C_{m,n}$ that are congruent to $H_2(-1)$ and can thus contribute to the skew-symmetric rank, such that all other blocks are congruent to $\Gamma_1$ and can contribute to the symmetric rank.\par
    For $m,n$ even, we have that $\Gamma_3 \oplus \Gamma_1$ is congruent to $H_2(-1)$, as is $H_4(1)$. Thus, every general skew-symmetric matrix of rank $2 + m + n - 4$ is congruent to $\Gamma_3 \oplus H_4(1)^{\oplus \frac{m+n -4} 2} \oplus \Gamma_1$. As every rank $(m-2)(n-2)$ symmetric matrix is congruent to $\Gamma_1^{\oplus (m-2)(n-2)}$ we conclude. \par
    For $m = 2$, $n$ odd, we have that $\Gamma_2 \oplus H_4(1)$ is congruent to $H_2(-1) \oplus \Gamma_1^{\oplus 2}$. Every general skew-symmetric matrix of rank $2 + n - 3 = n - 1$ is congruent to $H_2(-1) \oplus H_4(1)^{\oplus \frac{n-1} 2}$ and every symmetric matrix of rank $2$ is congruent to $\Gamma_1^{\oplus 2}$ and we conclude. \par
    For $m$ even, $n$ odd, and $m\geq 4$ we know that $\Gamma_2 \oplus \Gamma_1$ is congruent to $H_2(-1)$; thus every general skew-symmetric matrix of rank $2 + n - 1 + m - 2 = m+n-1$ is congruent to $\Gamma_2 \oplus H_4(1)^{\oplus \frac{n-1} 2} \oplus H_2(-1)^{\oplus \frac{m-2} 2} \oplus \Gamma_1$; and we conclude again.\par
    By the same reasoning, we immediately see that for $m,n$ odd we just obtain the variety $\mc S_{(m-1)(n-1) + 1,m+n-2}$ as a lower bound as well and deduce equality with $\mc M_{d,m,n}$.
\end{proof}

Note that for $m=1$ or $n=1$ we obtain a rank one symmetric part in accordance with the path framework,  \cite[eq. (4)]{bib:AFS2019}. Moreover, the corollary shows for all cases except $m=2$, $n$ odd, that the projection of matrices in $\mc M_{d,m,n}$ to the skew-symmetric part gives precisely the variety of skew-symmetric matrices of rank $\leq m+n-1$ (if $m+n$ is odd) or $\leq m+n-2$ (if $m+n$ is even). 


In the case where $m,n$ are odd, we can describe the variety precisely:
\begin{corollary}\label{cor:deg m n odd}
    The degree of $\mc M_{d,m,n}$ is
    $$\frac{1}{2^{d - (m + n) + 2}}
    \prod_{\alpha = 0}^{d-(m-1)(n-1)} \frac{ \binom{d + \alpha}{d- (m-1)(n-1) + 1 - \alpha}} {\binom{2\alpha+1}{\alpha}}
    \prod_{\alpha = 0}^{d-(m+n)} \frac{ \binom{d + \alpha}{d-(m+n) + 1 - \alpha}} {\binom{2\alpha+1}{\alpha}}.$$
\end{corollary}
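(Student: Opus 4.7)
The plan is to reduce the degree computation to classical formulas for determinantal varieties of symmetric and skew-symmetric matrices. The key leverage comes from the preceding \Cref{thm:rks of sk and sym}, which in the odd case gives the exact equality $\mc M_{d,m,n}=\mc S_{(m-1)(n-1)+1,\,m+n-2}$. Under the linear decomposition $\mathbb C^{d\times d}\cong\mathrm{Sym}_d\oplus\mathrm{Sk}_d$ via $M\mapsto(M^\mathrm{sym},M^\mathrm{sk})$, the variety $\mc S_{a,b}$ splits as the Cartesian product $\mathrm{Sym}_d^{\leq a}\times\mathrm{Sk}_d^{\leq b}$ of two classical determinantal varieties, each sitting inside a complementary linear subspace of $\mathbb C^{d\times d}$.

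The next step is to show that for subvarieties $X\subseteq V$ and $Y\subseteq W$ of complementary linear summands of $V\oplus W$, one has $\deg(X\times Y)=\deg X\cdot \deg Y$. Each factor lifts to a cylinder $\pi_V^{-1}(X)$ or $\pi_W^{-1}(Y)$ of the same degree as its base, and the two cylinders meet properly and transversely along $X\times Y$, so B\'ezout applies. Combined with the irreducibility of $\mc M_{d,m,n}$ from \Cref{prop:irred_Vars} and the dimension count from \Cref{thm:pw-lin matr dim} (which excludes spurious embedded components), this yields
\[
\deg\mc M_{d,m,n}\;=\;\deg\bigl(\mathrm{Sym}_d^{\leq(m-1)(n-1)+1}\bigr)\cdot\deg\bigl(\mathrm{Sk}_d^{\leq m+n-2}\bigr).
\]

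It then remains to insert the classical J\'ozefiak--Harris--Tu degree formulas for symmetric and Pfaffian determinantal varieties. The symmetric factor contributes directly one of the two binomial products in the statement, after aligning the index conventions. The skew-symmetric factor uses the Pfaffian formula, which relates to its symmetric analogue through the identity $\det=\mathrm{Pf}^2$ on skew matrices: each unit by which the Pfaffian rank falls below the maximum contributes a factor of $2$, cumulatively producing the denominator $2^{d-(m+n)+2}$ together with the second binomial product. The principal technical obstacle is precisely this reconciliation of conventions and the careful tracking of the factors of $2$ arising from the Pfaffian square-root relation; the high-level structure of the argument is otherwise transparent once the product decomposition of $\mc S_{a,b}$ is in hand.
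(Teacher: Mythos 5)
Your proposal follows the same route as the paper: reduce to the exact identification $\mc M_{d,m,n} = \mc S_{(m-1)(n-1)+1,\,m+n-2}$ from \Cref{thm:rks of sk and sym}, observe that $\mc S_{a,b}$ is the transverse intersection of the symmetric-rank locus and the skew-symmetric-rank locus (so degrees multiply), and then quote the Harris--Tu degree formulas for symmetric and Pfaffian determinantal varieties. The only difference is expository (you unpack the transverse intersection as a Cartesian product of cones in complementary linear subspaces and give the cylinder/B\'ezout argument explicitly), plus a non-load-bearing heuristic about deriving the power of $2$ from $\det=\mathrm{Pf}^2$ which is not how the Pfaffian degree formula is actually obtained in Harris--Tu, but this does not affect the validity of the proof since you ultimately cite the formula directly.
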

\begin{proof}
    This follows from \Cref{thm:rks of sk and sym} since $\mc M_{d,m,n} = \mc S_{(m-1)(n-1) + 1, m + n -2}$. Indeed, for $m,n$ odd, $m + n \leq d$, $\mc S_{(m-1)(n-1) + 1, m + n -2}$ is the transverse intersection of the vanishing locus of the $((m-1)(n-1) + 2)$-minors of the symmetric part with the vanishing locus of the $(m + n)$-Pfaffians of the skew-symmetric part. The degree of these two vanishing loci is calculated in \cite[Proposition 12] {HARRIS198471}.
\end{proof}

\begin{example}
    According to \Cref{thm:pw-lin matr dim} and \Cref{cor:deg m n odd} the variety $\mc M_{6,3,3}$ has dimension $34$ and degree $18$. It is a complete intersection of the vanishing locus of the Pfaffian of the skew-symmetric part and the vanishing locus of the determinant of the symmetric part.
\end{example}
\begin{example}\label{ex:berndspoly}
    Consider the case $d=4$ and $m=n=2$. By \Cref{thm:rks of sk and sym}, we have $\mc S_{0, 2} \subseteq \mc M_{d,m,n} \subseteq \mc S_{4,2}$. In particular, the Pfaffian $x_{12}x_{34}-x_{13}x_{24} + x_{14}x_{23}$ vanishes on the skew-symmetric part of matrices in the congruence orbit. However, according to \Cref{thm:pw-lin matr dim} the dimension of $\mc M_{4,2,2}$ is 14, so there must be additional relations on the symmetric part. Indeed, another relation is given by $\det(A)\det(X^\mathsf{sym}) = \det(X) \det(A^\mathsf{sym})$: the ratio $\det(X)/\det(X^\mathsf{sym})$ is well-defined and clearly constant on the congruence orbit. Using $\textsc{Macaulay 2}$ one checks that the variety $\mc M_{4,2,2}$ is a complete intersection, cut out by those two equations.
\end{example}

It is immediate from \Cref{thm:rks of sk and sym} that $\mc M_{d,m,n}$ attains the full ambient dimension $d^2$ when $m,n \to \infty$. Our main theorem gives a precise bound:

\begin{theorem}\label{thm:mainResult}
    For $m,n \geq 2$, we have $\mc M_{d,m,n} = \mb C^{d\times d}$ whenever $m+n \geq d+1$.
\end{theorem}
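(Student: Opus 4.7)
By \Cref{prop:irred_Vars}, $\mathcal M_{d,m,n}$ is irreducible and contained in $\mathbb C^{d\times d}$, so it suffices to show $\dim \mathcal M_{d,m,n} = d^2$. Equivalently, I would produce a single point $A_0\in\mathbb C^{d\times mn}$ at which the differential
\begin{equation*}
    d\psi_{A_0}(B)\;=\;B\,C_{m,n}\,A_0^\top + A_0\,C_{m,n}\,B^\top
\end{equation*}
of $\psi(A) = AC_{m,n}A^\top$ is surjective onto $\mathbb C^{d\times d}$.

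The cleanest subcase is $m,n$ both odd: then \Cref{thm:rks of sk and sym} already supplies an \emph{equality} $\mathcal M_{d,m,n}=\mathcal S_{(m-1)(n-1)+1,\,m+n-2}$, and a short parity count finishes the job. Indeed, under $m+n\geq d+1$, the value $m+n-2$ meets the maximal possible skew-symmetric rank of a $d\times d$ matrix in every compatible parity of $d$, and $(m-1)(n-1)+1\geq d$ follows from the estimate $(m-1)(n-1)\geq(m-1)+(n-1)=m+n-2\geq d-1$, valid for $m,n\geq 3$. In the three remaining cases of \Cref{thm:rks of sk and sym} (namely $m,n$ even; $m=2$, $n$ odd; and $m\geq 4$ even, $n$ odd), the theorem only supplies an upper bound $\mathcal M_{d,m,n}\subseteq\mathcal S_{a,b}$, which a similar parity bookkeeping shows equals $\mathbb C^{d\times d}$ under $m+n\geq d+1$. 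The actual content is therefore to upgrade this inclusion to an equality.

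For these cases I would invoke the canonical form from \Cref{lmm:sig normal form}: since the image of $\psi$ depends only on the congruence class of $C_{m,n}$, one may replace $C_{m,n}$ by the explicit direct sum $N_{m,n}$ of $\Gamma_k$- and $H_{2k}(\mu)$-blocks. The strategy is then to distribute the $d$ rows of a candidate $A_0$ among the blocks of $N_{m,n}$ so that $A_0\,N_{m,n}\,A_0^\top$ realizes a matrix of symmetric rank $d$ and maximal skew-symmetric rank, and to verify directly that $d\psi_{A_0}$ has rank $d^2$. The main obstacle is precisely this final rank verification: because the symmetric and skew-symmetric parts of $d\psi_{A_0}(B)$ couple through the same $A_0$, the check has to be carried out separately in each of the three parity regimes, following the block structures of \Cref{lmm:sig normal form}. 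An alternative route would sidestep the Jacobian altogether by arguing directly that every $d\times d$ matrix inside $\mathcal S_{a,b}$ arises, up to congruence, as the restriction of $N_{m,n}$ to some $d$-dimensional subspace of $\mathbb C^{mn}$ --- a finite verification informed by the Horn--Sergeichuk classification of canonical forms.
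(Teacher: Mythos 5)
Your handling of the $m,n$ both odd case is correct and matches the paper's: \Cref{thm:rks of sk and sym} gives the equality $\mathcal M_{d,m,n}=\mathcal S_{(m-1)(n-1)+1,\,m+n-2}$, and for $d=m+n-1$ (odd), every $d\times d$ skew-symmetric matrix has rank $\leq m+n-2$, while $(m-1)(n-1)+1\geq d$ follows from $(m-2)(n-2)\geq 1$. (You should first state the reduction to $d=m+n-1$ explicitly, as the paper does; it holds because a surjection $A\mapsto AC_{m,n}A^\top$ onto $\mathbb C^{d\times d}$ yields one onto $\mathbb C^{d'\times d'}$ for $d'<d$ by dropping rows of $A$.)

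For the remaining three parity regimes, you identify the right object (a point $A_0$ where the differential $B\mapsto BC_{m,n}A_0^\top+A_0C_{m,n}B^\top$ is surjective) and the right reduction (replace $C_{m,n}$ by its congruence canonical form from \Cref{lmm:sig normal form}), but you do not carry out the rank verification: you explicitly defer it as ``the main obstacle,'' to be done ``separately in each of the three parity regimes.'' That deferred step \emph{is} the theorem in these cases, so this is a genuine gap. Moreover, it is not clear that a single closed-form $A_0$ works uniformly in $(m,n,d)$; the paper sidesteps exactly this by inducting on $m+n$. Concretely, the paper checks the Jacobian only at two explicit base points, for $(m,n,d)=(2,2,3)$ and $(2,3,4)$, and then uses the block decompositions $C_{m,n}=C_{m-2,n}\oplus(\text{small block})\oplus\Gamma_1^{\oplus\ell}$ (resp.\ the $n$-decrement) together with a computed fiber-dimension count for the small-block congruence map $\mathbb C^{4\times 2}\to\mathbb C^{2\times 2}$ to propagate full-dimensionality from $(d,m{-}2,n)$ or $(d,m,n{-}2)$ to $(d{+}2,m,n)$, noting that a general $(d{+}2)\times(d{+}2)$ matrix decomposes (up to congruence) as $D\oplus E$ with $D$ general $d\times d$ and $E$ general $2\times 2$. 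Your proposal would need to supply either such an induction or an actual uniform family of $A_0$'s with a completed rank argument; your alternative ``restriction to a $d$-dimensional subspace'' route is likewise only a sketch and not obviously a finite check.
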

\begin{proof}
     Clearly, it suffices to prove the statement for $d = m + n -1$. We make a case distinction for the different parities of $m$ and $n$. If $m$ and $n$ are odd, then we have $(m-1)(n-1) + 1 \geq m + n -1$. Moreover, $d = m + n - 1$ is odd, so that every $d\times d$ matrix has skew-symmetric rank $\leq m + n - 2$. Thus, we immediately conclude by \Cref{thm:rks of sk and sym}.\\
    We prove the statement for the remaining cases by induction on $m+n$. We settle the base cases by explicit linear algebra calculations. \par
    \textit{The base case.}
    Consider the map $\phi: \mb C^{mn \times d} \to \mb C^{d \times d}$ given by $A \mapsto A^\top C_{m,n} A$. Let us write $\phi_B'$ for its derivative $$\mb C^{mn \times d} \to \mb C^{d \times d}, A \mapsto B^\top C_{m,n} A + A^\top C_{m,n} B$$ at a matrix $B \in \mb C^{mn \times d}$. It suffices to give a matrix $B$ where $\phi_B'$ has full rank.\par
    The base case for $m, n$ even is $m=n=2$, $d = 3$. Then the canonical form of $A_{2,2}$ is $\Gamma_3 \oplus J_1(1)$. We compute that for $B = \begin{pmatrix}
        \mathrm{I}_d & \mathbbm{1}
    \end{pmatrix}$, $\phi_B'$ has full rank.\par
    The base case for $m$ even, $n$ odd is $m=2$, $n=3$, $d = 4$. We compute that for $B = \begin{pmatrix}
        \mathrm{I}_d & \mathbbm{1} & \mathbbm{1} 
    \end{pmatrix}$, $\phi_B'$ has full rank.\par
    \textit{The induction step.} Assume the statement is true for $m + n = d + 1$ for some $d$. We claim that it then also holds for $m + n = d + 3 = (d + 2) + 1$, i.e.\, $\mc M_{d+2,m,n} = \mb C^{d+2,d+2}$. By assumption, we have $\mc M_{d,m,n-2} = \mb C^{d \times d}$ or $\mc M_{d,m-2,n} = \mb C^{d \times d}$. A general $\mb (d+2) \times (d+2)$-matrix has normal form $\oplus_{i \leq k} H_2(\mu_i) \oplus \Gamma_1^{\oplus l}$ where $2 k + l = d + 2$ with $l \in \{0,1\}$ and $\mu_i \in \mb C$. In particular, wlog. we can replace a general $(d+2)\times (d+2)$ matrix by a matrix $D \oplus E$ for $D$ a general $d \times d$-matrix and $E$ a general $2 \times 2$-matrix.\par
    If $m$ and $n$ are even and wlog. $m \geq 4$, then we have $C_{m,n} = C_{m-2,n} \oplus H_4(1) \oplus \Gamma_1^{\oplus 2(n-2) + 1}$. A calculation in \textsc{Macaulay 2} shows that generically the fibers of the map $\mb C^{4 \times 2}\to \mb C^{2 \times 2}, A \mapsto A^\top H_4(1) A$ are $5$-dimensional.\par
    If $m$ is even and $n$ is odd, then if $m \geq 4$ we have $C_{m,n} = C_{m-2,n} \oplus H_2(-1) \oplus \Gamma_1^{\oplus 2(n-1)}$. Again, a calculation in \textsc{Macaulay 2} shows that generically the fibers of the map $\mb C^{4 \times 2}\to \mb C^{2 \times 2}, A \mapsto A^\top (H_2(-1) \oplus \Gamma_1^{\oplus 2}) A$ are $5$-dimensional. If $n \geq 5$, then $C_{m,n} = A_{m,n-2} \oplus H_4(1) \oplus \Gamma_1^{\oplus 2(m-2)}$ and we conclude as in the case $m,n$ even.\par
    Finally, note that for matrices $A, B, X, Y$ we have $(A \oplus B)^\top (X \oplus Y) (A \oplus B) = A^\top X A \oplus B^\top Y B$. Thus by the induction hypothesis we conclude in all cases that $\mc M_{d+2,m,n} = \mb C^{d + 2, d+ 2}$.
\end{proof}

For $m=1$ or $n=1$ we obtain a rank one symmetric part according to the path framework,  \cite[eq. (4)]{bib:AFS2019}. \par

\begin{figure}[h]
    \centering
    \begin{subfigure}{0.27\textwidth}
        \centering
        $\input{dims_matrices_d/dims_d4}$
        \caption*{$d=4$}
    \end{subfigure}
    \begin{subfigure}{0.31\textwidth}
        \centering
        $\input{dims_matrices_d/dims_d5}
        $
        \caption*{$d=5$}
    \end{subfigure}
    \begin{subfigure}{0.37\textwidth}
        \centering
        $\input{dims_matrices_d/dims_d6}
        $
        \caption*{$d=6$}
    \end{subfigure}
\vspace{0.2cm}
            \centering
            \vspace{0.2cm}
    \begin{subfigure}{0.45\textwidth}
        \centering
        $\input{dims_matrices_d/dims_d7}
        $
        \caption*{$d=7$}
    \end{subfigure}
    \begin{subfigure}{0.48\textwidth}
        \centering
        $\input{dims_matrices_d/dims_d8}
        $
        \caption*{$d=8$}
    \end{subfigure}

    \caption{The dimensions $\dim(\mathcal{M}_{d,m,n})$ stored in $d\times d$ matrices for varying $d$, where bold numbers are \emph{not} covered by \Cref{thm:mainResult} or \Cref{thm:pw-lin matr dim}. We omit the values once the full dimension has been reached. We compute these remaining dimensions according to \Cref{rem:explicitCompForMissingDims}.}
    \label{fig:dim_matrices}.
\end{figure}

\begin{remark}\label{rem:explicitCompForMissingDims}
    Some choices of $m,n,d$ are neither covered by \Cref{thm:mainResult} nor by \Cref{thm:pw-lin matr dim}. Using \textsc{Macaulay 2} with \texttt{NumericalImplicitization} \cite{chen2019numerical}, we computed the missing dimensions of $\mc M_{d,m,n}$ for $m,n \leq d$. We summarize our findings in \Cref{fig:dim_matrices}.
\end{remark}

\begin{example}\label{ex:bensPolyWith100Terms}
    Due to \Cref{fig:dim_matrices}, $\mc M_{5,3,2}$ is a hypersurface in $\mb R^{5\times 5}$. Using the packages \texttt{NumericalImplicitization} and \texttt{MultigradedImplicitization} \cite{cummings2025computing}, one observes that this hypersurface is cut out by a single quintic $f$ with $999$ terms. One checks that $f \in \mathsf{minors}(3,X) \cap \mathsf{minors}(4,X^\mathsf{sk}) \cap \mathsf{minors}(1,X^\mathsf{sym})$, where $\mathsf{minors}(i,X)$ denotes the ideal of $i$-minors of $X$ from the congruence orbit.
\end{example}

\section{Outlook and future work}\label{sec:outlook}

Our notion of signature was introduced by \cite{GLNO2022,diehl2024signature} under the name of \emph{$\mathsf{id}$-signature}, see \Cref{rem:sig_versions} for a detailed discussion. 
Both references observe that this $\mathsf{id}$-signature is not universal.
We prove that in general the signature matrix of a membrane does not satisfy \emph{any} algebraic relations, so in particular, no shuffle relations. 
This proof relies on matrix theory and is not generalizable for tensors in an obvious way. We conjecture that our result is true for all signature tensors.

\begin{conjecture}
\Cref{thm:mainResult} holds for arbitrary level $k\geq 3$.
\end{conjecture}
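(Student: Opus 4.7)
The plan is as follows. First, by equivariance (\Cref{lmm:coord-free equivariance}) and the fact that $\mathsf{Mom}^{m,n}$ is a dictionary for polynomial membranes of order $(m,n)$, the variety $\mc M^{(k)}_{d,m,n}$ should be realized as the Zariski closure of the image of
$$\phi^{(k)}_{m,n}: \mb C^{d \times mn} \to (\mb C^d)^{\otimes k}, \quad A \mapsto [\![C^{(k)}_{m,n}; A, \ldots, A]\!],$$
where $C^{(k)}_{m,n} := \sigma^{(k)}(\mathsf{Mom}^{m,n}) = \sigma^{(k)}(\mathsf{Mom}^{m}) \otimes \sigma^{(k)}(\mathsf{Mom}^{n})$ by \Cref{prop:product membranes} and \Cref{ex:closed_form_k3}. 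The conjecture is then equivalent to the dominance of $\phi^{(k)}_{m,n}$ for $m,n$ sufficiently large relative to $d$ and $k$.

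Second, to establish dominance I would exhibit a single point $A_0$ at which the differential
$$d\phi^{(k)}_{A_0}(B) \;=\; \sum_{i=1}^k [\![C^{(k)}_{m,n};\, A_0, \ldots, \underset{i}{B}, \ldots, A_0]\!]$$
is surjective, which by generic smoothness is enough. I would take $A_0$ of block-Kronecker shape compatible with the factorization $C^{(k)}_{m,n} = C^{(k)}_m \otimes C^{(k)}_n$, so that the differential splits into terms one can control using the path picture of \cite{bib:PSS2019,GALUPPI2019282} together with cross-terms mixing the two factors. An induction on $d$ would then proceed in the spirit of \Cref{thm:mainResult}: establish a base case for small $d$ by explicit symbolic or numerical computation (extending the Macaulay2 verifications used in \Cref{rem:explicitCompForMissingDims} and \Cref{ex:bensPolyWith100Terms}), and in the induction step show that any enlargement of $d$ is absorbed by enlarging $m$ or $n$, with the additional tangent directions supplied by the Kronecker cross-terms; these play the role analogous to the $H_4(1)$ and $\Gamma_3$ blocks that provide the slack in \Cref{lmm:sig normal form}.

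The chief obstacle is precisely the one flagged in the paragraph preceding the conjecture: the matrix proof rests on the canonical form theory for congruence of \cite{TERAN201144,HORN20061010}, and no analogue is known for the action $A \mapsto A^{\otimes k}$ on $(\mb C^{mn})^{\otimes k}$ when $k \geq 3$. Without such a classification, the blockwise bookkeeping that drives \Cref{thm:mainResult} has to be replaced by direct polynomial computations with the entries of $C^{(k)}_{m,n}$, and verifying surjectivity of $d\phi^{(k)}_{A_0}$ at a carefully chosen $A_0$ is likely to require a delicate combinatorial analysis of the contractions of the moment tensor. A secondary subtlety is the quantitative bound: the naive parameter count $d \cdot mn \geq d^k$ rules out a direct analogue of the inequality $m + n \geq d+1$ from the $k=2$ case, so one should aim at a weaker stabilization statement of the form $mn \gtrsim d^{k-1}$, and identifying even the correct order of growth rigorously may itself turn out to be nontrivial.
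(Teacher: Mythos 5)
This statement is a conjecture that the paper deliberately leaves open — there is no proof in the paper to compare against, only the computational evidence of \Cref{fig:level3dim} for $d=k=3$. Your proposal is therefore not a proof, and you correctly say as much: you sketch the natural strategy (realize $\mc M^{(k)}_{d,m,n}$ as the closure of the image of $A \mapsto [\![C^{(k)}_{m,n}; A, \ldots, A]\!]$, then show the differential is surjective at a well-chosen $A_0$ and bootstrap by an induction in $d$), and you identify the genuine obstruction exactly as the authors do in the sentence preceding the conjecture: \Cref{thm:mainResult} rests on the canonical form for congruence of \cite{HORN20061010,TERAN201144}, and no analogue of this normal-form theory exists for the orbit of the diagonal action $A \mapsto A^{\otimes k}$ on $(\mb C^{mn})^{\otimes k}$ when $k \geq 3$. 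Because that block-by-block bookkeeping (the $\Gamma_k$, $H_{2k}(\mu)$ decomposition of $C_{m,n}$ in \Cref{lmm:sig normal form}) is precisely what drives both the induction step and the dimension formulas in the $k=2$ case, there is no known machinery to replace it, and your "delicate combinatorial analysis of contractions of the moment tensor" is indeed where the argument stalls.

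Two minor remarks on what you do contribute. Your dimension-count observation is correct and consistent with the data: the map has $dmn$ parameters targeting $d^k$ coordinates, so a necessary condition for dominance is $mn \geq d^{k-1}$; the entries in \Cref{fig:level3dim} for $d=3$, $k=3$ saturate exactly at $(m,n)=(3,3)$ where $mn = 9 = d^2$, which is at least suggestive. However, your framing of the Kronecker cross-terms in $d\phi^{(k)}_{A_0}$ as "playing the role analogous to the $H_4(1)$ and $\Gamma_3$ blocks" is a metaphor rather than an argument — without a congruence-like orbit classification on tensors those cross-terms have no structural interpretation, and nothing in the paper (or in the cited references) gives you a handle on them. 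In short: you have correctly reproduced the authors' assessment of why this is open, not closed the gap.
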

We provide computational evidence for $d = k=3$ in \Cref{fig:level3dim}.

\begin{figure}[h]
    \centering
    \begin{tabular}{c|ccccc}
   $m \!\setminus \!n$ & 1 & 2 & 3 & 4 & 5 \\ \hline
                 1  & 3 & 6 & 9 & 12 & 14 \\
                 2  & 6 & 12 & 18 & 24 & 27 \\
                 3  & 9 & 18 & 27 &  &  \\
                 4  & 12 & 24 &  &  & \\
                 5  & 14 & 27  &  &  & \\
    \end{tabular}
    \caption{Dimensions of the Zariski closure of third level signature tensors of piecewise bilinear membranes of order $(m,n)$ in $\mathbb R^3$, where we omit the values once the full dimension has been reached.}
    \label{fig:level3dim}
\end{figure}

Already in the path framework, a signature matrix is not enough to recover the path, see \cite[Example 6.8]{bib:AFS2019}. In particular, we can not recover a membrane from its signature matrix in general. An important property of the `right' notion of signature is the ability of recovering membranes from \textit{all} of its signature tensors, as it is done in \cite{Chen1954IteratedIA} for paths. It is an open question whether our signature tensors have this property.  
\begin{question}
    Can we always recover membranes from its signature tensors? 
\end{question}
One can also ask the following more special question:
\begin{question}\label{q:recoveryProblem}
    Can piecewise bilinear membranes of low order already be recovered from a finite set of signature tensors?
\end{question}
Again, this is true for paths, see \cite{bib:PSS2019}.\par
We also remark that recovery with the two-parameter \textit{sums signature} is always possible. For this compare \cite[Theorem 2.20]{DS22} and its proof.

\subsection*{Acknowledgements}
We thank Joscha Diehl for detailed discussions on  various notions of two-parameter signatures, Sven Gro{\ss} for introducing us to the interpolation literature,  Rosa Prei{\ss} for pointing out \cite[Example 4.2]{diehl2024signature}, and Ben Hollering and Bernd Sturmfels  for their help with \Cref{ex:bensPolyWith100Terms,ex:berndspoly}.

\subsection*{Funding sources}
The authors acknowledge support from DFG CRC/TRR 388 ``Rough Analysis, Stochastic Dynamics and Related Fields'', Project A04.

\printbibliography

\end{document}